\documentclass[10pt]{article}%
\usepackage{makeidx}
\usepackage{amsfonts}
\usepackage{graphicx}
\usepackage{amsmath}
\usepackage{amssymb}%
\setcounter{MaxMatrixCols}{30}
\providecommand{\U}[1]{\protect\rule{.1in}{.1in}}
\newtheorem{theorem}{Theorem}

\newtheorem{corollary}[theorem]{Corollary}

\newtheorem{definition}[theorem]{Definition}

\newtheorem{lemma}[theorem]{Lemma}

\newtheorem{proposition}[theorem]{Proposition}
\newtheorem{remark}[theorem]{Remark}

\newenvironment{proof}[1][Proof]{\textbf{#1.} }{\ \rule{0.5em}{0.5em}}
\begin{document}

\title{Error bounds for interpolation with piecewise exponential splines
of order two and four}
\author{O. Kounchev, H. Render}
\maketitle

\begin{abstract}
Explicit pointwise error bounds for the interpolation of a smooth function by
piecewise exponential splines of order four are given. Estimates known for
cubic splines are extended to a natural class of piecewise exponential splines
which are appearing in the construction of multivariate polysplines. The error
estimates are derived in an inductive way using error estimates for the
interpolation of a smooth function by exponential splines of order two.

\textbf{Key words}: Exponential splines, Interpolation $L-$splines, Approximation rate, Error estimate

\textbf{2010 Mathematics Subject Classification}: 41A05, 65D07, 31B30
\medskip\textbf{Acknowledgement:} Both authors were supported by grant DH
02-13 and grant KP-06-N32-8 with Bulgarian NSF, while the first was partially
supported by the Alexander von Humboldt Foundation, Bonn.

\end{abstract}

Let $C^{m}\left[  a,b\right]  $ be the space of all $m$ times continuously
differentiable functions $f:\left[  a,b\right]  \rightarrow\mathbb{C}$ on the
closed interval $\left[  a,b\right]  $. A function $g:\left[  t_{1}%
,t_{n}\right]  \rightarrow\mathbb{C}$ is an \emph{exponential spline}%
\footnote{We follow here the terminology in \cite[p. 405]{Sc81}. In
\cite{McCa90}, \cite{McCa91} tensions splines with varying parameters are
called exponential splines. This terminology seems to be misleading.}
for the knots $t_{1}<...<t_{n}$ and $\Lambda=(\lambda_{0},....,\lambda_{N}%
)\in\mathbb{C}^{N+1}$ if $g\in C^{N-1}\left[  t_{1},t_{n}\right]  $ and the
restriction of $g$ to each open interval $\left(  t_{j},t_{j+1}\right)  $ is a
solution of the differential equation $L_{\left(  \lambda_{0},...,\lambda
_{N}\right)  }\left(  g\right)  =0$ where
\begin{equation}
L_{\left(  \lambda_{0},...,\lambda_{N}\right)  }=\prod_{j=0}^{N}\left(
\frac{d}{dx}-\lambda_{j}\right)  . \label{neuLL}%
\end{equation}
Exponential splines are used in many applications, e.g. in signal processing
\cite{Unser2005}, in non-parametric regression \cite{GreenSilverman},
\cite{RamsaySilverman}, in statistical modelling and smoothing of big data,
see \cite{Gu}, \cite{HastieTibshiraniFriedman}, \cite{Wahba1990}, and for
approximating solutions of partial differential equations (see \cite{Moha14}
and its references). The choice of the differential operator in (\ref{neuLL})
depends on the specific aspects of the underlying problem. The motivation for
this paper came from the effort to provide error estimates for polysplines of
order 4, \cite{KounchevBOOK}. Our main interest is related to the differential
operator
\begin{equation}
L_{\left(  \xi,\xi,-\xi,-\xi\right)  }=\left(  \frac{d^{2}}{dt^{2}}-\xi
^{2}\right)  ^{2}\text{ with }\xi\in\mathbb{R} \label{eqsymmetricF}%
\end{equation}
which arises naturally in the context of biharmonic functions, see
\cite{KoRe13}, \cite{KoReMathNach} and \cite{KoReInter2019}. Polysplines on
strips can be described by Fourier methods and the parameter $\xi$ in
(\ref{eqsymmetricF}) is equal to $\left\vert y\right\vert $ where
$y\in\mathbb{R}^{n}$ is arbitrary, see \cite{KounchevBOOK},
\cite{kounchevrenderappr}, \cite{kounchevrenderpams}, \cite{KoRe05},
\cite{kounchevrenderJAT}. Hence it is crucial to have error estimates which
are valid uniformly for all parameters $\xi\in\mathbb{R}.$

Let us emphasize that $L^{\infty}$- and $L^{2}$-error estimates for (more
general) interpolation $L$-splines have been discussed by several authors, see
\cite{ScSc80}, \cite{ScVa67}, \cite{Sc81}. Our aim is to achieve exact control
of the constants in the estimates which depend on the differential operator,
in particular we want to obtain error estimates in the case of operator
(\ref{eqsymmetricF}) uniformly for all $\xi\in\mathbb{R}.$

For $F\in C^{1}\left[  t_{1},t_{n}\right]  $ let us denote by $I_{4}\left(
F\right)  $ the exponential spline for the operator $L_{\left(  \lambda
_{0},\lambda_{1},\lambda_{2},\lambda_{3}\right)  }$ satisfying the conditions
\begin{align}
I_{4}\left(  F\right)  \left(  t_{j}\right)   &  =F\left(  t_{j}\right)
\text{ for }j=1,...,n,\text{ }\label{eqInterpolB1}\\
\frac{d}{dt}I_{4}\left(  F\right)  \left(  t_{1}\right)   &  =\frac{d}%
{dt}F\left(  t_{1}\right)  \text{ and }\frac{d}{dt}I_{4}\left(  F\right)
\left(  t_{n}\right)  =\frac{d}{dt}F\left(  t_{n}\right)  .
\label{eqInterpolB2}%
\end{align}
We want to determine \emph{explicit constants} $C=C_{\left(  \lambda
_{0},\lambda_{1},\lambda_{2},\lambda_{3}\right)  }$ such that for any $F\in
C^{4}\left[  t_{1},t_{n}\right]  $ and any partition $t_{1}<...<t_{n}$ the
following error estimate
\begin{equation}
\left\vert F\left(  t\right)  -I_{4}\left(  F\right)  \left(  t\right)
\right\vert \leq C\cdot\Delta^{4}\cdot\max_{x\in\left[  t_{1},t_{n}\right]
}\left\vert L_{\left(  \lambda_{0},\lambda_{1},\lambda_{2},\lambda_{3}\right)
}F\left(  x\right)  \right\vert \label{eqError}%
\end{equation}
holds for $t\in\left[  t_{1},t_{n}\right]  $ where
\begin{equation}
\Delta:=\max_{j=1,...,n-1}\left\vert t_{j+1}-t_{j}\right\vert .
\label{defDelta}%
\end{equation}
In order to a give a flavor of the results in this paper we state now one of
the major results:

\begin{theorem}
\label{MainThmIntro}For $t_{1}<...<t_{n}$ and $F\in C^{4}\left[  t_{1}%
,t_{n}\right]  $ let $I_{4}\left(  F\right)  $ be the exponential spline for
the operator $L_{\left(  \xi,\xi,-\xi,-\xi\right)  }$ satisfying
(\ref{eqInterpolB1}) and (\ref{eqInterpolB2}). Then
\begin{equation}
\max_{t\in\left[  t_{1},t_{n}\right]  }\left\vert F\left(  t\right)
-I_{4}\left(  F\right)  \left(  t\right)  \right\vert \leq\Delta^{4}\cdot
\frac{5}{64}\ \max_{\theta\in\left[  t_{1},t_{n}\right]  }\left\vert
L_{\left(  \xi,\xi,-\xi,-\xi\right)  }F\left(  \theta\right)  \right\vert .
\label{eqErrorsym}%
\end{equation}

\end{theorem}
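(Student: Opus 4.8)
The plan is to factor the operator as $L_{(\xi,\xi,-\xi,-\xi)}=L^{2}$ with $L:=\tfrac{d^{2}}{dt^{2}}-\xi^{2}$ and to reduce the order-four estimate to two applications of the order-two error estimate (which I take to be available from the earlier part of the paper, with its constant $\tfrac18$, uniformly in $\xi$), together with one genuinely new ingredient: a bound for the error of $I_{4}(F)''$ at the knots. Throughout put $e:=F-I_{4}(F)$ and $M:=\max_{[t_{1},t_{n}]}|L_{(\xi,\xi,-\xi,-\xi)}F|$.

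First I would note that since $e$ vanishes at every knot $t_{j}$, its exponential spline interpolant of order two is identically $0$, so the order-two estimate applied to $e$ (which is $C^{2}$ and piecewise smooth, as that estimate requires) gives
\[
\max_{[t_{1},t_{n}]}|e|\le \tfrac18\,\Delta^{2}\max_{[t_{1},t_{n}]}|Le|.
\]
Thus it suffices to prove $\max|Le|\le\tfrac58\Delta^{2}M$. For this I would split
\[
Le=\bigl(LF-I_{2}(LF)\bigr)+\bigl(I_{2}(LF)-L\,I_{4}(F)\bigr),
\]
where $I_{2}(LF)$ denotes the order-two exponential spline interpolating $LF\in C^{2}$ at the knots. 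The first bracket is controlled by the order-two estimate directly: $\max|LF-I_{2}(LF)|\le\tfrac18\Delta^{2}\max|L(LF)|=\tfrac18\Delta^{2}M$. For the second bracket, observe that $L\,I_{4}(F)=I_{4}(F)''-\xi^{2}I_{4}(F)$ is continuous (because $I_{4}(F)\in C^{2}$) and lies in $\ker L$ on each open subinterval, hence is itself an exponential spline of order two; so is $I_{2}(LF)$, and therefore $I_{2}(LF)-L\,I_{4}(F)$ is an exponential spline of order two. A one-line maximum principle — a solution $g$ of $g''=\xi^{2}g$ admits no positive interior local maximum and no negative interior local minimum, so the sup-norm of an order-two exponential spline equals the largest modulus among its knot values — then shows
\[
\max\bigl|I_{2}(LF)-L\,I_{4}(F)\bigr|=\max_{j}\bigl|LF(t_{j})-L\,I_{4}(F)(t_{j})\bigr|=\max_{j}\bigl|F''(t_{j})-I_{4}(F)''(t_{j})\bigr|,
\]
using $I_{4}(F)(t_{j})=F(t_{j})$.

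It therefore remains to establish the key estimate $\max_{j}\bigl|F''(t_{j})-I_{4}(F)''(t_{j})\bigr|\le\tfrac12\,\Delta^{2}M$; granting it, the two brackets add up to $\max|Le|\le\bigl(\tfrac18+\tfrac12\bigr)\Delta^{2}M=\tfrac58\Delta^{2}M$, and then $\max|e|\le\tfrac18\cdot\tfrac58\,\Delta^{4}M=\tfrac{5}{64}\Delta^{4}M$, which is (\ref{eqErrorsym}). To prove the key estimate I would parametrise $I_{4}(F)$ by its ``$L$-moments'' $\mu_{j}:=L\,I_{4}(F)(t_{j})$: on $[t_{j},t_{j+1}]$, $I_{4}(F)$ is the solution of $L\,I_{4}(F)=$ (the order-two spline with knot values $\mu_{j},\mu_{j+1}$) with boundary values $F(t_{j}),F(t_{j+1})$, which expresses $I_{4}(F)$ explicitly in terms of $F(t_{j}),F(t_{j+1}),\mu_{j},\mu_{j+1}$ and hyperbolic functions of $\xi(t_{j+1}-t_{j})$; imposing $C^{1}$-continuity at the interior knots together with the endpoint conditions (\ref{eqInterpolB2}) produces a tridiagonal linear system for $(\mu_{j})$. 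Subtracting the identities valid when $F\in\ker L_{(\xi,\xi,-\xi,-\xi)}$ turns this into a system for the errors $\varepsilon_{j}:=\mu_{j}-LF(t_{j})=F''(t_{j})-I_{4}(F)''(t_{j})$ whose right-hand side is the local consistency defect — estimated by a Peano-kernel (Taylor) argument — and whose matrix is, uniformly in $\xi$, an M-matrix with controlled row sums; a constant barrier and the discrete maximum principle then deliver the factor $\tfrac12\Delta^{2}$.

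The main obstacle is exactly this last step: the uniform-in-$\xi$ analysis of the moment system. One must verify, for every value of $\xi(t_{j+1}-t_{j})\in(0,\infty)$, both the sign pattern and the diagonal dominance of the hyperbolic-function coefficients and a matching upper bound for the consistency defect; since the relevant constant is the supremum over $\xi$ of (defect)/(row sum), checking that it never exceeds $\tfrac12\Delta^{2}$ across the whole parameter range is the delicate point, and is presumably where the (non-optimal) constant $\tfrac58$ in the reduction, and hence $\tfrac{5}{64}$, is forced. By contrast the reductions above are short and mechanical once the order-two estimate and the elementary maximum principle for $g''=\xi^{2}g$ are in hand.
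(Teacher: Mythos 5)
Your overall architecture is sound, and its first half coincides with the paper's: since $e:=F-I_{4}\left(  F\right)  $ vanishes at consecutive knots and is $C^{2}$, the order-two estimate on each $\left[  t_{j},t_{j+1}\right]  $ gives $\max\left\vert e\right\vert \leq\frac{1}{8}\Delta^{2}\max\left\vert Le\right\vert $ (this is exactly the paper's first reduction, via the constant $M_{\xi,-\xi}^{a,b}\leq\frac{1}{8}\left(  b-a\right)  ^{2}$ of Theorem \ref{ThmMOmega}), and the splitting $Le=\left(  LF-I_{2}\left(  LF\right)  \right)  +\left(  I_{2}\left(  LF\right)  -LI_{4}\left(  F\right)  \right)  $ together with the maximum principle for $g^{\prime\prime}=\xi^{2}g$ correctly reduces everything to the knot-value bound $\max_{j}\left\vert F^{\prime\prime}\left(  t_{j}\right)  -I_{4}\left(  F\right)  ^{\prime\prime}\left(  t_{j}\right)  \right\vert \leq\frac{1}{2}\Delta^{2}M$. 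But that bound is precisely where the proof is missing: you announce a moment system for $\mu_{j}=LI_{4}\left(  F\right)  \left(  t_{j}\right)  $, assert that it is, uniformly in $\xi\left(  t_{j+1}-t_{j}\right)  \in\left(  0,\infty\right)  $, an M-matrix with a consistency defect at most $\frac{1}{2}\Delta^{2}M$ times the row sums, and then concede that verifying this across the whole parameter range is "the delicate point." None of the hyperbolic-function coefficients are written down, no diagonal-dominance constant is computed, and no Peano-kernel bound for the defect is given, so the constant $\frac{1}{2}$ --- and with it the final $\frac{5}{64}$ --- is unsupported. As it stands this is a plan for the only genuinely hard step, not a proof of it.

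For comparison, the paper avoids the moment system entirely. It observes (Proposition \ref{PropOrtho}) that, thanks to the endpoint derivative conditions (\ref{eqInterpolB2}), $L\left(  F-I_{4}\left(  F\right)  \right)  $ is orthogonal in $L^{2}$ to the order-two spline space $\mathcal{H}_{n}$, i.e. $LI_{4}\left(  F\right)  =P^{\mathcal{H}_{n}}\left(  LF\right)  $ is the best $L^{2}$-approximation of $LF$ from $\mathcal{H}_{n}$; hence $\max\left\vert Le\right\vert \leq\left(  1+\left\Vert P^{\mathcal{H}_{n}}\right\Vert _{\text{op}}\right)  \left\Vert LF-I_{2}\left(  LF\right)  \right\Vert \leq5\cdot\frac{1}{8}\Delta^{2}M$, using the bound $\left\Vert P^{\mathcal{H}_{n}}\right\Vert _{\text{op}}\leq4$ of Theorem \ref{CorLebsgueSym}, whose proof (diagonal dominance of the Gram matrix of the hat functions, uniformly in $\xi$, Sections \ref{S5}--\ref{S6}) is the paper's substitute for the uniform-in-$\xi$ analysis you defer. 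Your target $\frac{1}{8}+\frac{1}{2}=\frac{5}{8}$ matches the paper's $5\cdot\frac{1}{8}=\frac{5}{8}$, so if your knot-value estimate were established the constants would agree; but the uniform-in-$\xi$ linear-algebra work cannot be skipped in either route, and in your write-up it is.
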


When we put $\xi=0$ in (\ref{eqErrorsym}) we obtain an error estimate for
cubic splines. Note that then our result provides the estimate for cubic
splines\footnote{As already mentioned in \cite{deBoor}, Hall and Meyer have
shown in \cite{HaMe76} that the best constant $C$ in (\ref{eqErrorsym}) for
cubic splines is $5/384$.} in \cite[p. 55]{deBoor} but with an additional
factor $5/4$. However, our estimate holds uniformly for all $\xi\in
\mathbb{R}.$

Our approach to the error estimates for exponential splines of order $4$ is
inspired by the elegant exposition of Carl de Boor in \cite{deBoor} of the
error estimates for interpolation cubic splines. It is shown there that this
error can be estimated in two steps by using two simpler error estimates: the
pointwise error estimate for \emph{interpolation} \emph{with continuous
piecewise linear functions} for the knots $t_{1}<...<t_{n}$ (which is rather
easy) and the pointwise error estimate of \emph{best }$L^{2}$%
\emph{-approximation by continuous piecewise linear functions} for
$t_{1}<...<t_{n} $ (which is more delicate). For the latter estimate diagonal
dominance of certain matrices is an important tool, and, according to \cite[p.
35]{deBoor}, this technique in spline analysis was first used in
\cite{ShMe66}. In the case of exponential splines we shall follow the same
ideas and prove generalizations of every step. According to our knowledge this
is a new contribution in the literature. In any case, the computational
aspects are lot more challenging than in the polynomial case.

Let us now outline the structure and the main results of the paper: in section
$2$ we introduce at first the concept of generalized hat functions. In section
$3$ exponential splines of order $2$ for the differential operator $L_{\left(
\lambda_{0},\lambda_{1}\right)  }$ are discussed which interpolate a function
$f$ at the points $t_{1},...,t_{n}$ (the analog of interpolating linear
splines). This interpolation exponential spline of order $2$ will be denoted
by $I_{2}\left(  f\right)  $. In section $4$ we provide the following error
estimate
\[
\left\vert f\left(  t\right)  -I_{2}\left(  f\right)  \left(  t\right)
\right\vert \leq\max_{j=1,...,n-1}M_{\lambda_{0},\lambda_{1}}^{t_{j},t_{j+1}%
}\max_{\theta\in\left[  t_{1},t_{n}\right]  }\left\vert L_{\left(  \lambda
_{0},\lambda_{1}\right)  }f\left(  \theta\right)  \right\vert
\]
for all $t\in\left[  t_{1},t_{n}\right]  .$ Here the constant $M_{\lambda
_{0},\lambda_{1}}^{a,b}$ for the interval $\left[  a,b\right]  $ is defined
by
\[
M_{\lambda_{0},\lambda_{1}}^{a,b}=\max_{t\in\left[  a,b\right]  }\left\vert
\Omega_{\lambda_{0},\lambda_{1},0}^{a,b}\left(  t\right)  \right\vert ,
\]
and $\Omega_{\lambda_{0},\lambda_{1},0}^{a,b}$ is a solution of the
differential equation $L_{\left(  \lambda_{0},\lambda_{1},0\right)  }u=0$ such
that
\[
\Omega_{\lambda_{0},\lambda_{1},0}^{a,b}\left(  a\right)  =\Omega_{\lambda
_{0},\lambda_{1},0}^{a,b}\left(  b\right)  =0\text{ and }L_{\left(
\lambda_{0},\lambda_{1}\right)  }\Omega_{\lambda_{0},\lambda_{1},0}^{a,b}=-1.
\]
In section $5$ we shall compute the constant $M_{\lambda_{0},\lambda_{1}%
}^{a,b}$ for several cases. For the operator $L_{\left(  \xi,-\xi\right)  }$
we shall prove the estimate
\[
M_{\xi,-\xi}^{a,b}\leq\frac{1}{8}\left(  b-a\right)  ^{2}%
\]
for all $a<b.$ It is surprising that in the case $\lambda_{0}\leq0\leq
\lambda_{1}$ the following estimate holds:
\[
M_{\lambda_{0},\lambda_{1}}^{a,b}<\frac{1}{4}\left(  b-a\right)  ^{2}.
\]
Simple examples show that for positive $\lambda_{0},\lambda_{1}$ the numbers
$M_{\lambda_{0},\lambda_{1}}^{a,b}/\left(  b-a\right)  ^{2}$ are not bounded.

Section $5$ is devoted to an error estimate of the \emph{best }$L^{2}%
$\emph{-approximation to }$f\in C^{2}\left[  a,b\right]  $\emph{\ by
exponential splines of order} $2$ (for the operator $L_{\left(  \lambda
_{0},\lambda_{1}\right)  })$ for the knots $t_{1}<...<t_{n}.$ Let us denote by
$\mathcal{H}_{n}$ the space of exponential splines of order $2$ with respect
to $L_{\left(  \lambda_{0},\lambda_{1}\right)  }$ and for the partition
$t_{1}<...<t_{n}.$ We denote by $P^{\mathcal{H}_{n}}\left(  f\right)  $ the
best $L_{2}$-approximation to $f$ from the subspace $\mathcal{H}_{n}.$
Following the pathway provided in \cite{deBoor} (estimate of the best
approximation by hat functions and estimate of solutions of tridiagonal
matrices) one obtains explicit estimates for the operator norm $\left\Vert
P^{\mathcal{H}_{n}}\right\Vert _{\text{op}}$ (defined with respect to the
uniform norm), see Theorem \ref{ThmMain1}. For the important case $\lambda
_{0}=\xi$ and $\lambda_{1}=-\xi$ for $\xi\in\mathbb{R}$ we show that%
\[
\left\Vert P^{\mathcal{H}_{n}}\right\Vert _{\text{op}}\leq4,
\]
which is slightly larger than the constant in the polynomial case which is
$3,$ see \cite[p. 34]{deBoor}.

In section $6$ we use the results of error estimates of exponential splines of
order $2$ for a short proof of the error estimate for exponential splines for
differential operators of the type $L_{\left(  \lambda_{0},\lambda
_{1},-\lambda_{0},-\lambda_{1}\right)  }$.

In the Appendix, section $7,$ we have compiled some results about exponential
polynomials which are needed in the previous sections.

Most of the above results are valid for the so-called \emph{piecewise}
exponential splines. Let us recall that $g:\left[  t_{1},t_{n}\right]
\rightarrow\mathbb{C}$ is a \emph{piecewise exponential spline} for the knots
$t_{1}<...<t_{n}$ and variable frequencies $\left(  \lambda_{0,j}%
,....,\lambda_{N,j}\right)  $ for $j=1,...,n-1,$ if $g\in C^{N-1}\left[
t_{1},t_{n}\right]  $ and the restriction of $g$ to each interval $\left(
t_{j},t_{j+1}\right)  $ is a solution of the equation $L_{\left(
\lambda_{0,j},...,\lambda_{N,j}\right)  }\left(  g\right)  =0$ depending on
$j=1,...,n-1.$ This concept was introduced by Sp\"{a}th in \cite{Spaeth} in
the context of tension splines where different values of the tension parameter
$\rho_{j}^{2}$ could be chosen for the intervals $\left(  t_{j-1}%
,t_{j}\right)  .$ Recall that \emph{splines in tension} are exponential
splines for the differential operator
\begin{equation}
L_{\left(  0,0,\rho,-\rho\right)  }=\frac{d^{2}}{dt^{2}}\left(  \frac{d^{2}%
}{dt^{2}}-\rho^{2}\right)  , \label{eqtensionOp}%
\end{equation}
see also \cite{Prue76}, \cite{Prue78}, \cite{Prue79}. The extension of the
above results to \emph{piecewise} \emph{L-splines} does require only one
additional burden -- that for terminology and notation. Let us mention that
some results in the literature about piecewise L-splines are erroneous (e.g.
in \cite{Prenter}) as pointed out recently by Z. Ayalon, N. Dyn and D. Levin
in \cite{ADL09}.

Finally let us introduce some standard notations. We denote the space of all
\emph{exponential polynomials} or $L$\emph{-polynomials }with respect to the
differential operator $L_{\left(  \lambda_{0},\ldots,\lambda_{N}\right)  }$
by
\[
E\left(  \lambda_{0},...,\lambda_{N}\right)  =\left\{  f\in C^{N+1}\left(
\mathbb{R}\right)  :L_{\left(  \lambda_{0},\ldots,\lambda_{N}\right)
}f=0\right\}
\]
The maximum norm for $f\in C\left[  a,b\right]  $ is defined by%
\[
\left\Vert f\right\Vert _{\left[  a,b\right]  }:=\max_{t\in\left[  a,b\right]
}\left\vert f\left(  t\right)  \right\vert
\]
and the distance of $f\in C\left[  a,b\right]  $ to a subspace $U$ of
$C\left[  a,b\right]  $ is defined by
\[
\text{dist}_{C\left[  a,b\right]  }\left(  f,U\right)  =\inf\left\{
\left\Vert f-g\right\Vert _{\left[  a,b\right]  }:g\in U\right\}  .
\]

\section{\label{S2}Generalized hat functions}

Hat functions, also called chapeau functions, are used in finite element
methods. It is well known that hat functions provide a basis of the space of
linear splines, cf. \cite{deBoor}. From the viewpoint of spline analysis hat
functions are just linear splines with minimal support, or briefly, linear B-splines.

Now we shall generalize this concept, see Definition \ref{DefHat} below. When
$\varphi_{j}\left(  t\right)  =t$ for $t\in\mathbb{R}$ and $j=1,...,n-1, $ we
obtain the definition of hat functions $H_{1},....,H_{n}$ as given in \cite[p.
32]{deBoor}. In many applications we take the same function $\varphi
=\varphi_{j}$ for $j=1,...,n-1$. Our definition allows us to develop error
estimates for the case of \emph{piecewise} exponential splines.

\begin{definition}
\label{DefHat}Let $\delta>0$, $n\geq3$ and let $\varphi_{j}:\left[
-\delta,\delta\right]  \rightarrow\mathbb{C}$ be continuous strictly
increasing functions with $\varphi_{j}\left(  0\right)  =0$ for $j=1,...,n-1.$
Let the points $t_{1}<...<t_{n}$ be given such that $t_{j}-t_{j-1}\leq\delta$
for all $j=2,...,n.$ Define for $j=2,...,n-1$ the hat function $H_{j}\left(
t\right)  $ with support in $\left[  t_{j-1},t_{j+1}\right]  $ by%
\[
H_{j}\left(  t\right)  =\left\{
\begin{array}
[c]{ccc}%
\frac{\varphi_{j-1}\left(  t-t_{j-1}\right)  }{\varphi_{j-1}\left(
t_{j}-t_{j-1}\right)  } &  & \text{for }t\in\left[  t_{j-1},t_{j}\right] \\
\frac{\varphi_{j}\left(  t-t_{j+1}\right)  }{\varphi_{j}\left(  t_{j}%
-t_{j+1}\right)  } &  & \text{for }t\in\left[  t_{j},t_{j+1}\right]  .
\end{array}
\right.
\]
Further for $j=1$ and $j=n$ we define
\begin{align*}
H_{1}\left(  t\right)   &  =\frac{\varphi_{1}\left(  t-t_{2}\right)  }%
{\varphi_{1}\left(  t_{1}-t_{2}\right)  }\text{ for }t\in\left[  t_{1}%
,t_{2}\right]  ,\\
H_{n}\left(  t\right)   &  =\frac{\varphi_{n-1}\left(  t-t_{n-1}\right)
}{\varphi_{n-1}\left(  t_{n}-t_{n-1}\right)  }\text{ for }t\in\left[
t_{n-1},t_{n}\right]  ,
\end{align*}
and zero elsewhere.
\end{definition}

Obviously $H_{1},...,H_{n}$ are non-negative and linearly independent. Given a
function $f\in C\left[  t_{1},t_{n}\right]  $ we define the \emph{interpolant
with respect to the hat functions} $H_{1},...,H_{n}$ as
\[
I_{2}\left(  f\right)  =\sum_{j=1}^{n}f\left(  t_{j}\right)  H_{j}.
\]
Note that $I_{2}\left(  f\right)  \left(  t_{j}\right)  =f\left(
t_{j}\right)  $ for $j=1,...,n,$ so $I_{2}\left(  f\right)  $ interpolates $f$
at the points $t_{1},...,t_{n}.$

\begin{proposition}
\label{PropHatBasics}Assume that $\varphi_{j}$ are strictly increasing
functions on $\left[  -\delta,\delta\right]  $ such that $\varphi_{j}\left(
0\right)  =0$ for each $j=1,...,n-1,$ and $\max_{j=2,...,n}\left(
t_{j}-t_{j-1}\right)  \leq\delta.$ Then
\begin{equation}
\sum_{j=1}^{n}\left\vert H_{j}\left(  t\right)  \right\vert \leq2\text{ and
}\left\vert I_{2}\left(  f\right)  \left(  t\right)  \right\vert
\leq2\left\Vert f\right\Vert _{\left[  t_{1},t_{n}\right]  } \label{eqestHj}%
\end{equation}
for $f\in C\left[  t_{1},t_{n}\right]  $ and $t\in\left[  t_{1},t_{n}\right]
.$ If $U_{n}$ denotes the linear space generated by $H_{1},...,H_{n}$ then
\begin{equation}
\text{dist}_{C\left[  a,b\right]  }\left(  f,U_{n}\right)  \leq\left\Vert
f-I_{2}\left(  f\right)  \right\Vert _{\left[  t_{1},t_{n}\right]  }\leq
3\cdot\text{dist}_{C\left[  a,b\right]  }\left(  f,U_{n}\right)
\label{bestapprox}%
\end{equation}

\end{proposition}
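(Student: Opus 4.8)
The plan is to prove the two displays in Proposition~\ref{PropHatBasics} separately, with the first (the bound on $\sum_j |H_j|$ and its immediate consequence for $I_2(f)$) being elementary, and the second (the equivalence of $\|f-I_2(f)\|$ with the distance to $U_n$) being the substantive part.

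\textbf{Step 1: the partition-of-unity-type bound.} Each $H_j$ is non-negative by construction, since each $\varphi_j$ is strictly increasing with $\varphi_j(0)=0$: on $[t_{j-1},t_j]$ the argument $t-t_{j-1}\in[0,t_j-t_{j-1}]$ so numerator and denominator of the first branch have the same sign, giving a ratio in $[0,1]$; similarly on $[t_j,t_{j+1}]$ the argument $t-t_{j+1}$ and $t_j-t_{j+1}$ are both negative, so again the ratio lies in $[0,1]$. Hence $0\le H_j(t)\le 1$ for all $t$. Now fix $t\in[t_1,t_n]$; it lies in some interval $[t_k,t_{k+1}]$, and on that interval only $H_k$ and $H_{k+1}$ are supported (for $j=1$ or $j=n$ only one of them). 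Therefore $\sum_{j=1}^n |H_j(t)| = H_k(t)+H_{k+1}(t)\le 1+1=2$. For the interpolant, $|I_2(f)(t)|=\bigl|\sum_j f(t_j)H_j(t)\bigr|\le \|f\|_{[t_1,t_n]}\sum_j H_j(t)\le 2\|f\|_{[t_1,t_n]}$, which is (\ref{eqestHj}).

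\textbf{Step 2: the best-approximation equivalence.} The left inequality in (\ref{bestapprox}) is immediate since $I_2(f)\in U_n$, so $\mathrm{dist}(f,U_n)\le\|f-I_2(f)\|$. For the right inequality, let $g\in U_n$ be arbitrary. Since every element of $U_n$ is a linear combination of the $H_j$ and these are a basis, and since $H_j(t_i)=\delta_{ij}$, the interpolation operator $I_2$ reproduces $U_n$: $I_2(g)=g$. Hence
\begin{equation}
f-I_2(f) = (f-g) - I_2(f-g) = (f-g) - (I_2 f - g),
\end{equation}
so $\|f-I_2(f)\|\le \|f-g\| + \|I_2(f-g)\| \le \|f-g\| + 2\|f-g\| = 3\|f-g\|$, using the bound (\ref{eqestHj}) just established applied to the function $f-g$. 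Taking the infimum over $g\in U_n$ gives $\|f-I_2(f)\|\le 3\,\mathrm{dist}_{C[a,b]}(f,U_n)$, completing (\ref{bestapprox}).

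\textbf{Main obstacle.} There is no serious obstacle here; the only point requiring care is verifying the sign/monotonicity claim so that $0\le H_j\le 1$ (hence $|H_j|=H_j$ and the local two-term overlap is exactly what bounds the sum by $2$), and then noting that $I_2$ is a projection onto $U_n$ with operator norm at most $2$ — the classical Lebesgue-constant argument. One should also remark that $[a,b]$ in the statement is $[t_1,t_n]$, so the notation is consistent. The generality of allowing distinct $\varphi_j$ on distinct subintervals costs nothing: the estimate is purely local on each $[t_k,t_{k+1}]$ and uses only $\varphi_k(0)=0$ together with strict monotonicity.
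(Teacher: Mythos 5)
Your proposal is correct and follows essentially the same route as the paper: establish $0\le H_j\le 1$ from monotonicity of the $\varphi_j$, use the two-term overlap of supports to get the bound $2$, and then combine the reproduction property $I_2(g)=g$ for $g\in U_n$ with the triangle inequality to obtain the factor $3$. The only cosmetic point is that in Step 1 "same sign" alone gives non-negativity but not $H_j\le 1$; you need (and implicitly use) that monotonicity also forces the numerator to be dominated in absolute value by the denominator.
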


\begin{proof}
For $j=2,...,n$ it is obvious that $H_{j}\left(  t\right)  =\frac
{\varphi_{j-1}\left(  t-t_{j-1}\right)  }{\varphi_{j-1}\left(  t_{j}%
-t_{j-1}\right)  }$ is increasing on $\left[  t_{j-1},t_{j}\right]  .$ Note
that $\varphi_{j}\left(  t_{j}-t_{j+1}\right)  $ is negative and
$t\longmapsto\varphi_{j}\left(  t-t_{j+1}\right)  $ is increasing, hence
$H_{j}\left(  t\right)  =\frac{\varphi_{j}\left(  t-t_{j+1}\right)  }%
{\varphi_{j}\left(  t_{j}-t_{j+1}\right)  }$ is decreasing on $\left[
t_{j},t_{j+1}\right]  $ for $j=1,...n-1.$ It is now easy to see that%
\[
0\leq H_{j}\left(  t\right)  \leq1\text{ for all }t\in\left[  t_{1}%
,t_{n}\right]
\]
and $j=1,...,n.$ Since each $H_{j}$ has support in $\left[  t_{j-1}%
,t_{j+1}\right]  $ the statement in (\ref{eqestHj}) is now obvious. Note that
the first inequality in (\ref{bestapprox}) is trivial. For $g\in U_{n}$ we
have $g=I_{2}\left(  g\right)  $ and
\[
\left\Vert f-I_{2}\left(  f\right)  \right\Vert _{\left[  t_{1},t_{n}\right]
}\leq\left\Vert f-g\right\Vert _{\left[  t_{1},t_{n}\right]  }+\left\Vert
I_{2}\left(  g-f\right)  \right\Vert _{\left[  t_{1},t_{n}\right]  }%
\leq3\left\Vert f-g\right\Vert _{\left[  t_{1},t_{n}\right]  }.
\]
Taking the infimum over all $g\in U_{n}\mathcal{\ }$gives the result.
\end{proof}

\section{\label{S3}Piecewise exponential splines of order $2$ and generalized
hat functions}

For arbitrary complex numbers $\lambda_{0},\lambda_{1}$ we define
$\Phi_{\left(  \lambda_{0},\lambda_{1}\right)  }$ to be the unique exponential
polynomial in $E\left(  \lambda_{0},\lambda_{1}\right)  $ satisfying
$\Phi_{\left(  \lambda_{0},\lambda_{1}\right)  }\left(  0\right)  =0$ and
$\Phi_{\left(  \lambda_{0},\lambda_{1}\right)  }^{\prime}\left(  0\right)
=1$. We call $\Phi_{\left(  \lambda_{0},\lambda_{1}\right)  }$ the fundamental
function for $\left(  \lambda_{0},\lambda_{1}\right)  $, see the appendix for
a general discussion. For $\lambda_{0}\neq\lambda_{1}$ the simple formula
\[
\Phi_{\left(  \lambda_{0},\lambda_{1}\right)  }\left(  t\right)
=\frac{e^{\lambda_{1}t}-e^{\lambda_{0}t}}{\lambda_{1}-\lambda_{0}}=e^{\left(
\lambda_{0}+\lambda_{1}\right)  t/2}\frac{e^{\left(  \lambda_{1}-\lambda
_{0}\right)  t/2}-e^{-\left(  \lambda_{1}-\lambda_{0}\right)  t/2}}%
{\lambda_{1}-\lambda_{0}}.
\]
holds. In the case that $\lambda_{0}=\lambda_{1}\neq0$ we define
$\Phi_{\left(  \lambda_{0},\lambda_{0}\right)  }\left(  t\right)
=e^{\lambda_{0}t}\frac{t}{\lambda_{0}}.$ In the case $\lambda_{0}=\lambda
_{1}=0$ we just have $\Phi_{\left(  0,0\right)  }\left(  t\right)  =t.$ In any
of these three cases there exists an odd function $\psi_{\left(  \lambda
_{0},\lambda_{1}\right)  }$ such that $\Phi_{\left(  \lambda_{0},\lambda
_{1}\right)  }\left(  t\right)  =e^{\left(  \lambda_{0}+\lambda_{1}\right)
t/2}\psi_{\left(  \lambda_{0},\lambda_{1}\right)  }\left(  t\right)  ,$ and
this leads to the following useful formula:
\begin{equation}
\Phi_{\left(  \lambda_{0},\lambda_{1}\right)  }\left(  -t\right)
=-e^{-\left(  \lambda_{0}+\lambda_{1}\right)  t}\Phi_{\left(  \lambda
_{0},\lambda_{1}\right)  }\left(  t\right)  . \label{eqsym}%
\end{equation}
For simplicity we shall assume in this paper that $\lambda_{0},\lambda_{1}$
are real numbers although it might be interesting for applications to include
exponential polynomials with complex frequencies (see e.g. \cite{Unser2005}):
if $\lambda_{0}=i\alpha$ and $\lambda_{1}=-i\alpha$ for $\alpha>0$ then
\[
\Phi_{\left(  i\alpha,-i\alpha\right)  }\left(  t\right)  =\frac{1}{\alpha
}\sin\alpha t
\]
is real-valued and strictly increasing on $\left[  -\frac{\pi}{2\alpha}%
,\frac{\pi}{2\alpha}\right]  .$

The following example on Figure 1 shows that the function $\Phi_{\left(  \lambda
_{0},\lambda_{1}\right)  }$ is not always increasing on the real line even if
$\lambda_{0}$ and $\lambda_{1}$ are real:

%

\begin{figure}
[h]
\begin{center}
\includegraphics[
height=2.1423in,
width=4.1945in
]%
{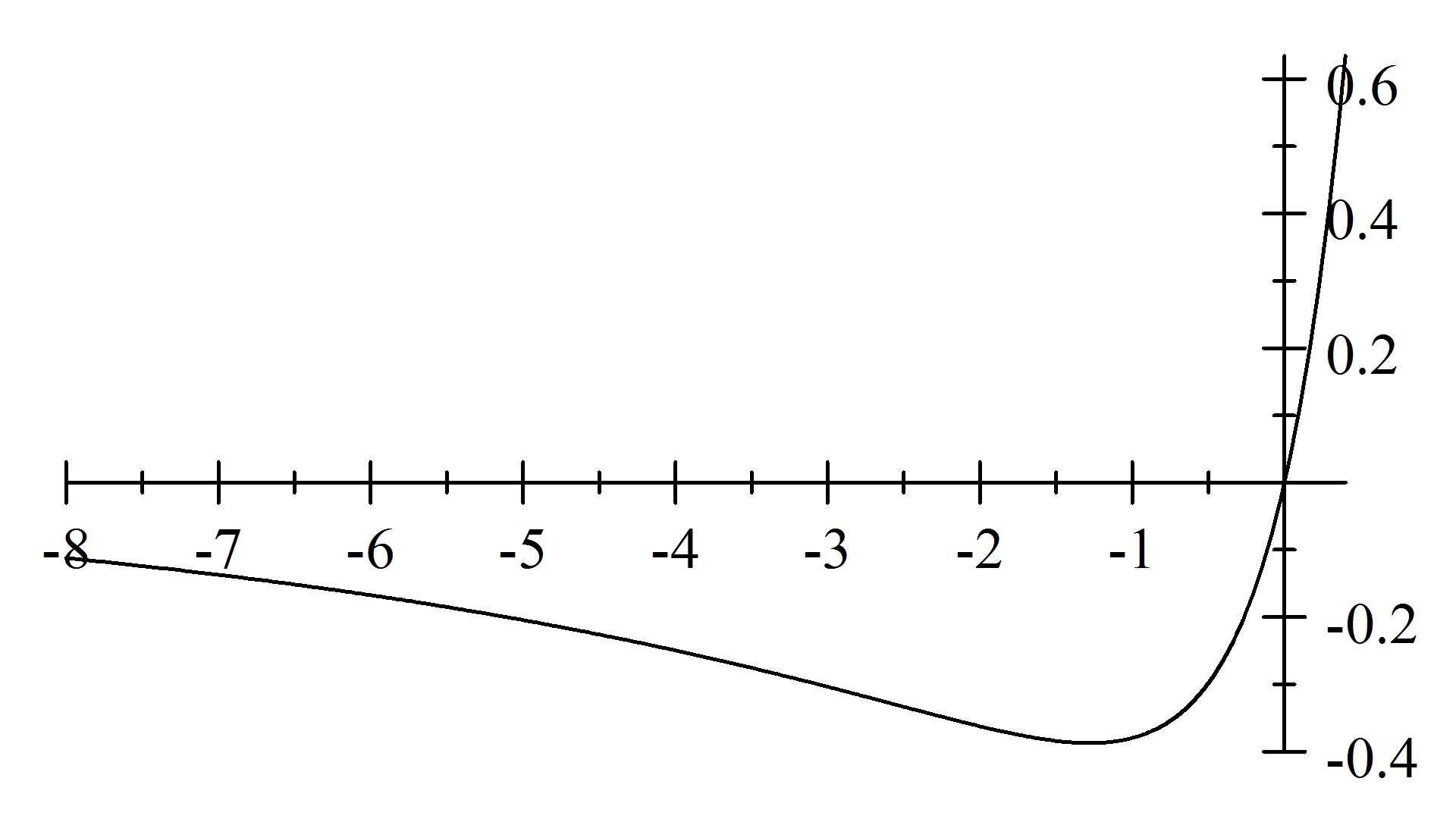}%
\caption{Graph of the fundamental function}%
\end{center}
\end{figure}


\begin{proposition}
\label{PropIncreasing}Assume that $\lambda_{0},\lambda_{1}$ are real numbers.
Then the following statements hold:

(i) If $\lambda_{0}\leq0\leq\lambda_{1}$ then $\Phi_{\left(  \lambda
_{0},\lambda_{1}\right)  }$ is strictly increasing on $\mathbb{R}.$

(ii) If $0<\lambda_{0}<\lambda_{1}$ then $\Phi_{\left(  \lambda_{0}%
,\lambda_{1}\right)  }$ is strictly increasing on $\left[  -\delta
,\delta\right]  $ with $\delta=\frac{\ln\lambda_{1}-\ln\lambda_{0}}%
{\lambda_{1}-\lambda_{0}}.$
\end{proposition}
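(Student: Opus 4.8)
The plan is to reduce both statements to an elementary sign analysis of the derivative $\Phi_{\left(\lambda_{0},\lambda_{1}\right)}^{\prime}$. First I would dispose of the degenerate case $\lambda_{0}=\lambda_{1}$: under the hypothesis of (i) this forces $\lambda_{0}=\lambda_{1}=0$, so $\Phi_{\left(0,0\right)}\left(t\right)=t$ is trivially strictly increasing on $\mathbb{R}$, while under the hypothesis $0<\lambda_{0}<\lambda_{1}$ of (ii) it simply does not occur. Hence from now on one may assume $\lambda_{0}<\lambda_{1}$ and use the explicit formula $\Phi_{\left(\lambda_{0},\lambda_{1}\right)}\left(t\right)=\dfrac{e^{\lambda_{1}t}-e^{\lambda_{0}t}}{\lambda_{1}-\lambda_{0}}$, which gives
\[
\Phi_{\left(\lambda_{0},\lambda_{1}\right)}^{\prime}\left(t\right)=\frac{\lambda_{1}e^{\lambda_{1}t}-\lambda_{0}e^{\lambda_{0}t}}{\lambda_{1}-\lambda_{0}}.
\]
Since $\lambda_{1}-\lambda_{0}>0$, everything comes down to the sign of $N\left(t\right):=\lambda_{1}e^{\lambda_{1}t}-\lambda_{0}e^{\lambda_{0}t}$.

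For part (i): because $\lambda_{1}\geq0$ and $\lambda_{0}\leq0$, we have $\lambda_{1}e^{\lambda_{1}t}\geq0$ and $-\lambda_{0}e^{\lambda_{0}t}\geq0$ for every $t$, and these two nonnegative terms cannot vanish simultaneously (that would force $\lambda_{0}=\lambda_{1}=0$, contrary to $\lambda_{0}<\lambda_{1}$). Therefore $N\left(t\right)>0$, so $\Phi_{\left(\lambda_{0},\lambda_{1}\right)}^{\prime}\left(t\right)>0$ for all $t\in\mathbb{R}$, which is the asserted strict monotonicity.

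For part (ii): now $0<\lambda_{0}<\lambda_{1}$, so dividing by $\lambda_{0}e^{\lambda_{0}t}>0$ shows that $N\left(t\right)>0$ is equivalent to $\lambda_{1}/\lambda_{0}>e^{\left(\lambda_{0}-\lambda_{1}\right)t}$; taking logarithms and dividing by $\lambda_{1}-\lambda_{0}>0$ (which reverses the inequality, since $\lambda_{0}-\lambda_{1}<0$) this is in turn equivalent to $t>-\delta$ with $\delta=\dfrac{\ln\lambda_{1}-\ln\lambda_{0}}{\lambda_{1}-\lambda_{0}}>0$. Thus $\Phi_{\left(\lambda_{0},\lambda_{1}\right)}^{\prime}\left(t\right)>0$ for $t>-\delta$ and $\Phi_{\left(\lambda_{0},\lambda_{1}\right)}^{\prime}\left(-\delta\right)=0$; in particular $\Phi_{\left(\lambda_{0},\lambda_{1}\right)}^{\prime}\geq0$ on $\left[-\delta,\delta\right]$ and is strictly positive on $\left(-\delta,\delta\right]$. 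Consequently, for $-\delta\leq s<t\leq\delta$ one has $\Phi_{\left(\lambda_{0},\lambda_{1}\right)}\left(t\right)-\Phi_{\left(\lambda_{0},\lambda_{1}\right)}\left(s\right)=\int_{s}^{t}\Phi_{\left(\lambda_{0},\lambda_{1}\right)}^{\prime}\left(u\right)\,du>0$, giving strict monotonicity on $\left[-\delta,\delta\right]$; the sign change of $N$ at $-\delta$ shows that $\delta$ is sharp.

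I do not expect a genuine obstacle here: this is essentially a one-variable calculus argument. The only points that need a little care are the degenerate equal-frequency cases at the outset and the passage from "$\Phi^{\prime}\geq0$ with $\Phi^{\prime}$ vanishing only at the single endpoint $-\delta$" to the genuinely strict monotonicity on the closed interval $\left[-\delta,\delta\right]$.
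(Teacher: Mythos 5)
Your proposal is correct and follows essentially the same route as the paper: reduce to the sign of $\Phi_{\left(\lambda_{0},\lambda_{1}\right)}^{\prime}\left(t\right)=\left(\lambda_{1}e^{\lambda_{1}t}-\lambda_{0}e^{\lambda_{0}t}\right)/\left(\lambda_{1}-\lambda_{0}\right)$, which is positive everywhere in case (i) and, in case (ii), vanishes only at the unique critical point $t_{0}=-\delta$ with positivity to its right. Your treatment is in fact slightly more careful than the paper's on the two points you flag (strict versus non-strict positivity in (i), and strict monotonicity up to the endpoint $-\delta$ in (ii)).
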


\begin{proof}
The case $\lambda_{0}=\lambda_{1}$ leads in (i) to the polynomial case
$\lambda_{0}=\lambda_{1}=0.$ For $\lambda_{0}\leq0\leq\lambda_{1}$ with
$\lambda_{0}\neq\lambda_{1}$ the statement is obvious since
\[
\frac{d}{dt}\Phi_{\left(  \lambda_{0},\lambda_{1}\right)  }\left(  t\right)
=\frac{\lambda_{1}e^{\lambda_{1}t}-\lambda_{0}e^{\lambda_{0}t}}{\lambda
_{1}-\lambda_{0}}\geq0.
\]
In the case (ii), $\Phi_{\left(  \lambda_{0},\lambda_{1}\right)  }\left(
t\right)  $ has one critical point $t_{0}$, namely
\[
e^{\left(  \lambda_{1}-\lambda_{0}\right)  t_{0}}=\frac{\lambda_{0}}%
{\lambda_{1}}\text{, so }t_{0}=\frac{\ln\lambda_{0}-\ln\lambda_{1}}%
{\lambda_{1}-\lambda_{0}}.
\]
The function $t\longmapsto\Phi_{\left(  \lambda_{0},\lambda_{1}\right)
}\left(  t\right)  $ is increasing for all $t>t_{0}$, and decreasing for all
$t<t_{0}$. Note that $\Phi_{\left(  \lambda_{0},\lambda_{1}\right)  }\left(
t\right)  \rightarrow0$ for $t\rightarrow-\infty.$
\end{proof}

\bigskip

Throughout the whole paper we make the\textbf{\ }\emph{following assumptions}
and use of notations:

\begin{enumerate}
\item[(i)] Let $\delta>0.$ We assume that for the real numbers $\lambda
_{0,j}\leq\lambda_{1,j}$ the functions%
\begin{equation}
\varphi_{j}:=\Phi_{\left(  \lambda_{0,j},\lambda_{1,j}\right)  },\quad\quad
j=1,...,n-1,\label{fi=FI}%
\end{equation}
are increasing on $\left[  -\delta,\delta\right]  .$

\item[(ii)] For given $t_{1}<...<t_{n}$ with $\left\vert t_{j+1}%
-t_{j}\right\vert \leq\delta,$ for $j=1,..,n-1$ the corresponding hat
functions are denoted by $H_{1},....,H_{n}$, and their linear span is denoted
by $\mathcal{H}_{n}$.
\end{enumerate}

The following result is now obvious:

\begin{proposition}
\label{PropDefphij} The generalized hat functions $H_{1},...,H_{n}$ form a
linear basis of the vector space $\mathcal{H}_{n}$ of all piecewise
exponential splines for the knots $t_{1}<...<t_{n}$ and the differential
operators $L_{\left(  \lambda_{0,j},\lambda_{1,j}\right)  }$ on the interval
$\left(  t_{j},t_{j+1}\right)  $ for $j=1,..,n-1.$
\end{proposition}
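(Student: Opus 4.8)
The plan is to prove two inclusions: that the linear span of $H_{1},\dots,H_{n}$ is contained in $\mathcal{H}_{n}$, and conversely that every element of $\mathcal{H}_{n}$ is a linear combination of the $H_{j}$; the linear independence of $H_{1},\dots,H_{n}$ has already been observed right after Definition \ref{DefHat}.

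For the first inclusion I would argue as follows. Each $H_{j}$ is continuous on $[t_{1},t_{n}]$ by construction, hence lies in $C^{0}[t_{1},t_{n}]=C^{N-1}[t_{1},t_{n}]$ for $N=1$. On a subinterval $(t_{k},t_{k+1})$ the restriction of $H_{j}$ is either the zero function or of the form $t\mapsto c\,\Phi_{(\lambda_{0,k},\lambda_{1,k})}(t-t_{k\pm1})$ with a constant $c$; since the solution space $E(\lambda_{0,k},\lambda_{1,k})=\ker L_{(\lambda_{0,k},\lambda_{1,k})}$ is invariant under translations and under multiplication by scalars and contains both $\Phi_{(\lambda_{0,k},\lambda_{1,k})}$ and $0$, the restriction of $H_{j}$ to $(t_{k},t_{k+1})$ solves $L_{(\lambda_{0,k},\lambda_{1,k})}u=0$. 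Thus $H_{j}\in\mathcal{H}_{n}$, and since $\mathcal{H}_{n}$ is a vector space, the linear span of $H_{1},\dots,H_{n}$ lies in $\mathcal{H}_{n}$.

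For the converse, take an arbitrary $g\in\mathcal{H}_{n}$ and set $d:=g-\sum_{j=1}^{n}g(t_{j})H_{j}=g-I_{2}(g)$. Then $d\in\mathcal{H}_{n}$, and since $H_{i}(t_{j})$ equals $1$ for $i=j$ and $0$ otherwise, $d(t_{j})=0$ for all $j=1,\dots,n$. On a fixed interval $[t_{j},t_{j+1}]$ the function $d$ lies in the two-dimensional space $E(\lambda_{0,j},\lambda_{1,j})$ and vanishes at $t_{j}$. Evaluation at $t_{j}$ is a nonzero linear functional on $E(\lambda_{0,j},\lambda_{1,j})$, because this space contains the nowhere-vanishing function $t\mapsto e^{\lambda_{0,j}t}$; hence its kernel is one-dimensional, and since $t\mapsto\Phi_{(\lambda_{0,j},\lambda_{1,j})}(t-t_{j})$ is a nonzero member of that kernel (it vanishes at $t_{j}$ and has derivative $1$ there), we obtain $d(t)=c_{j}\,\Phi_{(\lambda_{0,j},\lambda_{1,j})}(t-t_{j})$ on $[t_{j},t_{j+1}]$ for some constant $c_{j}$.

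The decisive step is then to exploit the second vanishing condition $d(t_{j+1})=0$, which gives $c_{j}\,\Phi_{(\lambda_{0,j},\lambda_{1,j})}(t_{j+1}-t_{j})=0$. By the standing assumption (i) the function $\varphi_{j}=\Phi_{(\lambda_{0,j},\lambda_{1,j})}$ is strictly increasing on $[-\delta,\delta]$ and vanishes at $0$, so $\Phi_{(\lambda_{0,j},\lambda_{1,j})}(t_{j+1}-t_{j})>0$ because $0<t_{j+1}-t_{j}\le\delta$; therefore $c_{j}=0$ and $d\equiv 0$ on $[t_{j},t_{j+1}]$. Since this holds for every $j=1,\dots,n-1$, we conclude $d\equiv 0$ on $[t_{1},t_{n}]$, i.e. $g=\sum_{j=1}^{n}g(t_{j})H_{j}$ lies in the span of $H_{1},\dots,H_{n}$. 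Together with their linear independence this proves that $H_{1},\dots,H_{n}$ is a basis of $\mathcal{H}_{n}$. I expect no serious obstacle here; the only point requiring a little care is to treat the degenerate frequencies $\lambda_{0,j}=\lambda_{1,j}$ uniformly, which is precisely why it is convenient to phrase everything through the fundamental function $\Phi_{(\lambda_{0,j},\lambda_{1,j})}$ rather than through explicit exponentials.
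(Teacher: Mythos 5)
Your proof is correct. The paper offers no argument at all here (it simply declares the result ``now obvious'' after fixing the standing assumptions (i)--(ii)), so there is nothing to compare against; your two-inclusion argument --- translation invariance of $E(\lambda_{0,k},\lambda_{1,k})$ for the forward inclusion, and the identity $g=I_{2}(g)$ for $g\in\mathcal{H}_{n}$ for the converse --- is exactly the verification the authors are suppressing. The only remark worth making is that your final step can be shortened: once you know $d=g-I_{2}(g)$ restricted to $[t_{j},t_{j+1}]$ lies in $E(\lambda_{0,j},\lambda_{1,j})$ and vanishes at both $t_{j}$ and $t_{j+1}$, Proposition \ref{PropCHebyshev} (a nonzero element of this two-dimensional extended Chebyshev space has at most one zero) already forces $d\equiv0$ there, without the explicit representation $d=c_{j}\Phi_{(\lambda_{0,j},\lambda_{1,j})}(t-t_{j})$ or the monotonicity assumption (i); your route through the strict monotonicity of $\varphi_{j}$ is equally valid and has the minor virtue of staying entirely within Section \ref{S2}--\ref{S3}.
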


The generalized hat functions $H_{1},...,H_{n}$ in Proposition
\ref{PropDefphij} are piecewise exponential splines of order $2$ with minimal
compact support, or shorter, \emph{piecewise exponential B-splines of order
}$2$. B-splines for special classes of exponential polynomials have been used
by many authors, see e.g. \cite[p. 197]{Eddy} or \cite{WaFa06}.

One basic feature of linear hat functions is the partition of unity, saying
that the expression
\[
U\left(  t\right)  :=\sum_{j=1}^{n}H_{j}\left(  t\right)
\]
is equal to the constant function $1.$ Note that $U\left(  t\right)  $ is the
piecewise exponential spline of order $2$ interpolating the constant function
$1,$ and in general this function is not constant. The following Figure $2$
shows  the three basis functions $H_{1},$ $H_{2},$ $H_{3}$ for the case
$\lambda_{1}=-\lambda_{0}$:%


\begin{figure}
[h] 
\begin{center}
\includegraphics[
height=2.1423in,
width=4.1945in
]%
{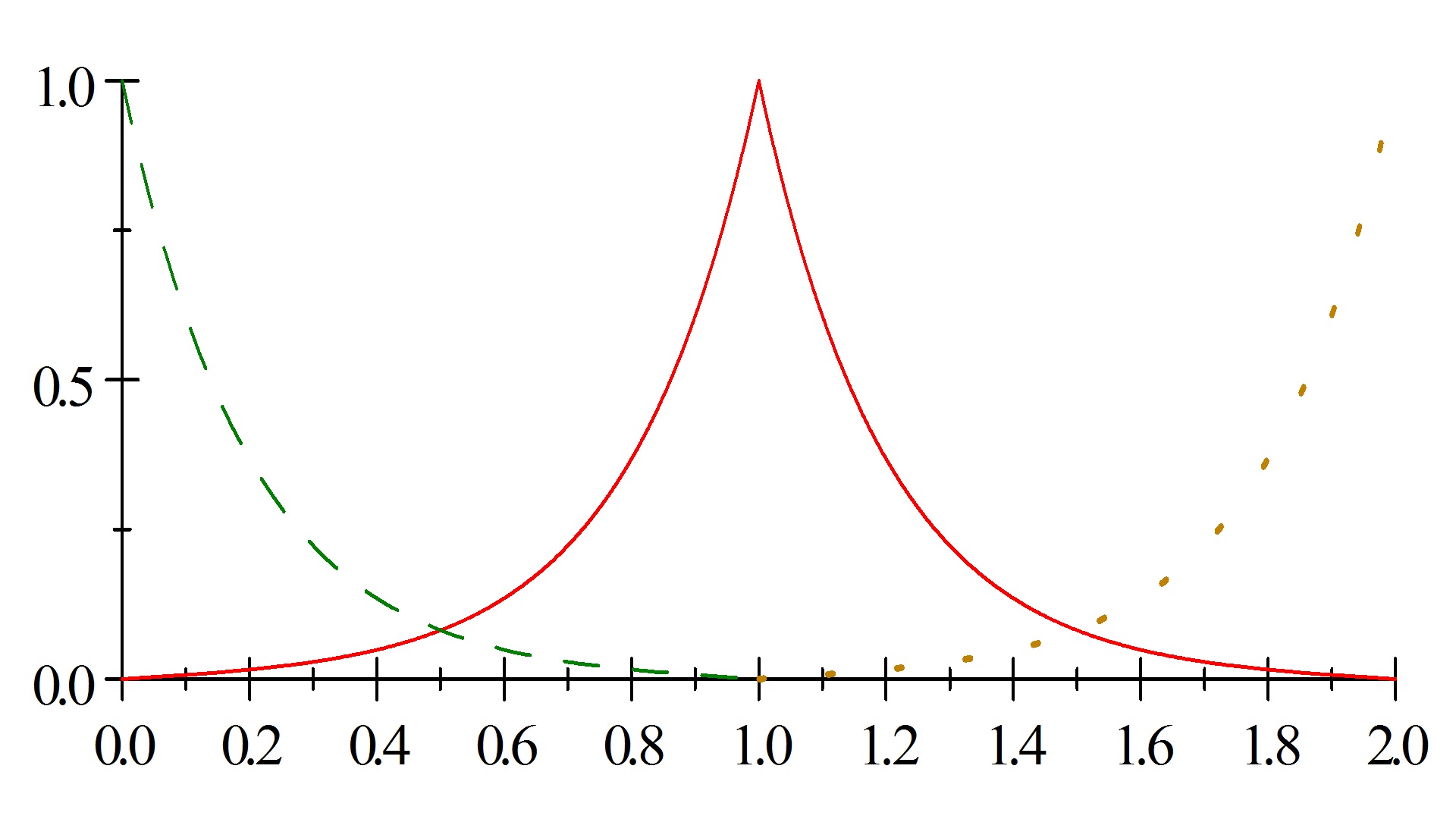}%
\caption{$H_{1},H_{2},H_{3}$ for $\lambda_{0}=-5$ and $\lambda_{1}%
=5$.}%
\end{center}
\end{figure}

On Figure $3$ we see the sum $U\left(  t\right)  $ of the above
basis functions $H_{1}$, $H_{2},$ $H_{3}$. 

%


\begin{figure}
[h] 
\begin{center}
\includegraphics[
height=2.1423in,
width=4.1945in
]%
{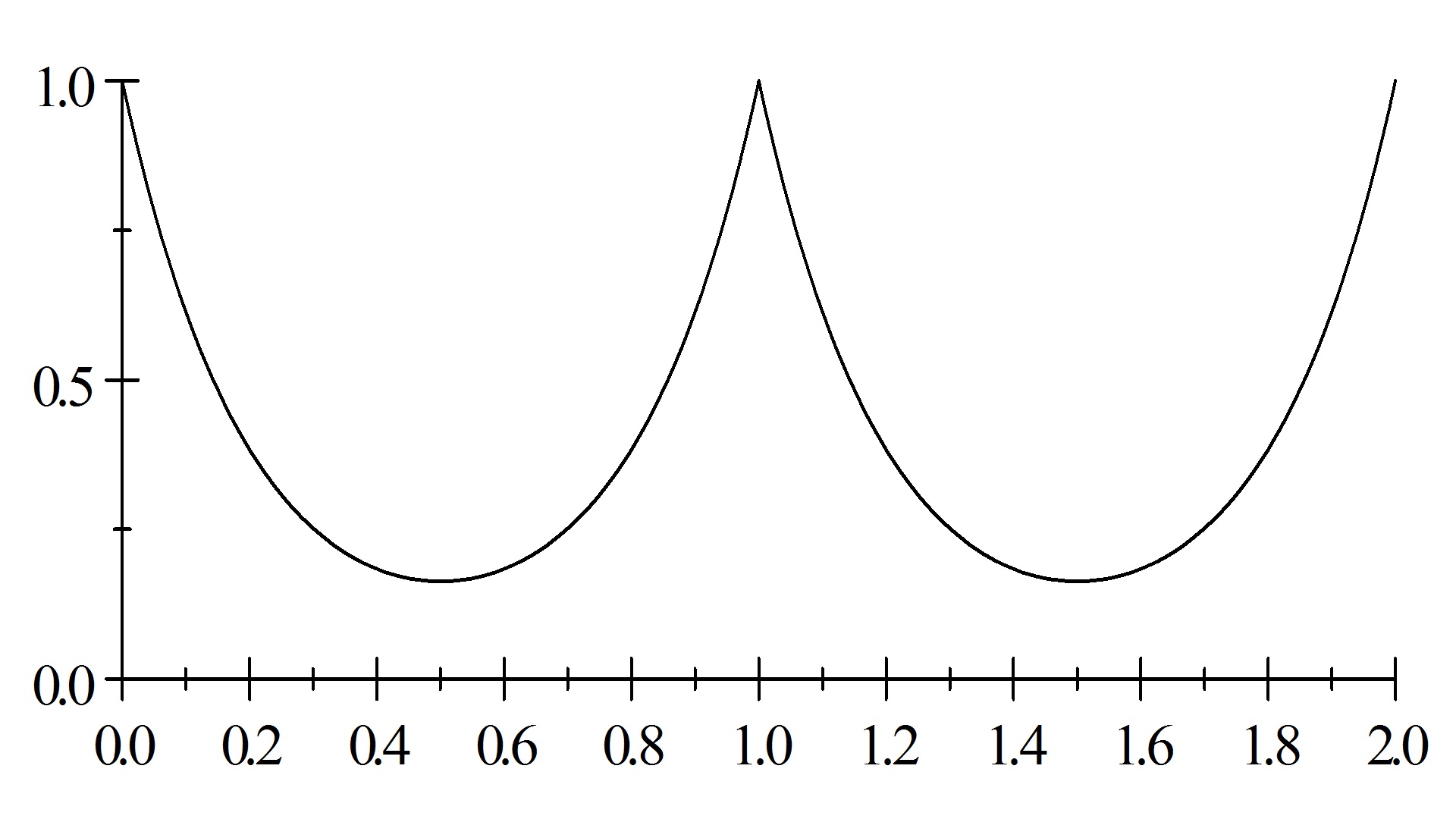}%
\caption{Sum of $H_{1},H_{2},H_{3}$ for $\lambda_{0}=-5$ and
$\lambda_{1}=5$.}%
\end{center}
\end{figure}

For positive frequencies $\lambda_{0}=0.2$ and $\lambda_{1}=2$ by Proposition
\ref{PropIncreasing} we know that $\Phi_{\left(  \lambda_{0},\lambda
_{1}\right)  }$ is increasing on $\left[  -1.2,1.2\right]  $ since $\frac
{\ln2-\ln0.2}{2-0.2}=1.279\,2.$ Figure $4$ shows that in this case the sum of
the hat functions (the upper curve) is not bounded by $1$. 

%


\begin{figure}
[h] 
\begin{center}
\includegraphics[
height=2.1423in,
width=4.1945in
]%
{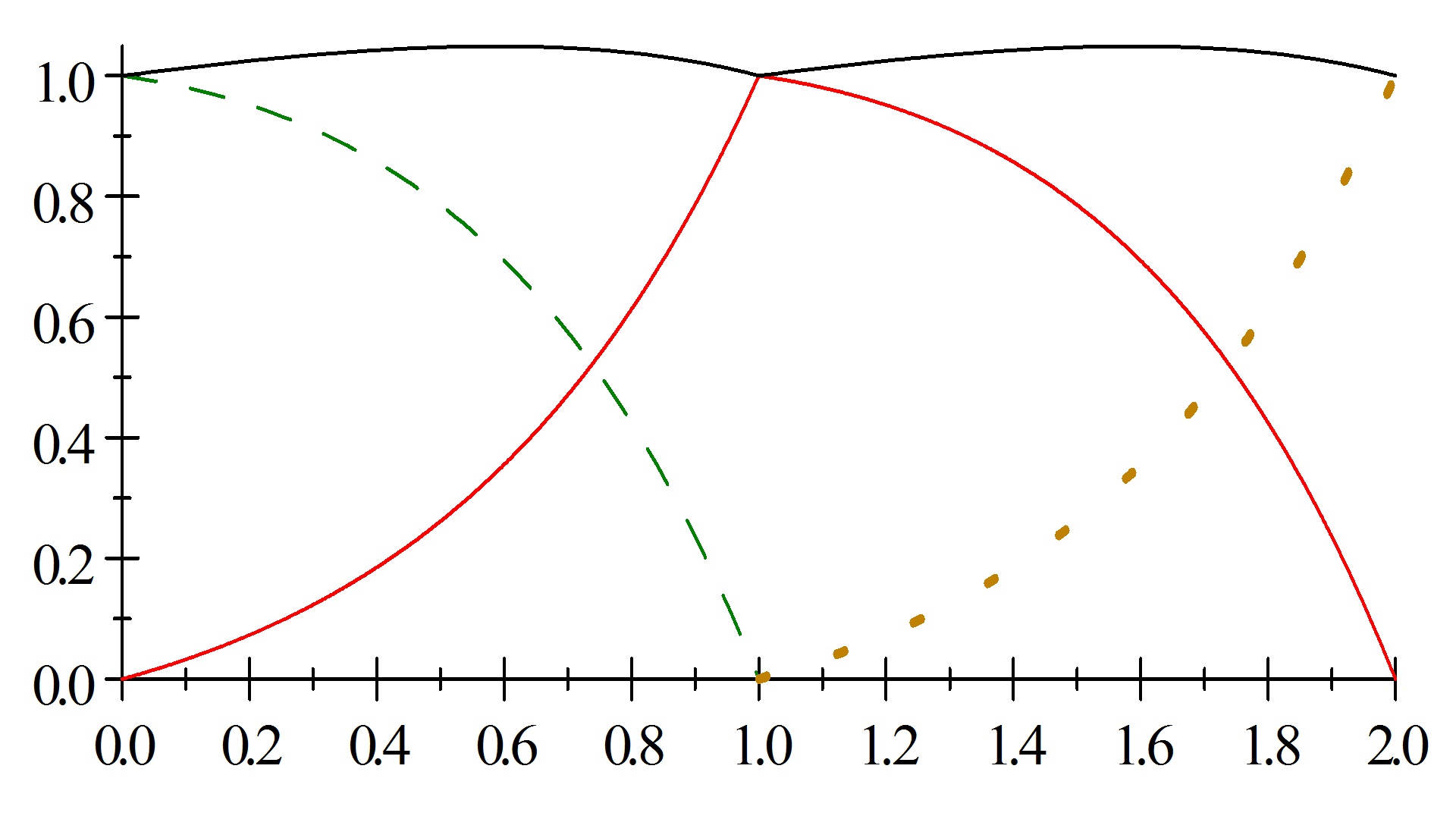}%
\caption{$H_{1},H_{2},H_{3}$ and their sum for $\lambda_{0}=0.2$,
$\lambda_{1}=2$.}%
\end{center}
\end{figure}

In (\ref{eqestHj}) we have seen that $0\leq U\left(  t\right)  \leq2.$ We show
in the next proposition that this inequality can be improved when the
frequencies $\lambda_{0,j}$ and $\lambda_{1,j}$ have different sign for each
$j=1,...,n-1$:

\begin{proposition}
\label{PropSumHat}In addition to (i)--(ii) assume that $\lambda_{0,j}\leq
0\leq\lambda_{1,j}$ for $j=1,...,n-1.$ Then the following inequality holds:
\[
0\leq\sum_{j=1}^{n}H_{j}\left(  t\right)  \leq1.
\]

\end{proposition}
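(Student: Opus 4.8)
The plan is to fix $t \in [t_1, t_n]$ and show $\sum_{j=1}^n H_j(t) \le 1$; the lower bound is already contained in Proposition \ref{PropHatBasics}. Since each $H_j$ is supported in $[t_{j-1}, t_{j+1}]$, for a fixed $t$ lying in some subinterval $[t_k, t_{k+1}]$ at most two hat functions are nonzero there, namely $H_k$ and $H_{k+1}$. So the whole claim reduces to showing, for each $j = 1, \dots, n-1$ and each $t \in [t_j, t_{j+1}]$, the two-term inequality
\[
\frac{\varphi_j(t - t_{j+1})}{\varphi_j(t_j - t_{j+1})} + \frac{\varphi_j(t - t_j)}{\varphi_j(t_{j+1} - t_j)} \le 1,
\]
where I have written out $H_j(t)$ (the right branch) and $H_{j+1}(t)$ (the left branch) using Definition \ref{DefHat}, and $\varphi_j = \Phi_{(\lambda_{0,j}, \lambda_{1,j})}$. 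For the boundary subintervals $[t_1, t_2]$ and $[t_{n-1}, t_n]$ only one term is present and it is $\le 1$ trivially, so the interior estimate is the only real content.

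To handle this, I would drop the index $j$, set $a = t_j$, $b = t_{j+1}$, $h = b - a \le \delta$, and use the symmetry relation (\ref{eqsym}), namely $\Phi_{(\lambda_0,\lambda_1)}(-t) = -e^{-(\lambda_0+\lambda_1)t}\Phi_{(\lambda_0,\lambda_1)}(t)$. Writing $\varphi = \Phi_{(\lambda_0,\lambda_1)}$ and $s = t - a \in [0, h]$, the left branch is $\varphi(s)/\varphi(h)$ and the right branch is $\varphi(s - h)/\varphi(-h) = \big(e^{-(\lambda_0+\lambda_1)(h-s)}\varphi(h-s)\big)\big/\big(e^{-(\lambda_0+\lambda_1)h}\varphi(h)\big) = e^{(\lambda_0+\lambda_1)s}\,\varphi(h - s)/\varphi(h)$. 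Thus the inequality to prove becomes
\[
\varphi(s) + e^{(\lambda_0+\lambda_1)s}\,\varphi(h - s) \le \varphi(h) \qquad \text{for all } s \in [0, h].
\]
Using the explicit formula $\varphi(t) = (e^{\lambda_1 t} - e^{\lambda_0 t})/(\lambda_1 - \lambda_0)$ (and the confluent limit when $\lambda_0 = \lambda_1$), the left-hand side expands; with $\mu = \lambda_0 + \lambda_1$ one finds that the cross terms combine so that the difference $\varphi(h) - \varphi(s) - e^{\mu s}\varphi(h-s)$ equals $\frac{1}{\lambda_1 - \lambda_0}\big(e^{\lambda_1 h} - e^{\lambda_0 h} - (e^{\lambda_1 s} - e^{\lambda_0 s}) - (e^{\lambda_1 h + \lambda_0 s} - e^{\lambda_0 h + \lambda_1 s})\big)$, which factors (after regrouping the four exponentials in pairs) as $\frac{1}{\lambda_1 - \lambda_0}(e^{\lambda_1 s} - e^{\lambda_0 s})(e^{\lambda_1(h-s)} - e^{\lambda_0(h-s)}) = (\lambda_1 - \lambda_0)\,\varphi(s)\,\varphi(h - s)$. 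Since $0 \le s \le h \le \delta$ and $\varphi$ is nonnegative on $[0,\delta]$ (it vanishes at $0$, is increasing there by hypothesis (i), and positive on $(0,\delta]$), each of $\varphi(s)$ and $\varphi(h-s)$ is $\ge 0$, and $\lambda_1 - \lambda_0 \ge 0$ by hypothesis, so the whole expression is $\ge 0$, proving the inequality. The confluent case $\lambda_0 = \lambda_1$ follows by the same computation in the limit, or directly from $\varphi(t) = t e^{\lambda_0 t}/\lambda_0$ (resp. $\varphi(t) = t$), giving the residual term $\lambda_0\, s(h-s)e^{\lambda_0 h}/\lambda_0 \cdot$ appropriate sign — still nonnegative.

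The main obstacle, I expect, is bookkeeping rather than conceptual: getting the symmetry substitution (\ref{eqsym}) applied correctly to the right branch of $H_j$, and then seeing that the four-exponential expression factors cleanly as a product $(\lambda_1 - \lambda_0)\varphi(s)\varphi(h-s)$ of manifestly nonnegative quantities. Once that factorization is spotted the proof is immediate. A minor point to check is that $\varphi(s)$ and $\varphi(h-s)$ are genuinely nonnegative (not just the derivative), which follows since $\varphi(0) = 0$ and $\varphi$ is strictly increasing on $[-\delta,\delta]$ by assumption (i), hence $\varphi \ge 0$ on $[0,\delta] \supseteq [0,h]$. One should also note the degenerate cases where $s = 0$ or $s = h$ reduce to $H_j(t_j) = 1$ or $H_{j+1}(t_{j+1}) = 1$, consistent with the bound.
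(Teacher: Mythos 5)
Your reduction to the two\,-term inequality $H_j(t)+H_{j+1}(t)\le 1$ on $[t_j,t_{j+1}]$ and your rewriting of it, via (\ref{eqsym}), as
\[
\varphi(s)+e^{(\lambda_0+\lambda_1)s}\,\varphi(h-s)\le\varphi(h),\qquad s=t-t_j\in[0,h],
\]
are both correct and match the setup of the paper's proof. The gap is the claimed factorization: the identity
\[
\varphi(h)-\varphi(s)-e^{(\lambda_0+\lambda_1)s}\varphi(h-s)=(\lambda_1-\lambda_0)\,\varphi(s)\,\varphi(h-s)
\]
is false. Take $\lambda_0=-1$, $\lambda_1=1$, $h=1$, $s=1/2$, so $\varphi=\sinh$ and $\lambda_0+\lambda_1=0$: the left side is $\sinh 1-2\sinh(1/2)\approx 0.133$, while the right side is $2\sinh^2(1/2)=\cosh 1-1\approx 0.543$. (One can also see it cannot be an identity: your six\,-exponential expression contains the lone terms $e^{\lambda_1 s}$ and $e^{\lambda_0 s}$, which do not occur in the expansion of the product.) There is a second, structural, warning sign: your argument uses only $\lambda_1-\lambda_0\ge0$ and the positivity of $\varphi$ on $[0,\delta]$, both of which already follow from assumption (i) alone, e.g.\ for $0<\lambda_0<\lambda_1$ with small enough mesh. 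But the paper's Figure 4 (with $\lambda_0=0.2$, $\lambda_1=2$) shows that under (i)--(ii) without the sign condition the sum of the hat functions can exceed $1$. So any correct proof must use $\lambda_0\le 0\le\lambda_1$ in an essential way, and yours does not.

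For comparison, the paper's proof works with $f=H_j+H_{j+1}$ on $[t_j,t_{j+1}]$, notes $f(t_j)=f(t_{j+1})=1$, and uses that $f'\in E(\lambda_0,\lambda_1)$ has at most one zero (Proposition \ref{PropCHebyshev}), so $f$ cannot have both an interior maximum and an interior minimum; it then suffices to show $f'(t_j)<0$, which reduces to proving $g(h):=1-e^{(\lambda_0+\lambda_1)h}\Phi'_{(\lambda_0,\lambda_1)}(-h)<0$ for $h>0$. This is where the hypothesis enters: $g(0)=0$ and $g'(h)=\lambda_0\lambda_1\Phi_{(\lambda_0,\lambda_1)}(h)\le 0$ precisely because $\lambda_0\lambda_1\le0$. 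If you want to keep your direct route, you would need to prove $G(s)=\varphi(h)-\varphi(s)-e^{(\lambda_0+\lambda_1)s}\varphi(h-s)\ge0$ by some genuine argument (note $G(0)=G(h)=0$ and $G'(0)=-g(h)$, so you end up facing the same inequality the paper proves), not by the false product formula.
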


\begin{proof}
It suffices to show that $H_{j}\left(  t\right)  +H_{j+1}\left(  t\right)
\leq1 $ for $t\in\left[  t_{j},t_{j+1}\right]  $ and for $j=1,...,n-1.$ For
$t\in\left[  t_{j},t_{j+1}\right]  $ we have
\[
f\left(  t\right)  :=H_{j}\left(  t\right)  +H_{j+1}\left(  t\right)
=\frac{\varphi_{j}\left(  t-t_{j+1}\right)  }{\varphi_{j}\left(  t_{j}%
-t_{j+1}\right)  }+\frac{\varphi_{j}\left(  t-t_{j}\right)  }{\varphi
_{j}\left(  t_{j+1}-t_{j}\right)  }.
\]
It follows that $f\left(  t_{j}\right)  =f\left(  t_{j+1}\right)  =1$ and
$f\neq0.$ Note that $f$ can not have a local maximum \emph{and} a local
minimum since otherwise the derivative $f^{\prime}$ would have two different
zeros which is a contradiction to the fact that $f^{\prime}\in E\left(
\lambda_{0},\lambda_{1}\right)  $ has at most one zero, see Proposition
\ref{PropCHebyshev} in the Appendix. If we show that $f^{\prime}\left(
t_{j}\right)  <0$ then $f$ will have a local minimum, and not a local maximum,
so $f\left(  t\right)  \leq1$ for $t\in\left[  t_{j},t_{j+1}\right]  , $ and
the proof will be complete.

Writing $\lambda_{0}$ and $\lambda_{1}$ instead of $\lambda_{0,j}$ and
$\lambda_{1,j}$, and $t_{j}=a$ and $t_{j+1}=b,$ by our basic assumption
(\ref{fi=FI}) we have $\varphi_{j}=\Phi_{\left(  \lambda_{0,j},\lambda
_{1,j}\right)  }$, and after putting $h:=t_{j+1}-t_{j},$ we obtain the
equality
\[
f^{\prime}\left(  a\right)  =\frac{\Phi_{\left(  \lambda_{0},\lambda
_{1}\right)  }^{\prime}\left(  -h\right)  }{\Phi_{\left(  \lambda_{0}%
,\lambda_{1}\right)  }\left(  -h\right)  }+\frac{1}{\Phi_{\left(  \lambda
_{0},\lambda_{1}\right)  }\left(  h\right)  }=-e^{\left(  \lambda_{0}%
+\lambda_{1}\right)  h}\frac{\Phi_{\left(  \lambda_{0},\lambda_{1}\right)
}^{\prime}\left(  -h\right)  }{\Phi_{\left(  \lambda_{0},\lambda_{1}\right)
}\left(  h\right)  }+\frac{1}{\Phi_{\left(  \lambda_{0},\lambda_{1}\right)
}\left(  h\right)  };
\]
we have used the fact that $\Phi_{\left(  \lambda_{0},\lambda_{1}\right)
}^{\prime}\left(  0\right)  =1$ and equation (\ref{eqsym}). We multiply the
last equation by $\Phi_{\left(  \lambda_{0},\lambda_{1}\right)  }\left(
h\right)  >0.$ Then it suffices to show that for all $h>0$
\[
g\left(  h\right)  :=1-e^{\left(  \lambda_{0}+\lambda_{1}\right)  h}%
\Phi_{\left(  \lambda_{0},\lambda_{1}\right)  }^{\prime}\left(  -h\right)
<0.
\]
Consider the case $\lambda_{1}\neq\lambda_{0},$ then
\[
g\left(  h\right)  =1-e^{\left(  \lambda_{0}+\lambda_{1}\right)  h}%
\frac{\lambda_{1}e^{\lambda_{1}\left(  -h\right)  }-\lambda_{0}e^{\lambda
_{0}\left(  -h\right)  }}{\lambda_{1}-\lambda_{0}}=1-\frac{\lambda
_{1}e^{\lambda_{0}h}-\lambda_{0}e^{\lambda_{1}h}}{\lambda_{1}-\lambda_{0}}.
\]
It follows that $g^{\prime}\left(  h\right)  =\lambda_{0}\lambda_{1}%
\Phi_{\left(  \lambda_{0},\lambda_{1}\right)  }\left(  h\right)  \leq0$ (here
we use that $\lambda_{0}\lambda_{1}\leq0)$ and we conclude that $g$ is
decreasing. Since $g\left(  0\right)  =0$ we conclude that $g\left(  h\right)
$ is negative for $h>0$, which ends the proof in the case $\lambda_{1}%
\neq\lambda_{0}$.

In the case $\lambda_{1}=\lambda_{0}$ we see that $\lambda_{1}=\lambda_{0}=0$,
which is the well known polynomial case.
\end{proof}

In the case of two positive frequencies one may obtain surprising effects if
we do not restrict the interval lengths of the partition $t_{1}<...<t_{n}.$ We
have seen that in this case the fundamental function $\Phi_{\left(
\lambda_{0},\lambda_{1}\right)  }$ may not be increasing and a plot for the
defining formula of a hat functions $H_{j}\left(  t\right)  $ looks like the
graph on Figure $5$.

%


\begin{figure}
[h] 
\begin{center}
\includegraphics[
height=2.1423in,
width=4.1945in
]%
{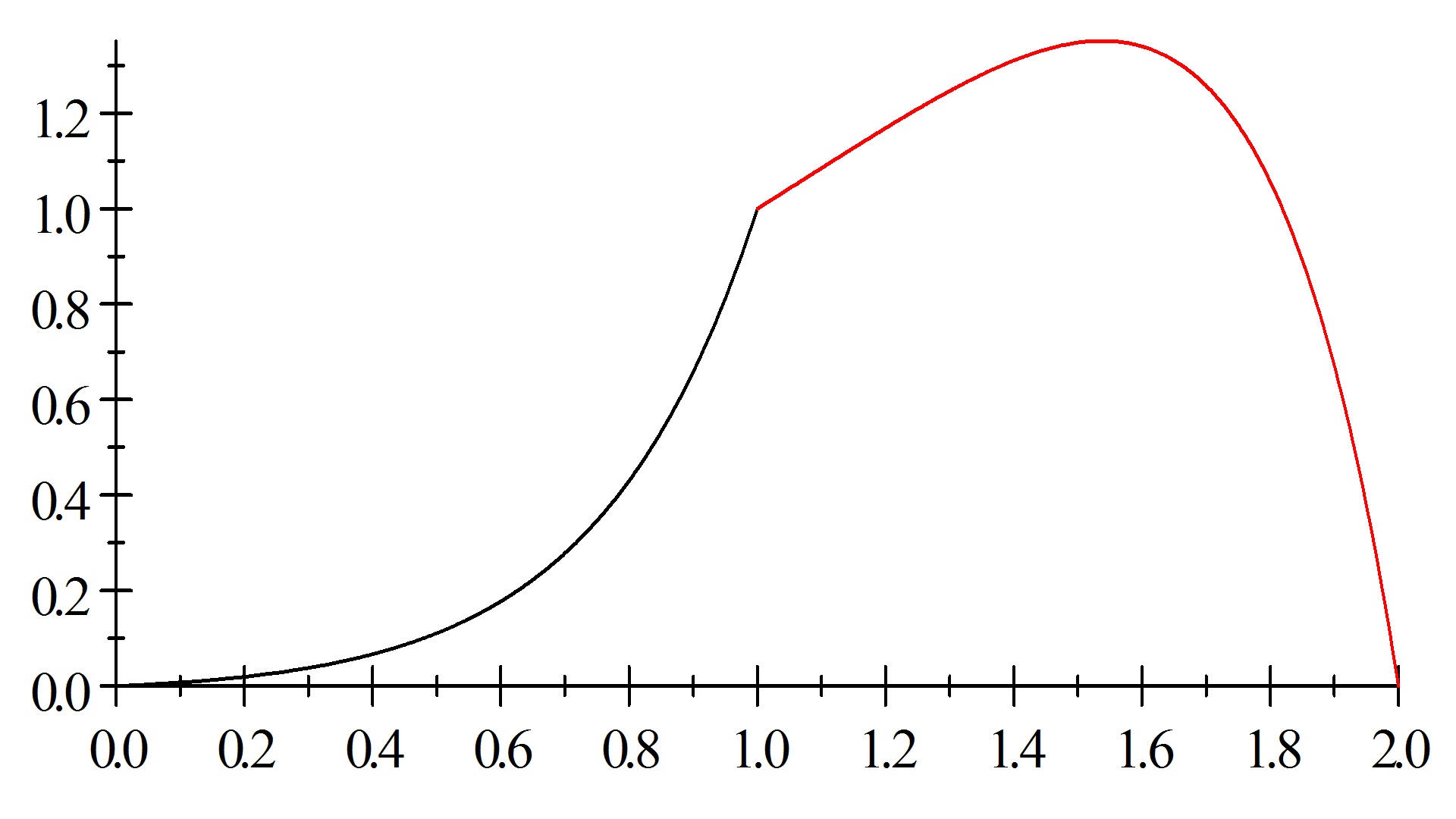}%
\caption{"Hat type function" for $\left(  \lambda_{0},\lambda
_{1}\right)  =\left(  4,1\right)  $}%
\end{center}
\end{figure}

However, if we restrict the size $\delta$ of the partition, the function
$\Phi_{\left(  \lambda_{0},\lambda_{1}\right)  }$ is increasing on $\left[
-\delta,\delta\right]  ,$ and this excludes such pathological behaviour of the
hat function.

\section{\label{S4}Error estimate for interpolation with piecewise exponential
splines of order $2$ in the uniform norm}

For the approximation of $f\in C^{2}\left[  t_{1},t_{n}\right]  $ by linear
splines the following estimate is well known:
\begin{equation}
\left\Vert f-I_{2}\left(  f\right)  \right\Vert _{\left[  t_{1},t_{n}\right]
}\leq\frac{1}{8}\max_{j=1,..n-1}\left\vert t_{j+1}-t_{j}\right\vert ^{2}%
\cdot\max_{\theta\in\left[  t_{1},t_{n}\right]  }\left\vert f^{\prime\prime
}\left(  \theta\right)  \right\vert , \label{eqClass1}%
\end{equation}
see e.g. \cite[p. 31]{deBoor}. The estimate (\ref{eqClass1}) is a simple
consequence from two facts: (i) $g\left(  t\right)  =f\left(  t\right)
-I_{2}\left(  f\right)  \left(  t\right)  $ vanishes at $t_{j}$ and $t_{j+1},$
and (ii) the following inequality is valid for all $g\in C^{2}\left[
a,b\right]  $ with $g\left(  a\right)  =g\left(  b\right)  =0$:
\begin{equation}
\left\vert g\left(  t\right)  \right\vert \leq\frac{1}{8}\left\vert
b-a\right\vert ^{2}\cdot\max_{\theta\in\left[  a,b\right]  }\left\vert
g^{\prime\prime}\left(  \theta\right)  \right\vert \text{.} \label{eqClass2}%
\end{equation}
We will find analogues of (\ref{eqClass2}) in the context of $L$-splines where
we will replace the second derivative in (\ref{eqClass2}) by
\[
\max_{\theta\in\left[  a,b\right]  }\left\vert L_{\left(  \lambda_{0}%
,\lambda_{1}\right)  }f\left(  \theta\right)  \right\vert \text{.}%
\]
We need now the following fact (see Remark \ref{RemarkOMEGA} in the Appendix):
if $\lambda_{0},\lambda_{1}$ are real and $a<b$ then there exists an
exponential polynomial $\Omega_{\lambda_{0},\lambda_{1},0}^{a,b} $ in
$E\left(  \lambda_{0},\lambda_{1},0\right)  $ such that
\begin{equation}
\Omega_{\lambda_{0},\lambda_{1},0}^{a,b}\left(  a\right)  =\Omega_{\lambda
_{0},\lambda_{1},0}^{a,b}\left(  b\right)  =0\text{ and }L_{\left(
\lambda_{0},\lambda_{1}\right)  }\Omega_{\lambda_{0},\lambda_{1},0}^{a,b}=-1.
\label{eqdefOmega}%
\end{equation}
We will further exploit the fact that $L_{\left(  \lambda_{0},\lambda
_{1}\right)  }\Omega_{\lambda_{0},\lambda_{1},0}^{a,b}$ is a constant
function. In the polynomial case (i.e. $\lambda_{0}=\lambda_{1}=0)$ we have
\[
\Omega_{0,0,0}^{a,b}\left(  t\right)  =\frac{1}{2}\left(  t-a\right)  \left(
b-t\right)  .
\]
If $\lambda_{0}\neq\lambda_{1}$ are both non-zero one can define
$\Omega_{\lambda_{0},\lambda_{1},0}^{a,b}\left(  t\right)  $ by putting
\begin{equation}
\Omega_{\lambda_{0},\lambda_{1},0}^{a,b}=\frac{\Phi_{\left(  \lambda
_{0},\lambda_{1}\right)  }\left(  t-a\right)  -e^{\left(  \lambda_{0}%
+\lambda_{1}\right)  \left(  b-a\right)  }\Phi_{\left(  \lambda_{0}%
,\lambda_{1}\right)  }\left(  t-b\right)  }{\lambda_{0}\lambda_{1}%
\Phi_{\left(  \lambda_{0},\lambda_{1}\right)  }\left(  b-a\right)  }-\frac
{1}{\lambda_{0}\lambda_{1}}. \label{eqFormulaOmega}%
\end{equation}

\begin{theorem}
The smallest constant $C$ such that for all $f\in C^{2}\left[  a,b\right]  $
with $f\left(  a\right)  =f\left(  b\right)  =0$ the following inequality
holds
\begin{equation}
\left\vert f\left(  t\right)  \right\vert \leq C\max_{\theta\in\left[
a,b\right]  }\left\vert L_{\left(  \lambda_{0},\lambda_{1}\right)  }f\left(
\theta\right)  \right\vert \text{ } \label{eqmaxderiv}%
\end{equation}
for all $t\in\left[  a,b\right]  $ is given by
\begin{equation}
C=M_{\lambda_{0},\lambda_{1}}^{a,b}:=\max_{t\in\left[  a,b\right]  }\left\vert
\Omega_{\lambda_{0},\lambda_{1},0}^{a,b}\left(  t\right)  \right\vert .
\label{eqDefLambda}%
\end{equation}
Moreover $\Omega_{\lambda_{0},\lambda_{1},0}^{a,b}$ is positive on $\left(
a,b\right)  .$
\end{theorem}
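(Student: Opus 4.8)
The plan is to show that for $f \in C^2[a,b]$ with $f(a)=f(b)=0$, one can write $f$ explicitly in terms of its image $g := L_{(\lambda_0,\lambda_1)}f$ via an integral operator whose kernel is (up to sign) the function $\Omega_{\lambda_0,\lambda_1,0}^{a,b}$, and then bound the result by pulling $g$ out in the uniform norm. First I would set $g := L_{(\lambda_0,\lambda_1)}f$ and observe that the boundary-value problem $L_{(\lambda_0,\lambda_1)}u = g$, $u(a)=u(b)=0$, has a unique solution (since the homogeneous problem $L_{(\lambda_0,\lambda_1)}u=0$, $u(a)=u(b)=0$ forces $u=0$ — an element of $E(\lambda_0,\lambda_1)$ vanishing at two points is zero, by the Chebyshev property in Proposition~\ref{PropCHebyshev}). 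Hence $f$ is that unique solution, and it can be represented as $f(t) = \int_a^b G(t,s) g(s)\, ds$ where $G(t,s) \geq 0$ is the Green's function for $L_{(\lambda_0,\lambda_1)}$ on $[a,b]$ with zero boundary data; the nonnegativity of $G$ is exactly the statement that $-u \geq 0$ whenever $L_{(\lambda_0,\lambda_1)}u = -1 \leq 0$ on $[a,b]$ with $u(a)=u(b)=0$, i.e. that $\Omega_{\lambda_0,\lambda_1,0}^{a,b} \geq 0$.

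The cleanest route avoiding an explicit Green's function is to work directly with $\Omega := \Omega_{\lambda_0,\lambda_1,0}^{a,b}$. Fix $t_0 \in (a,b)$ and consider the auxiliary function $w(t) := \Omega(t) g(t_0) - (\text{something})$ — more precisely, I would argue pointwise: let $m := \max_{\theta}|g(\theta)|$ so that $-m \leq g(t) \leq m$ on $[a,b]$. Then $h_\pm := f \mp m\,\Omega$ satisfies $L_{(\lambda_0,\lambda_1)} h_\pm = g \pm m$, which is respectively $\geq 0$ and $\leq 0$ on $[a,b]$, and $h_\pm(a)=h_\pm(b)=0$. So it suffices to prove the maximum principle: if $L_{(\lambda_0,\lambda_1)} u \leq 0$ on $[a,b]$ and $u(a)=u(b)=0$ then $u \leq 0$ on $[a,b]$ (and the reversed version). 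Granting this, $f - m\Omega = h_- \leq 0$ and $f + m\Omega = h_+ \geq 0$, hence $|f(t)| \leq m\,\Omega(t) \leq m \cdot \max_t|\Omega(t)| = M_{\lambda_0,\lambda_1}^{a,b}\cdot m$, which is (\ref{eqmaxderiv}) with the claimed constant. Taking $f = \Omega$ itself (so $g \equiv -1$, $m = 1$) shows the constant cannot be improved, giving sharpness; and the same inequality $\Omega \geq 0$, combined with $\Omega \not\equiv 0$ and the fact that $\Omega' \in E(\lambda_0,\lambda_1,0)$ so $\Omega$ has at most... actually $\Omega$ has at most two zeros as a nontrivial element of the 3-dimensional Chebyshev-type space $E(\lambda_0,\lambda_1,0)$ vanishing at $a$ and $b$, forces strict positivity on $(a,b)$.

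The main obstacle is establishing the maximum principle for the operator $L_{(\lambda_0,\lambda_1)} = (\frac{d}{dx}-\lambda_0)(\frac{d}{dx}-\lambda_1)$, which is a second-order operator but not in the self-adjoint Sturm--Liouville form one usually quotes — one must verify it can be written as $\frac{1}{\rho}\frac{d}{dx}(p \frac{d}{dx}(\cdot/\sigma))$ or similar, or argue directly. I would handle this by the substitution $u = e^{(\lambda_0+\lambda_1)x/2} v$ (using the symmetry already exploited in (\ref{eqsym})), or more simply by noting $L_{(\lambda_0,\lambda_1)} u = e^{\lambda_1 x}\frac{d}{dx}\!\left(e^{(\lambda_0-\lambda_1)x}\frac{d}{dx}(e^{-\lambda_0 x} u)\right)$; writing $\phi := e^{-\lambda_0 x} u$, the hypothesis becomes $\frac{d}{dx}(e^{(\lambda_0-\lambda_1)x}\phi') \leq 0$ times a positive factor, so $e^{(\lambda_0-\lambda_1)x}\phi'$ is nonincreasing, whence $\phi'$ changes sign at most once from $+$ to $-$, so $\phi$ has no interior local minimum; since $\phi$ vanishes (in sign-adjusted form) at the endpoints one deduces $\phi \geq 0$ hence $u \geq 0$ — this is precisely the argument pattern used in the proof of Proposition~\ref{PropSumHat}, so it should go through cleanly. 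Alternatively, if the explicit formula (\ref{eqFormulaOmega}) is available, one can simply verify positivity of $\Omega$ directly from that formula together with Proposition~\ref{PropIncreasing}, and then derive (\ref{eqmaxderiv}) by the squeezing argument above; I expect the paper's Appendix (Remark~\ref{RemarkOMEGA}) to supply whichever of these two inputs is needed.
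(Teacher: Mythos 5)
Your argument is correct in substance but follows a genuinely different route from the paper's. The paper proves the bound $C\le M_{\lambda_{0},\lambda_{1}}^{a,b}$ by writing down the Green function $G_{\lambda_{0},\lambda_{1}}^{a,b}\left(  t,\xi\right)  $ explicitly, computing the Wronskian to verify $G\le0$, and then combining the representation $f\left(  t\right)  =\int_{a}^{b}G\left(  t,\xi\right)  L_{\left(  \lambda_{0},\lambda_{1}\right)  }f\left(  \xi\right)  d\xi$ with the identity $\Omega_{\lambda_{0},\lambda_{1},0}^{a,b}\left(  t\right)  =-\int_{a}^{b}G\left(  t,\xi\right)  d\xi$; this identity also delivers the positivity of $\Omega_{\lambda_{0},\lambda_{1},0}^{a,b}$ and is reused in the very next proposition of the paper, which is the main payoff of that choice. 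You instead squeeze $f$ between the barriers $\pm m\,\Omega_{\lambda_{0},\lambda_{1},0}^{a,b}$ with $m=\max_{\theta}\left\vert L_{\left(  \lambda_{0},\lambda_{1}\right)  }f\left(  \theta\right)  \right\vert $, reducing everything to a one-sided maximum principle for $L_{\left(  \lambda_{0},\lambda_{1}\right)  }$, which you obtain from the first-order factorization $L_{\left(  \lambda_{0},\lambda_{1}\right)  }u=e^{\lambda_{0}x}\frac{d}{dx}\bigl(e^{\left(  \lambda_{1}-\lambda_{0}\right)  x}\frac{d}{dx}\left(  e^{-\lambda_{1}x}u\right)  \bigr)$ and a monotonicity argument; this is more elementary (no Wronskian, no explicit kernel) and is the same pattern the paper itself uses in Proposition~\ref{PropSumHat}. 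The sharpness argument (insert $f=\Omega_{\lambda_{0},\lambda_{1},0}^{a,b}$) is identical in both treatments, and your Chebyshev-space argument for \emph{strict} positivity on $\left(  a,b\right)  $ --- a nonzero element of the three-dimensional space $E\left(  \lambda_{0},\lambda_{1},0\right)  $ has at most two zeros, already used up by $a$ and $b$ --- is in fact tighter than the paper's, which literally only yields $\Omega\ge0$ from $G\le0$.

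Two sign slips should be repaired before writing this up. First, your maximum principle is stated backwards: for $\lambda_{0}=\lambda_{1}=0$ the hypothesis $u^{\prime\prime}\le0$ with $u\left(  a\right)  =u\left(  b\right)  =0$ gives $u\ge0$ (concavity), not $u\le0$; your own factorization argument and your application of the principle (concluding $f+m\Omega\ge0$ from $L_{\left(  \lambda_{0},\lambda_{1}\right)  }\left(  f+m\Omega\right)  \le0$) both use the correct direction, so only the displayed statement and the mismatched labels $h_{\pm}$ need fixing. Second, in your opening paragraph the Green function in the paper's normalization satisfies $G\le0$, not $G\ge0$; since that paragraph is superseded by your barrier argument, it can simply be dropped.
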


\begin{proof}
Suppose that $C$ is the best constant such that (\ref{eqmaxderiv}) holds for
all $f\in C^{2}\left[  a,b\right]  $ with $f\left(  a\right)  =f\left(
b\right)  =0.$ Inserting $f\left(  t\right)  =\Omega_{\lambda_{0},\lambda
_{1},0}^{a,b}$ gives
\[
\left\vert \Omega_{\lambda_{0},\lambda_{1},0}^{a,b}\left(  t\right)
\right\vert \leq C
\]
since $L_{\left(  \lambda_{0},\lambda_{1}\right)  }f=-1.$ It follows that
$M_{\lambda_{0},\lambda_{1}}^{a,b}\leq C.$ In order to show equality we recall
that the Green function $G_{\lambda_{0},\lambda_{1}}^{a,b}\left(
t,\xi\right)  $ of the differential operator $L_{\left(  \lambda_{0}%
,\lambda_{1}\right)  }$ for vanishing boundary conditions (see \cite[p.
65]{BiRo}) is defined by
\[
G_{\lambda_{0},\lambda_{1}}^{a,b}\left(  t,\xi\right)  =\left\{
\begin{array}
[c]{ccc}%
f\left(  t\right)  \frac{g\left(  \xi\right)  }{w\left(  \xi\right)  } &  &
\text{for }t\in\left[  a,\xi\right] \\
g\left(  t\right)  \frac{f\left(  \xi\right)  }{w\left(  \xi\right)  } &  &
\text{for }t\in\left[  \xi,b\right]
\end{array}
\right.
\]
where $f\left(  t\right)  =\Phi_{\left(  \lambda_{0},\lambda_{1}\right)
}\left(  t-a\right)  $ vanishes in $t=a$ and $g\left(  t\right)
=\Phi_{\left(  \lambda_{0},\lambda_{1}\right)  }\left(  t-b\right)  $ vanishes
in $t=b,$ and $w\left(  t\right)  =f\left(  t\right)  g^{\prime}\left(
t\right)  -f^{\prime}\left(  t\right)  g\left(  t\right)  $ is the Wronski
determinant. Note that $f\left(  t\right)  \geq0$ for all $t\in\left(
a,b\right)  $ and $g\left(  t\right)  \leq0$ for all $t\in\left(  a,b\right)
.$ A calculation shows that%
\[
w\left(  t\right)  =-e^{\left(  \lambda_{0}+\lambda_{1}\right)  \left(
t-a\right)  }\Phi_{\left(  \lambda_{0},\lambda_{1}\right)  }\left(
a-b\right)  \geq0\text{ for all }t\in\left(  a,b\right)  .
\]
Hence $\xi\longmapsto G_{\lambda_{0},\lambda_{1}}^{a,b}\left(  t,\xi\right)
\leq0$ for all $\xi\in\left(  a,b\right)  .$ It is known that for any function
$f\in C^{2}\left[  a,b\right]  $ with $f\left(  a\right)  =f\left(  b\right)
=0$ the representation
\begin{equation}
f\left(  t\right)  =\int_{a}^{b}G_{\lambda_{0},\lambda_{1}}^{a,b}\left(
t,\xi\right)  \cdot L_{\left(  \lambda_{0},\lambda_{1}\right)  }\left(
f\right)  \left(  \xi\right)  d\xi\label{eqrepresent}%
\end{equation}
holds. Let us take $f\left(  t\right)  =\Omega_{\lambda_{0},\lambda_{1}%
,0}^{a,b}\left(  t\right)  $ in (\ref{eqrepresent}). Since $L_{\left(
\lambda_{0},\lambda_{1}\right)  }\Omega_{\lambda_{0},\lambda_{1},0}%
^{a,b}\left(  t\right)  =-1$ we infer that%
\begin{equation}
\Omega_{\lambda_{0},\lambda_{1},0}^{a,b}\left(  t\right)  =-\int_{a}%
^{b}G_{\lambda_{0},\lambda_{1}}^{a,b}\left(  t,\xi\right)  d\xi.
\label{eqrepresentOmega}%
\end{equation}
It follows that $\Omega_{\lambda_{0},\lambda_{1},0}^{a,b}$ is positive on
$\left(  a,b\right)  $ since $\xi\longmapsto G_{\lambda_{0},\lambda_{1}}%
^{a,b}\left(  t,\xi\right)  \leq0$ for all $\xi\in\left(  a,b\right)  .$ From
(\ref{eqrepresent}) we see that
\begin{align*}
\left\vert f\left(  t\right)  \right\vert  &  \leq\int_{a}^{b}\left\vert
G_{\lambda_{0},\lambda_{1}}^{a,b}\left(  t,\xi\right)  \right\vert d\xi
\cdot\max_{\theta\in\left[  a,b\right]  }\left\vert L_{\left(  \lambda
_{0},\lambda_{1}\right)  }f\left(  \theta\right)  \right\vert \\
&  =\left\vert \Omega_{\lambda_{0},\lambda_{1},0}^{a,b}\left(  t\right)
\right\vert \cdot\max_{\theta\in\left[  a,b\right]  }\left\vert L_{\left(
\lambda_{0},\lambda_{1}\right)  }f\left(  \theta\right)  \right\vert .
\end{align*}
This implies that
\[
\left\Vert f\right\Vert _{\left[  a,b\right]  }\leq\max_{t\in\left[
a,b\right]  }\Omega_{\lambda_{0},\lambda_{1},0}^{a,b}\left(  t\right)
\cdot\max_{\theta\in\left[  a,b\right]  }\left\vert L_{\left(  \lambda
_{0},\lambda_{1}\right)  }f\left(  \theta\right)  \right\vert
\]
and therefore $C\leq M_{\lambda_{0},\lambda_{1}}^{a,b}.$ This ends the proof.
\end{proof}

In the case of symmetric frequencies we can determine the best constant:

\begin{theorem}
\label{ThmMOmega}For $\left(  \lambda_{0},\lambda_{1}\right)  =\left(
-\xi,\xi\right)  $ the following identity holds
\begin{equation}
M_{\xi,-\xi}^{a,b}=\max_{t\in\left[  a,b\right]  }\Omega_{\xi,-\xi,0}%
^{a,b}\left(  t\right)  =\left(  b-a\right)  ^{2}\cdot M^{\ast}\left(
\xi\left(  b-a\right)  \right)  \label{eqOmegaMax}%
\end{equation}
where
\begin{equation}
M^{\ast}\left(  x\right)  =\frac{\sinh x-2\sinh\left(  x/2\right)  }%
{x^{2}\sinh x}, \label{eqDefentireF}%
\end{equation}
and the following estimate holds:%
\begin{equation}
\Omega_{\xi,-\xi,0}^{a,b}\left(  t\right)  \leq\frac{1}{2}\left(  t-a\right)
\left(  b-t\right)  \leq\frac{1}{8}\left(  b-a\right)  ^{2}.
\label{eqOmegaest}%
\end{equation}

\end{theorem}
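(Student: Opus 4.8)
The plan is to obtain an explicit closed formula for $\Omega_{\xi,-\xi,0}^{a,b}$ and then read off both the maximum and the stated pointwise bound. First I would normalize: since everything is translation invariant, it is convenient to write $h=b-a$ and work on the interval $[-h/2,h/2]$ (or $[0,h]$), so that the symmetry $\lambda_0=-\lambda_1$ becomes an evenness statement about the center. For $(\lambda_0,\lambda_1)=(-\xi,\xi)$ we have $\lambda_0+\lambda_1=0$ and $\lambda_0\lambda_1=-\xi^2$, and $\Phi_{(-\xi,\xi)}(t)=\frac{\sinh\xi t}{\xi}$. Plugging these into the formula (\ref{eqFormulaOmega}) for $\Omega$ — with the factor $e^{(\lambda_0+\lambda_1)(b-a)}=1$ and $\lambda_0\lambda_1=-\xi^2$ — gives
\[
\Omega_{\xi,-\xi,0}^{a,b}(t)=\frac{1}{\xi^2}\left(1-\frac{\sinh\xi(t-a)-\sinh\xi(t-b)}{\sinh\xi(b-a)}\right).
\]
Using the identity $\sinh X-\sinh Y=2\cosh\frac{X+Y}{2}\sinh\frac{X-Y}{2}$ with $X=\xi(t-a)$, $Y=\xi(t-b)$ collapses the numerator to $2\cosh\!\big(\xi(t-\frac{a+b}{2})\big)\sinh\!\big(\xi\frac{b-a}{2}\big)$, which makes the evenness about the midpoint $m=(a+b)/2$ manifest.

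Next I would locate the maximum. Since $\cosh$ is an even function minimized at $0$, the bracketed quantity $1-\frac{2\cosh(\xi(t-m))\sinh(\xi h/2)}{\sinh(\xi h)}$ is maximized exactly at $t=m$, the midpoint. Evaluating there and using $\sinh(\xi h)=2\sinh(\xi h/2)\cosh(\xi h/2)$ yields
\[
\Omega_{\xi,-\xi,0}^{a,b}(m)=\frac{1}{\xi^2}\left(1-\frac{1}{\cosh(\xi h/2)}\right)=\frac{1}{\xi^2}\cdot\frac{\cosh(\xi h/2)-1}{\cosh(\xi h/2)}.
\]
A short manipulation (multiply numerator and denominator by $2\sinh(\xi h/2)$ and use $2\sinh u\cosh u=\sinh 2u$, $2\sinh u(\cosh u - 1)=\sinh 2u - 2\sinh u$ — wait, rather $2\sinh(\xi h/2)(\cosh(\xi h/2)-1)=\sinh(\xi h)-2\sinh(\xi h/2)$) turns this into $\frac{\sinh(\xi h)-2\sinh(\xi h/2)}{\xi^2\sinh(\xi h)}$, i.e. exactly $h^2 M^\ast(\xi h)$ with $M^\ast$ as in (\ref{eqDefentireF}) after writing $\xi^2=(\xi h)^2/h^2$. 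This establishes (\ref{eqOmegaMax}). One should check that $M^\ast$ extends to an entire (or at least continuous, positive) function at $x=0$, with $M^\ast(0)=1/8$, recovering the polynomial case $\Omega_{0,0,0}^{a,b}(m)=h^2/8$; this is a routine Taylor expansion of $\sinh x-2\sinh(x/2)=\frac{x^3}{48}+O(x^5)$ against $x^2\sinh x = x^3+O(x^5)$.

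For the pointwise estimate (\ref{eqOmegaest}), I would prove $\Omega_{\xi,-\xi,0}^{a,b}(t)\le\frac12(t-a)(b-t)$ by comparing with the polynomial case. Set $s=t-m\in[-h/2,h/2]$; the claim becomes
\[
\frac{1}{\xi^2}\left(1-\frac{2\cosh(\xi s)\sinh(\xi h/2)}{\sinh(\xi h)}\right)\le\frac12\!\left(\tfrac{h^2}{4}-s^2\right),
\]
equivalently, after dividing by $h^2$ and writing $u=\xi h/2$, $v=\xi s$ (so $|v|\le u$), that $\frac{1}{4u^2}\big(1-\frac{\cosh v}{\cosh u}\big)\le \frac18(1-v^2/u^2)$. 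Clearing denominators this is $2\cosh u\,(1-v^2/u^2)\ge \frac{4}{u^2}\cdot\frac{u^2}{2}\cdot\frac{\cosh u - \cosh v}{\cosh u}$... more cleanly: it suffices to show $2(\cosh u-\cosh v)\le (u^2-v^2)\cosh u$ for $|v|\le u$, which after using $\cosh u - \cosh v = \int_v^u \sinh w\,dw$ and $u^2-v^2=\int_v^u 2w\,dw$ reduces to the elementary inequality $\sinh w\le w\cosh u$ for $0\le w\le u$ — true since $\sinh w\le w\cosh w\le w\cosh u$. The right half of (\ref{eqOmegaest}), namely $\frac12(t-a)(b-t)\le\frac18(b-a)^2$, is just the AM–GM bound on a parabola. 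The only mildly delicate point, and the one I would be most careful about, is handling the sign of $\xi$ and the limit $\xi\to0$ uniformly — but since $M^\ast$ and all expressions above are even in $\xi$ and extend analytically through $0$, no case distinction is actually needed; the rest is bookkeeping with hyperbolic identities.
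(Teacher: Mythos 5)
Your proposal is correct and, for the identity (\ref{eqOmegaMax}), runs along essentially the same lines as the paper: both start from the explicit formula $\Omega_{\xi,-\xi,0}^{a,b}(t)=\frac{1}{\xi^{2}}\bigl(1-\frac{\sinh\xi(t-a)-\sinh\xi(t-b)}{\sinh\xi(b-a)}\bigr)$ and locate the maximum at the midpoint (the paper by noting there is a single critical point, you by rewriting the numerator as $2\cosh(\xi(t-m))\sinh(\xi h/2)$, which makes the evenness about $m=(a+b)/2$ and hence the location of the maximum manifest, and is arguably cleaner). Where you genuinely diverge is in the proof of the pointwise bound (\ref{eqOmegaest}): the paper considers $F(t)=\frac12(t-a)(b-t)-\Omega_{\xi,-\xi,0}^{a,b}(t)$, computes $F''=-\xi^{2}\Omega_{\xi,-\xi,0}^{a,b}$, and invokes the positivity of $\Omega$ on $(a,b)$ (established in the preceding theorem via the Green-function representation) to conclude that $F$ is concave with $F(a)=F(b)=0$, hence nonnegative. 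You instead reduce the claim to the elementary inequality $2(\cosh u-\cosh v)\le(u^{2}-v^{2})\cosh u$ for $|v|\le u$, proved by integrating $\sinh w\le w\cosh w\le w\cosh u$; this route is self-contained (it uses neither the positivity of $\Omega$ nor the Green function) at the cost of a little more hyperbolic bookkeeping, while the paper's route is shorter given the machinery already in place. One small slip in a parenthetical: the leading Taylor term of $\sinh x-2\sinh(x/2)$ is $x^{3}/8$, not $x^{3}/48$ (indeed $\tfrac16-\tfrac1{24}=\tfrac18$); your stated conclusion $M^{\ast}(0)=1/8$ is nevertheless correct and matches the polynomial case.
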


\begin{proof}
It is easy to see that the following function
\begin{equation}
f\left(  t\right)  :=\frac{1}{\xi^{2}}\left(  1-\frac{\sinh\xi\left(
t-a\right)  }{\sinh\xi\left(  b-a\right)  }+\frac{\sinh\xi\left(  t-b\right)
}{\sinh\xi\left(  b-a\right)  }\right)  . \label{eqDeff}%
\end{equation}
is equal to $\Omega_{\xi,-\xi,0}^{a,b}\left(  t\right)  $ since it is an
exponential polynomial in $E\left(  \xi,-\xi,0\right)  $ which vanishes in
$t=a $ and $t=b$ and $L_{\left(  \xi,-\xi\right)  }f\left(  t\right)  =-1.$
Further we see that
\[
F\left(  t\right)  :=\frac{1}{2}\left(  t-a\right)  \left(  b-t\right)
-f\left(  t\right)
\]
is a concave function since%
\[
F^{\prime\prime}\left(  t\right)  =-1+\frac{\sinh\xi\left(  t-a\right)
}{\sinh\xi\left(  b-a\right)  }-\frac{\sinh\xi\left(  t-b\right)  }{\sinh
\xi\left(  b-a\right)  }=-\xi^{2}\Omega_{\xi,-\xi}^{a,b}\left(  t\right)  <0.
\]
Since $F\left(  a\right)  =F\left(  b\right)  =0$ it follows that $F\left(
t\right)  \geq0$ for all $t\in\left[  a,b\right]  ,$ which proves
(\ref{eqOmegaest}). It is easy to see that $f$ has only one critical point,
namely $t_{\ast}=\left(  a+b\right)  /2$ which leads to the maximum of $f.$
Hence
\[
\max_{t\in\left[  a,b\right]  }f\left(  t\right)  =f\left(  t_{\ast}\right)
=\frac{1}{\xi^{2}}\left(  1-\frac{2\sinh\xi\left(  b-a\right)  /2}{\sinh
\xi\left(  b-a\right)  }\right)
\]
Now it is easy to derive formula (\ref{eqDefentireF}).
\end{proof}

The following picture shows that $\Omega_{\left(  1,-30,0\right)  }^{0,1},$
defined on the unit interval $\left[  0,1\right]  ,$ is not symmetric, while
the Green function on the diagonal, $G_{\left(  1,-30\right)  }^{0,1}$ is
symmetric and strictly larger. Both are smaller or equal than $t\left(
1-t\right)  $ (but not $\frac{1}{2}t\left(  1-t\right)  ,$ which is different
from the polynomial case), as seen from Figure $6$. 

%


\begin{figure}
[h] 
\begin{center}
\includegraphics[
height=2.1423in,
width=4.1945in
]%
{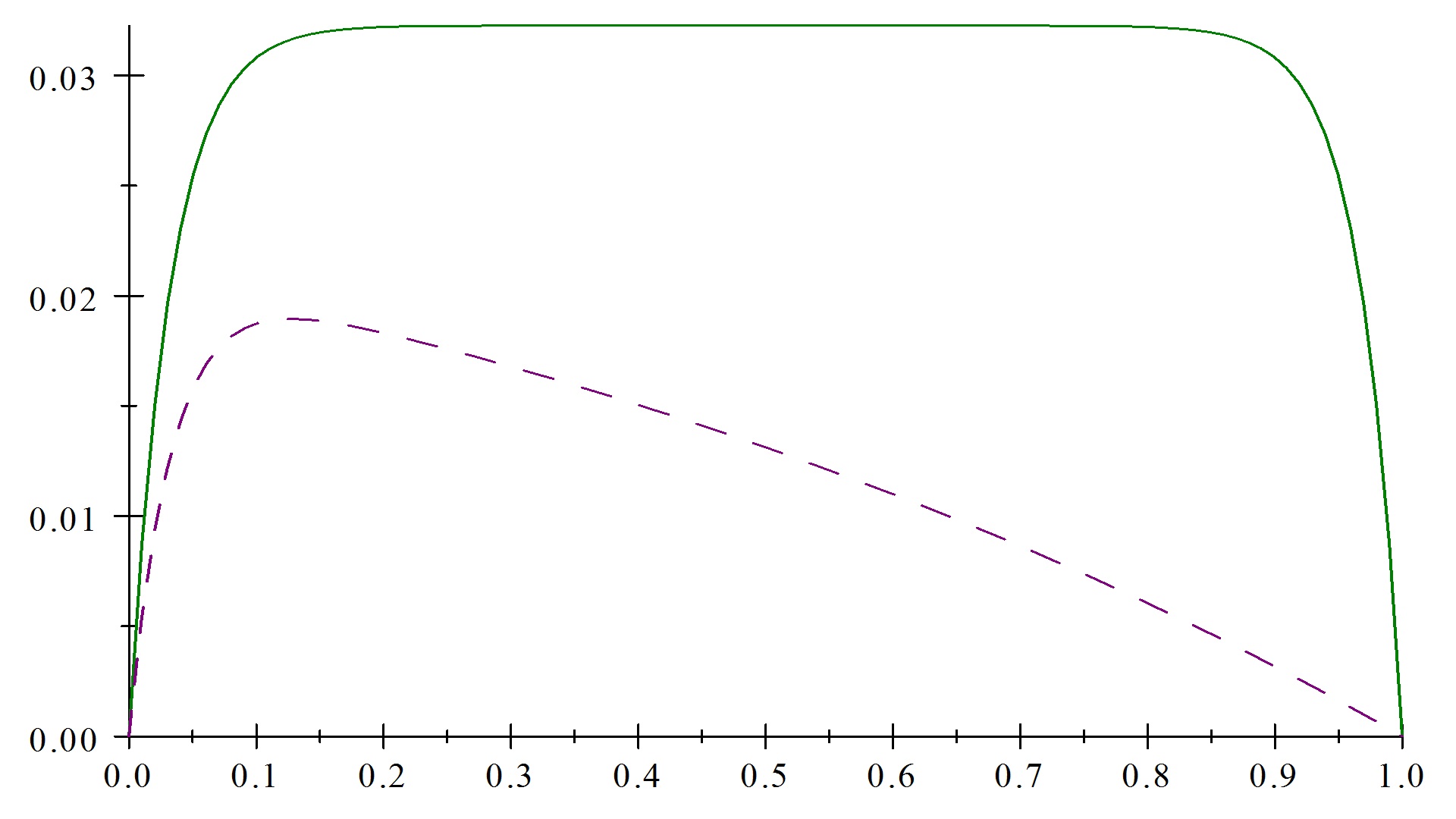}%
\caption{$\Omega_{\left(  1,-30,0\right)  }^{0,1}\left(  t\right)  $
is dash, and $G_{1,-30}^{0,1}\left(  t,t\right)  $ is a solid line.}%
\end{center}
\end{figure}

Next we give an upper bound for $\Omega_{\lambda_{0},\lambda_{1},0}%
^{a,b}\left(  t\right)  $:

\begin{proposition}
Let $\lambda_{0}\leq0\leq\lambda_{1}.$ Then for all $t\in\left[  a,b\right]
$
\[
\Omega_{\lambda_{0},\lambda_{1},0}^{a,b}\left(  t\right)  \leq\left(
b-a\right)  \cdot\left\vert G_{\lambda_{0},\lambda_{1}}^{a,b}\left(
t,t\right)  \right\vert
\]

\end{proposition}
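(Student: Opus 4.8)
The plan is to reduce everything to the Green function representation already established in the proof that $C=M_{\lambda_{0},\lambda_{1}}^{a,b}$. Namely, by (\ref{eqrepresentOmega}) and the fact that $G_{\lambda_{0},\lambda_{1}}^{a,b}\leq0$ on $\left(  a,b\right)  $,
\[
\Omega_{\lambda_{0},\lambda_{1},0}^{a,b}\left(  t\right)  =-\int_{a}^{b}G_{\lambda_{0},\lambda_{1}}^{a,b}\left(  t,\xi\right)  d\xi=\int_{a}^{b}\left\vert G_{\lambda_{0},\lambda_{1}}^{a,b}\left(  t,\xi\right)  \right\vert d\xi .
\]
Hence it suffices to prove the pointwise bound $\left\vert G_{\lambda_{0},\lambda_{1}}^{a,b}\left(  t,\xi\right)  \right\vert \leq\left\vert G_{\lambda_{0},\lambda_{1}}^{a,b}\left(  t,t\right)  \right\vert$ for all $t,\xi\in\left[  a,b\right]  $; integrating in $\xi$ then produces exactly the factor $\left(  b-a\right)  \left\vert G_{\lambda_{0},\lambda_{1}}^{a,b}\left(  t,t\right)  \right\vert$.

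To prove the pointwise bound, fix $t$ and recall the notation of that proof: $f\left(  s\right)  =\Phi_{\left(  \lambda_{0},\lambda_{1}\right)  }\left(  s-a\right)  \geq0$ and $g\left(  s\right)  =\Phi_{\left(  \lambda_{0},\lambda_{1}\right)  }\left(  s-b\right)  \leq0$ on $\left[  a,b\right]  $ (here we use Proposition \ref{PropIncreasing}(i), since $\lambda_{0}\leq0\leq\lambda_{1}$), and $w\left(  \xi\right)  =-e^{\left(  \lambda_{0}+\lambda_{1}\right)  \left(  \xi-a\right)  }\Phi_{\left(  \lambda_{0},\lambda_{1}\right)  }\left(  a-b\right)  >0$. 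From the definition of $G_{\lambda_{0},\lambda_{1}}^{a,b}$ and these signs, $\left\vert G_{\lambda_{0},\lambda_{1}}^{a,b}\left(  t,\xi\right)  \right\vert =\left\vert g\left(  t\right)  \right\vert f\left(  \xi\right)  /w\left(  \xi\right)  $ for $\xi\leq t$ and $\left\vert G_{\lambda_{0},\lambda_{1}}^{a,b}\left(  t,\xi\right)  \right\vert =f\left(  t\right)  \left\vert g\left(  \xi\right)  \right\vert /w\left(  \xi\right)  $ for $\xi\geq t$, while $\left\vert G_{\lambda_{0},\lambda_{1}}^{a,b}\left(  t,t\right)  \right\vert =f\left(  t\right)  \left\vert g\left(  t\right)  \right\vert /w\left(  t\right)  $. (The degenerate values $t=b$, i.e. $g\left(  t\right)  =0$, and $t=a$, i.e. $f\left(  t\right)  =0$, are trivial, since then $G_{\lambda_{0},\lambda_{1}}^{a,b}\left(  t,\cdot\right)  \equiv0$.) Thus the claim is equivalent to the two monotonicity statements: $\xi\longmapsto f\left(  \xi\right)  /w\left(  \xi\right)  $ is non-decreasing and $\xi\longmapsto\left\vert g\left(  \xi\right)  \right\vert /w\left(  \xi\right)  $ is non-increasing on $\left[  a,b\right]  $.

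These in turn follow from the explicit $w$ together with the symmetry relation (\ref{eqsym}), which I would write as $e^{-\left(  \lambda_{0}+\lambda_{1}\right)  s}\Phi_{\left(  \lambda_{0},\lambda_{1}\right)  }\left(  s\right)  =-\Phi_{\left(  \lambda_{0},\lambda_{1}\right)  }\left(  -s\right)  $. A one-line computation gives
\[
\frac{f\left(  \xi\right)  }{w\left(  \xi\right)  }=\frac{\Phi_{\left(  \lambda_{0},\lambda_{1}\right)  }\left(  a-\xi\right)  }{\Phi_{\left(  \lambda_{0},\lambda_{1}\right)  }\left(  a-b\right)  },\qquad\frac{\left\vert g\left(  \xi\right)  \right\vert }{w\left(  \xi\right)  }=\frac{-\Phi_{\left(  \lambda_{0},\lambda_{1}\right)  }\left(  b-\xi\right)  }{e^{\left(  \lambda_{0}+\lambda_{1}\right)  \left(  b-a\right)  }\Phi_{\left(  \lambda_{0},\lambda_{1}\right)  }\left(  a-b\right)  }.
\]
Since $\Phi_{\left(  \lambda_{0},\lambda_{1}\right)  }$ is strictly increasing on $\mathbb{R}$ and vanishes at $0$ (Proposition \ref{PropIncreasing}(i)), we have $\Phi_{\left(  \lambda_{0},\lambda_{1}\right)  }\left(  a-b\right)  <0$; as $\xi$ runs from $a$ to $b$, $a-\xi$ decreases from $0$ to $a-b$, so $\Phi_{\left(  \lambda_{0},\lambda_{1}\right)  }\left(  a-\xi\right)  $ decreases through negative values and the first ratio increases monotonically from $0$ to $1$. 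Similarly $b-\xi$ decreases from $b-a$ to $0$, so $\Phi_{\left(  \lambda_{0},\lambda_{1}\right)  }\left(  b-\xi\right)  $ decreases from a positive number to $0$, and the second ratio, having a negative denominator, is non-increasing. The only case not covered by the formula $\Phi_{\left(  \lambda_{0},\lambda_{1}\right)  }\left(  s\right)  =\left(  e^{\lambda_{1}s}-e^{\lambda_{0}s}\right)  /\left(  \lambda_{1}-\lambda_{0}\right)  $ used in these computations is $\lambda_{0}=\lambda_{1}=0$, where $w\equiv b-a$, $f\left(  \xi\right)  =\xi-a$, $g\left(  \xi\right)  =\xi-b$ and the monotonicities are immediate. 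This completes the argument. The only delicate point is the careful sign bookkeeping for $G_{\lambda_{0},\lambda_{1}}^{a,b}$ and the reduction of the integral estimate to the pointwise inequality $\left\vert G_{\lambda_{0},\lambda_{1}}^{a,b}\left(  t,\xi\right)  \right\vert \leq\left\vert G_{\lambda_{0},\lambda_{1}}^{a,b}\left(  t,t\right)  \right\vert$; the monotonicity computations themselves are routine.
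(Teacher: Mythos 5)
Your proposal is correct and follows essentially the same route as the paper: both reduce the claim via the integral representation (\ref{eqrepresentOmega}) to the pointwise bound $\left\vert G_{\lambda_{0},\lambda_{1}}^{a,b}\left(t,\xi\right)\right\vert \leq\left\vert G_{\lambda_{0},\lambda_{1}}^{a,b}\left(t,t\right)\right\vert$, and both establish that bound by splitting into $\xi\leq t$ and $\xi\geq t$ and using the symmetry relation (\ref{eqsym}) together with the monotonicity of the fundamental function from Proposition \ref{PropIncreasing}. Your ratios $f\left(\xi\right)/w\left(\xi\right)$ and $\left\vert g\left(\xi\right)\right\vert/w\left(\xi\right)$ are, after the substitution $\Phi_{\left(-\lambda_{0},-\lambda_{1}\right)}\left(s\right)=-\Phi_{\left(\lambda_{0},\lambda_{1}\right)}\left(-s\right)$, exactly the quotients the paper monotonizes.
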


\begin{proof}
Let $t\in\left(  a,b\right)  $ be fixed and let $a\leq\xi\leq t.$ Hence,
$t\in\left[  \xi,b\right]  $ and by the definition of the Green function we
obtain
\begin{align}
G_{\lambda_{0},\lambda_{1}}^{a,b}\left(  t,\xi\right)   &  =g\left(  t\right)
\frac{f\left(  \xi\right)  }{w\left(  \xi\right)  }=\frac{\Phi_{\left(
\lambda_{0},\lambda_{1}\right)  }\left(  t-b\right)  \Phi_{\left(  \lambda
_{0},\lambda_{1}\right)  }\left(  \xi-a\right)  }{-e^{\left(  \lambda
_{0}+\lambda_{1}\right)  \left(  \xi-a\right)  }\Phi_{\left(  \lambda
_{0},\lambda_{1}\right)  }\left(  a-b\right)  }\label{eqGreen1}\\
&  =\Phi_{\left(  \lambda_{0},\lambda_{1}\right)  }\left(  t-b\right)
\frac{\Phi_{\left(  -\lambda_{0},-\lambda_{1}\right)  }\left(  \xi-a\right)
}{\Phi_{\left(  -\lambda_{0},-\lambda_{1}\right)  }\left(  b-a\right)  }.
\label{eqGreen2}%
\end{align}
Since $\lambda_{0}\leq0\leq\lambda_{1},$ Propostion \ref{PropIncreasing} shows
that $\xi\longmapsto\Phi_{\left(  -\lambda_{0},-\lambda_{1}\right)  }\left(
\xi-a\right)  $ is increasing and positive on $\left(  a,t\right)  ,$ hence
for for $\xi\in\left[  a,t\right]  $ we obtain the inequality
\[
\left\vert G_{\lambda_{0},\lambda_{1}}^{a,b}\left(  t,\xi\right)  \right\vert
\leq\left\vert \Phi_{\left(  \lambda_{0},\lambda_{1}\right)  }\left(
t-b\right)  \right\vert \frac{\Phi_{\left(  -\lambda_{0},-\lambda_{1}\right)
}\left(  t-a\right)  }{\Phi_{\left(  -\lambda_{0},-\lambda_{1}\right)
}\left(  b-a\right)  }=\left\vert G_{\lambda_{0},\lambda_{1}}^{a,b}\left(
t,t\right)  \right\vert
\]
where for proving the last identity we have used the identities
(\ref{eqGreen1}) and (\ref{eqGreen2}) for $\xi=t.$

When $\xi\geq t$ then $t\in\left[  a,\xi\right]  $ and the definition of the
Green function shows that
\begin{align*}
G_{\lambda_{0},\lambda_{1}}^{a,b}\left(  t,\xi\right)   &  =f\left(  t\right)
\frac{g\left(  \xi\right)  }{w\left(  \xi\right)  }=\frac{\Phi_{\left(
\lambda_{0},\lambda_{1}\right)  }\left(  t-a\right)  \Phi_{\left(  \lambda
_{0},\lambda_{1}\right)  }\left(  \xi-b\right)  }{-e^{\left(  \lambda
_{0}+\lambda_{1}\right)  \left(  \xi-a\right)  }\Phi_{\left(  \lambda
_{0},\lambda_{1}\right)  }\left(  a-b\right)  }\\
&  =\frac{\Phi_{\left(  \lambda_{0},\lambda_{1}\right)  }\left(  t-a\right)
\Phi_{\left(  \lambda_{0},\lambda_{1}\right)  }\left(  \xi-b\right)
}{-e^{\left(  \lambda_{0}+\lambda_{1}\right)  \left(  \xi-b\right)
}e^{\left(  \lambda_{0}+\lambda_{1}\right)  \left(  b-a\right)  }\Phi_{\left(
\lambda_{0},\lambda_{1}\right)  }\left(  a-b\right)  }\\
&  =\frac{\Phi_{\left(  \lambda_{0},\lambda_{1}\right)  }\left(  t-a\right)
\Phi_{\left(  -\lambda_{0},-\lambda_{1}\right)  }\left(  \xi-b\right)
}{e^{\left(  \lambda_{0}+\lambda_{1}\right)  \left(  b-a\right)  }%
\Phi_{\left(  -\lambda_{0},-\lambda_{1}\right)  }\left(  b-a\right)  }.
\end{align*}
Since $\lambda_{0}\leq0\leq\lambda_{1}$ Propostion \ref{PropIncreasing} shows
that the function $x\longmapsto-\Phi_{\left(  -\lambda_{0},-\lambda
_{1}\right)  }\left(  x-b\right)  $ is decreasing for all $x,$ and for
$\xi\leq t$ we infer
\[
-G_{\lambda_{0},\lambda_{1}}^{a,b}\left(  t,\xi\right)  \leq\frac
{\Phi_{\left(  \lambda_{0},\lambda_{1}\right)  }\left(  t-a\right)  \left(
-\Phi_{\left(  -\lambda_{0},-\lambda_{1}\right)  }\left(  t-b\right)  \right)
}{e^{\left(  \lambda_{0}+\lambda_{1}\right)  \left(  b-a\right)  }%
\Phi_{\left(  -\lambda_{0},-\lambda_{1}\right)  }\left(  b-a\right)
}=-G_{\lambda_{0},\lambda_{1}}^{a,b}\left(  t,t\right)  .
\]
It follows that for fixed $t\in\left(  a,b\right)  $ and for all $\xi
\in\left[  a,b\right]  $ the inequality
\[
\left\vert G_{\lambda_{0},\lambda_{1}}^{a,b}\left(  t,\xi\right)  \right\vert
\leq\left\vert G_{\lambda_{0},\lambda_{1}}^{a,b}\left(  t,t\right)
\right\vert
\]
holds. Then the integral representation (\ref{eqrepresentOmega}) leads to the
estimate
\[
\Omega_{\lambda_{0},\lambda_{1},0}^{a,b}\left(  t\right)  \leq\left(
b-a\right)  \cdot\left\vert G_{\lambda_{0},\lambda_{1}}^{a,b}\left(
t,t\right)  \right\vert .
\]
The proof is complete.
\end{proof}

\begin{theorem}
Assume that $\lambda_{0}\leq\lambda_{1}$ are real numbers. Then
\[
\left(  b-a\right)  \cdot\left\vert G_{\left(  \lambda_{0},\lambda_{1}\right)
}^{a,b}\left(  t,t\right)  \right\vert \leq\frac{1}{4}\left(  b-a\right)
^{2}.
\]

\end{theorem}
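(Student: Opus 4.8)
The plan is to reduce the assertion to the pointwise bound $\left\vert G_{\left(\lambda_{0},\lambda_{1}\right)}^{a,b}\left(t,t\right)\right\vert\leq\frac{1}{4}\left(b-a\right)$ for every $t\in\left[a,b\right]$, and then to establish this by first writing the diagonal value of the Green function in closed form and afterwards applying an elementary convexity estimate together with the inequality $\tanh x\leq x$. So after the reduction, the work splits into a computational step and an estimation step.

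First I would compute $G_{\left(\lambda_{0},\lambda_{1}\right)}^{a,b}\left(t,t\right)$ explicitly. Put $\mu:=\left(\lambda_{0}+\lambda_{1}\right)/2$ and $\nu:=\left(\lambda_{1}-\lambda_{0}\right)/2\geq0$; then the fundamental function has the form $\Phi_{\left(\lambda_{0},\lambda_{1}\right)}\left(s\right)=e^{\mu s}\frac{\sinh\left(\nu s\right)}{\nu}$, to be read as $e^{\mu s}s$ when $\nu=0$. Inserting $f\left(t\right)=\Phi_{\left(\lambda_{0},\lambda_{1}\right)}\left(t-a\right)$, $g\left(t\right)=\Phi_{\left(\lambda_{0},\lambda_{1}\right)}\left(t-b\right)$ and the Wronskian $w\left(t\right)=-e^{\left(\lambda_{0}+\lambda_{1}\right)\left(t-a\right)}\Phi_{\left(\lambda_{0},\lambda_{1}\right)}\left(a-b\right)$ into the definition of $G_{\left(\lambda_{0},\lambda_{1}\right)}^{a,b}\left(t,t\right)=f\left(t\right)g\left(t\right)/w\left(t\right)$, and using (\ref{eqsym}) for $\Phi_{\left(\lambda_{0},\lambda_{1}\right)}\left(a-b\right)$, I expect every exponential factor $e^{\mu\,\cdot}$ to cancel, leaving
\[
\left\vert G_{\left(\lambda_{0},\lambda_{1}\right)}^{a,b}\left(t,t\right)\right\vert=\frac{\sinh\left(\nu\left(t-a\right)\right)\,\sinh\left(\nu\left(b-t\right)\right)}{\nu\,\sinh\left(\nu\left(b-a\right)\right)},
\]
where all three hyperbolic factors are positive for $t\in\left(a,b\right)$, and the right-hand side reduces to $\frac{\left(t-a\right)\left(b-t\right)}{b-a}$ when $\nu=0$, i.e. essentially the polynomial case. (Note that the diagonal of the Green function depends only on $\nu$, not on $\mu$.)

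Next, writing $h:=b-a$ and $s:=t-a\in\left[0,h\right]$, I would bound the numerator by the product-to-sum identity
\[
\sinh\left(\nu s\right)\sinh\left(\nu\left(h-s\right)\right)=\tfrac{1}{2}\bigl(\cosh\left(\nu h\right)-\cosh\left(\nu\left(2s-h\right)\right)\bigr)\leq\tfrac{1}{2}\bigl(\cosh\left(\nu h\right)-1\bigr)=\sinh^{2}\left(\nu h/2\right),
\]
which holds for every $s\in\left[0,h\right]$ since $\cosh\geq1$. Combining this with $\sinh\left(\nu h\right)=2\sinh\left(\nu h/2\right)\cosh\left(\nu h/2\right)$ gives $\left\vert G_{\left(\lambda_{0},\lambda_{1}\right)}^{a,b}\left(t,t\right)\right\vert\leq\frac{\tanh\left(\nu h/2\right)}{2\nu}$, and the elementary inequality $\tanh x\leq x$ for $x\geq0$, applied with $x=\nu h/2$, yields $\left\vert G_{\left(\lambda_{0},\lambda_{1}\right)}^{a,b}\left(t,t\right)\right\vert\leq\frac{h}{4}=\frac{b-a}{4}$. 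Multiplying by $\left(b-a\right)$ finishes the proof; the case $\nu=0$ is handled directly by $\frac{\left(t-a\right)\left(b-t\right)}{b-a}\leq\frac{h}{4}$.

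The main obstacle is the first computational step: one must track the exponential prefactors in $f$, $g$ and $w$ carefully enough to see that they all cancel and that the diagonal Green function collapses to the clean expression above depending only on $\nu=\left(\lambda_{1}-\lambda_{0}\right)/2$. Once that identity is in hand, the remainder is the same arithmetic–geometric/convexity argument familiar from the classical cubic spline estimate in \cite{deBoor}, reinforced only by the inequality $\tanh x\leq x$.
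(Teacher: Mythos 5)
Your proof is correct, and it takes a genuinely different and more direct route than the paper. The paper never computes $G_{\left(\lambda_{0},\lambda_{1}\right)}^{a,b}\left(t,t\right)$ in closed form; instead it observes that $F\left(t\right)=\Phi_{\left(\lambda_{0},\lambda_{1}\right)}\left(t-b\right)\Phi_{\left(-\lambda_{0},-\lambda_{1}\right)}\left(t-a\right)$ lies in $E\left(0,\lambda_{1}-\lambda_{0},\lambda_{0}-\lambda_{1}\right)$, applies the operators $D_{\left(\lambda_{0}-\lambda_{1}\right)}D_{\left(-\lambda_{0}+\lambda_{1}\right)}$ to show $F$ is a constant multiple of the function $\Omega_{\left(\lambda_{1}-\lambda_{0},-\left(\lambda_{1}-\lambda_{0}\right),0\right)}^{a,b}$, and then combines the explicit maximum $M^{\ast}$ of $\Omega$ from Theorem \ref{ThmMOmega} with a factor $T\left(x\right)=\frac{1}{2}\,x\sinh x/\sinh^{2}\left(x/2\right)$, reducing everything to the product bound $T\left(x\right)M^{\ast}\left(x\right)\leq\frac{1}{4}$ (which, after simplification, is again $\tanh\left(x/4\right)\leq x/4$). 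Your computation of the diagonal Green function as $\sinh\left(\nu\left(t-a\right)\right)\sinh\left(\nu\left(b-t\right)\right)/\left(\nu\sinh\left(\nu\left(b-a\right)\right)\right)$ with $\nu=\left(\lambda_{1}-\lambda_{0}\right)/2$ checks out — the prefactors $e^{\mu\left(2t-a-b\right)}$ in numerator and denominator do cancel — and the product-to-sum step $\sinh\left(\nu s\right)\sinh\left(\nu\left(h-s\right)\right)\leq\sinh^{2}\left(\nu h/2\right)$ followed by $\tanh x\leq x$ is airtight, including the degenerate case $\nu=0$. What your approach buys is self-containedness and transparency: it avoids the detour through $\Omega$ and the identification-by-uniqueness argument, and it makes visible that the diagonal Green function depends only on $\lambda_{1}-\lambda_{0}$. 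What the paper's route buys is reuse of machinery already set up for Theorem \ref{ThmMOmega} (the function $M^{\ast}$), which keeps its later computations organized around the single object $\Omega$.
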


\begin{proof}
The definition of the Green function in the last proof shows that
\[
G_{\left(  \lambda_{0},\lambda_{1}\right)  }^{a,b}\left(  t,t\right)
=\frac{\Phi_{\left(  \lambda_{0},\lambda_{1}\right)  }\left(  t-b\right)
\Phi_{\left(  -\lambda_{0},-\lambda_{1}\right)  }\left(  t-a\right)  }%
{\Phi_{\left(  -\lambda_{0},-\lambda_{1}\right)  }\left(  b-a\right)  }.
\]
It is clear that $F\left(  t\right)  :=\Phi_{\left(  \lambda_{0},\lambda
_{1}\right)  }\left(  t-b\right)  \Phi_{\left(  -\lambda_{0},-\lambda
_{1}\right)  }\left(  t-a\right)  $ is an exponential polynomial in $E\left(
0,\lambda_{1}-\lambda_{0},\lambda_{0}-\lambda_{1}\right)  .$ Further, $F$
vanishes at $t=a$ and $t=b$. For every constant $\lambda$ and function
$f\left(  t\right)  $ we define the differential operator $D_{\lambda
}f=df/dt-\lambda f.$ We apply the product rule $D_{\left(  \lambda+\mu\right)
}\left(  ab\right)  =D_{\lambda}a\cdot b+aD_{\mu}b$ and also formula
(\ref{rec0}) (in the Appendix) to obtain the equalities
\[
D_{\left(  -\lambda_{0}+\lambda_{1}\right)  }\left(  F\right)  =\Phi_{\left(
-\lambda_{1}\right)  }\left(  t-a\right)  \Phi_{\left(  \lambda_{0}%
,\lambda_{1}\right)  }\left(  t-b\right)  +\Phi_{\left(  -\lambda_{0}%
,-\lambda_{1}\right)  }\left(  t-a\right)  \Phi_{\left(  \lambda_{0}\right)
}\left(  t-b\right)
\]
and
\begin{align*}
&  D_{\left(  \lambda_{0}-\lambda_{1}\right)  }D_{\left(  -\lambda_{0}%
+\lambda_{1}\right)  }\left(  F\right) \\
&  =\Phi_{\left(  -\lambda_{1}\right)  }\left(  t-a\right)  \Phi_{\left(
\lambda_{1}\right)  }\left(  t-b\right)  +\Phi_{\left(  -\lambda_{0}\right)
}\left(  t-a\right)  \Phi_{\left(  \lambda_{0}\right)  }\left(  t-b\right) \\
&  =e^{-\lambda_{1}\left(  t-a\right)  }e^{\lambda_{1}\left(  t-b\right)
}+e^{-\lambda_{0}\left\langle t-a\right\rangle }e^{\lambda_{0}\left(
t-b\right)  }=e^{\lambda_{1}\left(  a-b\right)  }+e^{\lambda_{0}\left(
a-b\right)  }.
\end{align*}
Using the uniqueness property of the function $\Omega,$ we obtain the
following equality:
\[
G_{\left(  \lambda_{0},\lambda_{1}\right)  }^{a,b}\left(  t,t\right)
=\frac{e^{\lambda_{1}\left(  a-b\right)  }+e^{\lambda_{0}\left(  a-b\right)
}}{e^{-\lambda_{1}\left(  b-a\right)  }-e^{-\lambda_{1}\left(  b-a\right)  }%
}\left(  \lambda_{1}-\lambda_{0}\right)  \Omega_{\left(  \lambda_{1}%
-\lambda_{0},-\left(  \lambda_{1}-\lambda_{0}\right)  ,0\right)  }%
^{a,b}\left(  t\right)  .
\]
Further with $t=a-b,$ we obtain
\begin{align*}
\frac{e^{\lambda_{1}t}+e^{\lambda_{0}t}}{e^{\lambda_{1}t}-e^{\lambda_{0}t}}
&  =\frac{e^{\left(  -\lambda_{1}-\lambda_{0}\right)  t/2}}{e^{\left(
-\lambda_{1}-\lambda_{0}\right)  t/2}}\frac{e^{\lambda_{1}t}+e^{\lambda_{0}t}%
}{e^{\lambda_{1}t}-e^{\lambda_{0}t}}=\frac{e^{\left(  \lambda_{1}-\lambda
_{0}\right)  t/2}+e^{-\left(  \lambda_{1}-\lambda_{0}\right)  t/2}}{e^{\left(
\lambda_{1}-\lambda_{0}\right)  t/2}-e^{-\left(  \lambda_{1}-\lambda
_{0}\right)  t/2}}\\
&  =\frac{\cosh\left(  \left(  \lambda_{1}-\lambda_{0}\right)  t/2\right)
}{\sinh\left(  \left(  \lambda_{1}-\lambda_{0}\right)  t/2\right)  }.
\end{align*}
Since $\cosh\left(  x/2\right)  \sinh\left(  x/2\right)  =\frac{1}{2}\sinh x,$
it follows that
\[
\left(  b-a\right)  \cdot G_{\left(  \lambda_{0},\lambda_{1}\right)  }%
^{a,b}\left(  t,t\right)  =-T\left(  \left(  \lambda_{1}-\lambda_{0}\right)
\left(  b-a\right)  \right)  \cdot\Omega_{\left(  \lambda_{1}-\lambda
_{0},-\left(  \lambda_{1}-\lambda_{0}\right)  ,0\right)  }^{a,b}\left(
t\right)
\]
where we have put%
\[
T\left(  x\right)  =\frac{x\cosh x/2}{\sinh\left(  x/2\right)  }=\frac{1}%
{2}\frac{x\sinh x}{\sinh^{2}\left(  x/2\right)  }.
\]
Further, equation (\ref{eqOmegaMax}) and definition (\ref{eqDefentireF}) lead
to the estimate%
\[
\left(  b-a\right)  \cdot\left\vert G_{\left(  \lambda_{0},\lambda_{1}\right)
}^{a,b}\left(  t,t\right)  \right\vert \leq\left(  b-a\right)  ^{2}\cdot
T\left(  \left(  \lambda_{1}-\lambda_{0}\right)  \left(  b-a\right)  \right)
\cdot M^{\ast}\left(  \left(  \lambda_{1}-\lambda_{0}\right)  \left(
b-a\right)  \right)
\]
and%
\begin{align*}
T\left(  x\right)  M^{\ast}\left(  x\right)   &  =\frac{1}{2}\frac{x\sinh
x}{\sinh^{2}\left(  x/2\right)  }\frac{\sinh x-2\sinh\left(  x/2\right)
}{x^{2}\sinh x}\\
&  =\frac{1}{2}\frac{\sinh x-2\sinh\left(  x/2\right)  }{x\sinh^{2}\left(
x/2\right)  }\leq\frac{1}{4}.
\end{align*}

\end{proof}

Finally, we apply the above estimates to piecewise exponential splines in the
following result.

\begin{theorem}
Let $\lambda_{0,j}\leq\lambda_{1,j}$ be given for $j=1,...,n-1$, and define
$\varphi_{j}=\Phi_{\left(  \lambda_{0,j},\lambda_{1,j}\right)  }$. Then the
following estimate
\[
\left\Vert f-I_{2}\left(  f\right)  \right\Vert _{\left[  t_{1},t_{n}\right]
}\leq\max_{j=1,...n}M_{\lambda_{0,j},\lambda_{1,j}}^{t_{j},t_{j+1}}%
\max_{j=1,...,n-1}\max_{\theta\in\left[  t_{j},t_{j+1}\right]  }\left\vert
L_{\left(  \lambda_{0,j},\lambda_{1,j}\right)  }f\left(  \theta\right)
\right\vert
\]
holds for $f\in C^{2}\left[  t_{1},t_{n}\right]  $ where $I_{2}\left(
f\right)  $ is piecewise exponential spline of order $2$ interpolating $f.$
\end{theorem}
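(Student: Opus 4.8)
The plan is to reduce the global estimate on $\left[t_1,t_n\right]$ to the local estimates on the subintervals $\left[t_j,t_{j+1}\right]$ already established. First I would observe that $I_2(f)$ interpolates $f$ at all the knots, so the error function $g:=f-I_2(f)$ satisfies $g(t_j)=g(t_{j+1})=0$ for each $j=1,\dots,n-1$. Moreover, on each open interval $\left(t_j,t_{j+1}\right)$ the interpolant $I_2(f)$ is an exponential polynomial in $E\left(\lambda_{0,j},\lambda_{1,j}\right)$, hence $L_{\left(\lambda_{0,j},\lambda_{1,j}\right)}I_2(f)=0$ there, and consequently $L_{\left(\lambda_{0,j},\lambda_{1,j}\right)}g = L_{\left(\lambda_{0,j},\lambda_{1,j}\right)}f$ pointwise on $\left(t_j,t_{j+1}\right)$.

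Next I would fix an arbitrary $t\in\left[t_1,t_n\right]$ and choose the index $j$ with $t\in\left[t_j,t_{j+1}\right]$. Since $g\in C^2\left[t_j,t_{j+1}\right]$ vanishes at both endpoints, the first theorem of this section (the one identifying the best constant $C=M_{\lambda_{0,j},\lambda_{1,j}}^{t_j,t_{j+1}}$ in inequality (\ref{eqmaxderiv})) applies on the interval $\left[t_j,t_{j+1}\right]$ with frequencies $\lambda_{0,j},\lambda_{1,j}$, giving
\[
\left|g(t)\right| \le M_{\lambda_{0,j},\lambda_{1,j}}^{t_j,t_{j+1}}\,\max_{\theta\in\left[t_j,t_{j+1}\right]}\left|L_{\left(\lambda_{0,j},\lambda_{1,j}\right)}g(\theta)\right|
= M_{\lambda_{0,j},\lambda_{1,j}}^{t_j,t_{j+1}}\,\max_{\theta\in\left[t_j,t_{j+1}\right]}\left|L_{\left(\lambda_{0,j},\lambda_{1,j}\right)}f(\theta)\right|,
\]
using the identity $L_{\left(\lambda_{0,j},\lambda_{1,j}\right)}g=L_{\left(\lambda_{0,j},\lambda_{1,j}\right)}f$ on the interior (and continuity up to the closed subinterval, since $f\in C^2$ and $I_2(f)$ is a classical solution there).

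Finally I would bound each local constant and each local maximum by the corresponding global maxima: $M_{\lambda_{0,j},\lambda_{1,j}}^{t_j,t_{j+1}}\le \max_{k=1,\dots,n-1}M_{\lambda_{0,k},\lambda_{1,k}}^{t_k,t_{k+1}}$ and $\max_{\theta\in\left[t_j,t_{j+1}\right]}\left|L_{\left(\lambda_{0,j},\lambda_{1,j}\right)}f(\theta)\right|\le \max_{k=1,\dots,n-1}\max_{\theta\in\left[t_k,t_{k+1}\right]}\left|L_{\left(\lambda_{0,k},\lambda_{1,k}\right)}f(\theta)\right|$. Taking the supremum over $t\in\left[t_1,t_n\right]$ then yields the claimed estimate. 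The argument is essentially bookkeeping; the only point requiring a little care is that $I_2(f)$ here is the piecewise exponential spline (merely $C^0$ in general at the knots, but this is irrelevant since the local theorem is applied interval by interval and only the interpolation conditions $g(t_j)=g(t_{j+1})=0$ and the $C^2$-smoothness of $g$ on each closed subinterval are needed), and that the local best-constant theorem is stated for a single pair $(\lambda_0,\lambda_1)$ while here the frequencies vary with $j$ — but since each subinterval carries a fixed pair, the theorem applies verbatim on each. There is no real obstacle; the work was all done in the preceding theorems.
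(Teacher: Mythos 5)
Your proposal is correct and follows exactly the paper's own (much terser) argument: localize to each subinterval $\left[t_{j},t_{j+1}\right]$, note that $f-I_{2}\left(f\right)$ vanishes at the endpoints, and apply the earlier theorem with the best constant $M_{\lambda_{0,j},\lambda_{1,j}}^{t_{j},t_{j+1}}$. The extra details you supply (that $L_{\left(\lambda_{0,j},\lambda_{1,j}\right)}$ annihilates $I_{2}\left(f\right)$ on each subinterval, and the final bookkeeping with the maxima) are exactly what the paper leaves implicit.
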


\begin{proof}
For each $t\in\left[  t_{j},t_{j+1}\right]  $ we estimate the function
$f\left(  t\right)  -I_{2}\left(  f\right)  \left(  t\right)  .$ Since this
function vanishes at $t_{j}$ and $t_{j+1}$ we can use (\ref{eqmaxderiv}) with
$C=M_{\lambda_{0},\lambda_{1}}^{a,b}$ defined in (\ref{eqDefLambda}).
\end{proof}

Let us put. $\Delta:=\max_{j=1,..n-1}\left\vert t_{j+1}-t_{j}\right\vert .$ We
have proved that $M_{\xi,-\xi}^{a,b}\leq\frac{1}{8}\left(  b-a\right)  ^{2}$
for any real $\xi.$ For the piecewise exponential spline $I_{2}\left(
f\right)  $ with frequencies $\left(  \xi_{j},-\xi_{j}\right)  $ interpolating
the function $f$ at $t_{1}<...<t_{n}$ we obtain then the estimate
\[
\left\Vert f-I_{2}\left(  f\right)  \right\Vert _{\left[  t_{1},t_{n}\right]
}\leq\frac{\Delta^{2}}{8}\max_{j=1,...,n-1}\max_{\theta\in\left[
t_{j},t_{j+1}\right]  }\left\vert L_{\left(  \xi_{j},-\xi_{j}\right)
}f\left(  \theta\right)  \right\vert .
\]
In the case of $\xi=0$ this reduces to the well-known classical estimate
(\ref{eqClass1}).

\section{\label{S5}Estimate for the best $L^{2}$-approximation}

We denote by $C\left(  X\right)  $ the space of all continuous complex-valued
functions on a compact space $X.$ The maximum norm is denoted by $\left\Vert
f\right\Vert _{X}=\max_{x\in X}\left\vert f\left(  x\right)  \right\vert ,$
and we define the weighted inner product
\begin{equation}
\left\langle f,g\right\rangle _{w}=\int_{X}f\left(  x\right)  \overline
{g\left(  x\right)  }w\left(  x\right)  dx \label{eqinner}%
\end{equation}
where $w\left(  x\right)  $ is a positive continuous function. We denote by
$\left\Vert f\right\Vert _{w}=\sqrt{\left\langle f,f\right\rangle _{w}}$ the
induced norm.

For any finite dimensional subspace $U_{n}$ of $C\left(  X\right)  $ and $f\in
C\left(  X\right)  $ we define by $P^{U_{n}}\left(  f\right)  $ the best
$L^{2}$-approximation from $U_{n},$ so
\[
\left\Vert f-P^{U_{n}}\left(  f\right)  \right\Vert _{w}=\inf\left\{
\left\Vert f-h\right\Vert _{w}:h\in U_{n}\right\}  .
\]
Then $P^{U_{n}}:C\left(  X\right)  \rightarrow C\left(  X\right)  $ defined by
$f\longmapsto P^{U_{n}}\left(  f\right)  $ is linear operator and a
projection. The operator norm of $P^{U_{n}}$ with respect to the uniform norm
is defined as
\[
\left\Vert P^{U_{n}}\right\Vert _{\text{op}}=\sup_{f\in C\left(  X\right)
}\frac{\left\Vert P^{U_{n}}\left(  f\right)  \right\Vert _{X}}{\left\Vert
f\right\Vert _{X}}\in\left[  0,\infty\right]  .
\]
The operator norm $\left\Vert P^{U_{n}}\right\Vert _{\text{op}}$ is a useful
tool in approximation theory since for all $f\in C\left(  X\right)  $
\begin{equation}
\text{dist}_{C\left(  X\right)  }\left(  f,U_{n}\right)  \leq\left\Vert
f-P^{U_{n}}f\right\Vert _{X}\leq\left(  1+\left\Vert P^{U_{n}}\right\Vert
\right)  \text{dist}_{C\left(  X\right)  }\left(  f,U_{n}\right)
\label{eqSecondSt}%
\end{equation}
where we have defined dist$_{C\left(  X\right)  }\left(  f,U_{n}\right)
=\inf_{g\in U_{n}}\left\Vert f-g\right\Vert _{X}.$ This inequality is
elementary: we can estimate for any $g\in U_{n}$, $f\in C\left(  X\right)  $
and $x\in X$
\begin{align*}
\left\vert f\left(  x\right)  -P^{U_{n}}\left(  f\right)  \left(  x\right)
\right\vert  &  \leq\left\vert f\left(  x\right)  -g\left(  x\right)
\right\vert +\left\vert P^{U_{n}}\left(  g-f\right)  \left(  x\right)
\right\vert \\
&  \leq\left\Vert f-g\right\Vert _{X}+\left\Vert P^{U_{n}}\right\Vert
\left\Vert f-g\right\Vert _{X}.
\end{align*}
using the fact that $g=P^{U_{n}}g$ for any $g\in U_{n}$.

\begin{definition}
Assume that $U_{n}$ is a subspace generated by linearly independent functions
$B_{1},...,B_{n}$ in $C\left(  X\right)  $. Then we define a matrix $S=\left(
s_{i,j}\right)  _{i,j=1,...,n}$ by
\[
s_{i,j}=\left\langle B_{i},B_{j}\right\rangle _{w}=\int_{X}B_{i}\left(
x\right)  \overline{B_{j}\left(  x\right)  }w\left(  x\right)  dx.
\]
We say that $S$ is tridiagonal, if $s_{i,j}$ is zero whenever $j\neq
i,i+1,i-1,$ and we say that $S$ is row diagonally dominant with dominance
factor $c\in\left(  0,1\right)  $ if
\[
\left\vert s_{j-1,j}\right\vert +\left\vert s_{j,j+1}\right\vert \leq
c\left\vert s_{j,j}\right\vert \text{ for }j=1,...n\text{ }%
\]
with the convention that $s_{0,1}=0$ and $s_{n,n+1}=0.$ Alternatively, we
require $\left\vert s_{1,2}\right\vert \leq cs_{11}$ and $\left\vert
s_{n-1,n}\right\vert \leq cs_{n,n}$.
\end{definition}

The following result is a generalization of well known techniques used in the
error estimates of cubic splines, see e.g. \cite[p. 34]{deBoor}. We include
the proof for convenience for the reader but emphasize that it follows closely
the lines in \cite[p. 34]{deBoor}.

\begin{theorem}
\label{ThmTridiagonal}Let $U_{n}$ be the subspace generated by linearly
independent functions $B_{1},...,B_{n}$ in $C\left(  X\right)  $ and assume
that the matrix $S=\left(  s_{i,j}\right)  _{i,j=1,...,n}$ is tridiagonal and
row diagonally dominant with dominance constant $c\in\left(  0,1\right)  .$
Then
\begin{equation}
\left\Vert P^{U_{n}}\right\Vert _{\text{op}}\leq\frac{\max_{x\in X}\sum
_{j=1}^{n}\left\vert B_{j}\left(  x\right)  \right\vert }{1-c}\max
_{j=1,..,n}\frac{\int_{X}\left\vert B_{j}\left(  y\right)  \right\vert
w\left(  y\right)  dy}{\int_{X}\left\vert B_{j}\left(  y\right)  \right\vert
^{2}w\left(  y\right)  dy}. \label{eqoperatornormFirst}%
\end{equation}

\end{theorem}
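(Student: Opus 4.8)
The plan is to follow the classical argument for cubic splines in \cite[p.~34]{deBoor}, adapted to the tridiagonal Gram matrix $S$. Write $f^{\ast}=P^{U_{n}}(f)=\sum_{j=1}^{n}a_{j}B_{j}$ for the best $L^{2}$-approximation. The normal equations characterizing $f^{\ast}$ say that $\langle f-f^{\ast},B_{i}\rangle_{w}=0$ for $i=1,\dots,n$, i.e. $\sum_{j}s_{i,j}a_{j}=\langle f,B_{i}\rangle_{w}$ for all $i$. The right-hand side is bounded by $|\langle f,B_{i}\rangle_{w}|\leq\|f\|_{X}\int_{X}|B_{i}(y)|w(y)\,dy$. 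So the first step is to bound the coefficient vector $(a_{j})$ in terms of $\|f\|_{X}$ by exploiting diagonal dominance.

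For the coefficient bound, pick the index $k$ with $|a_{k}|$ maximal. From the $k$-th normal equation, $s_{k,k}a_{k}=\langle f,B_{k}\rangle_{w}-s_{k-1,k}a_{k-1}-s_{k,k+1}a_{k+1}$ (with the convention that the out-of-range terms vanish), so
\[
|s_{k,k}|\,|a_{k}|\leq\|f\|_{X}\int_{X}|B_{k}(y)|w(y)\,dy+\bigl(|s_{k-1,k}|+|s_{k,k+1}|\bigr)|a_{k}|.
\]
By row diagonal dominance $|s_{k-1,k}|+|s_{k,k+1}|\leq c\,|s_{k,k}|$, hence $(1-c)|s_{k,k}|\,|a_{k}|\leq\|f\|_{X}\int_{X}|B_{k}(y)|w(y)\,dy$, which gives
\[
\max_{j}|a_{j}|=|a_{k}|\leq\frac{\|f\|_{X}}{1-c}\cdot\frac{\int_{X}|B_{k}(y)|w(y)\,dy}{\int_{X}|B_{k}(y)|^{2}w(y)\,dy}\leq\frac{\|f\|_{X}}{1-c}\max_{j=1,\dots,n}\frac{\int_{X}|B_{j}(y)|w(y)\,dy}{\int_{X}|B_{j}(y)|^{2}w(y)\,dy},
\]
using $s_{k,k}=\int_{X}|B_{k}(y)|^{2}w(y)\,dy$.

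Finally, bound the function itself pointwise: for any $x\in X$,
\[
|f^{\ast}(x)|=\Bigl|\sum_{j=1}^{n}a_{j}B_{j}(x)\Bigr|\leq\max_{j}|a_{j}|\cdot\sum_{j=1}^{n}|B_{j}(x)|\leq\frac{\|f\|_{X}}{1-c}\Bigl(\max_{x\in X}\sum_{j=1}^{n}|B_{j}(x)|\Bigr)\max_{j=1,\dots,n}\frac{\int_{X}|B_{j}(y)|w(y)\,dy}{\int_{X}|B_{j}(y)|^{2}w(y)\,dy}.
\]
Taking the supremum over $x$ and dividing by $\|f\|_{X}$ yields exactly \eqref{eqoperatornormFirst}. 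The main obstacle — really the only subtle point — is the first step: handling the boundary indices $j=1$ and $j=n$ correctly in the diagonal dominance hypothesis (the alternative condition $|s_{1,2}|\leq cs_{1,1}$, $|s_{n-1,n}|\leq cs_{n,n}$ is precisely what makes the estimate on $|a_{k}|$ go through when $k\in\{1,n\}$), and noting that $S$ is positive definite (being a Gram matrix of linearly independent functions) so the normal equations have a unique solution and $f^{\ast}$ is well defined.
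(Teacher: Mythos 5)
Your proof is correct and follows essentially the same route as the paper's: normal equations, the diagonal-dominance estimate at the index of maximal coefficient, and the pointwise bound via $\sum_{j}\left\vert B_{j}\left(  x\right)  \right\vert$. The only (harmless) additions are your explicit remarks on the boundary indices and on positive definiteness of the Gram matrix.
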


\begin{proof}
For $g\in C\left(  X\right)  $ and the best approximant $g^{\ast}:=P^{U_{n}%
}\left(  g\right)  $ to $U_{n}$ it is well known that $g^{\ast}-g$ is
orthogonal to $U_{n}$, so
\begin{equation}
\left\langle f,g^{\ast}-g\right\rangle _{w}=0,\text{ hence }\left\langle
f,g^{\ast}\right\rangle _{w}=\left\langle f,g\right\rangle _{w}
\label{eqLeastSquares}%
\end{equation}
for all $f\in U_{n}.$ We take $f=B_{i}$ in (\ref{eqLeastSquares}) and write
$g^{\ast}=\sum_{j=1}^{n}\alpha_{j}B_{j}.$ From (\ref{eqLeastSquares}) it
follows that the coefficients $\alpha_{1},...,\alpha_{n}$ satisfy the
equations
\[
\sum_{j=1}^{n}\alpha_{j}s_{i,j}=\left\langle B_{i},g\right\rangle _{w}%
=:\beta_{i}%
\]
for $i=1,...,n.$ Since the basis $B_{1},....,B_{n}$ is tridiagonal we arrive
after division by $s_{ii}>0$ at
\begin{equation}
\alpha_{i-1}\frac{s_{i-1,i}}{s_{ii}}+\alpha_{i}+\alpha_{i+1}\frac{s_{i,i+1}%
}{s_{ii}}=\frac{\beta_{i}}{s_{ii}}. \label{eqalpha4}%
\end{equation}
Let $j$ be the index such that $\alpha_{j}=\max_{i=1,...,n}\left\vert
\alpha_{i}\right\vert .$ Then equation (\ref{eqalpha4}) leads to
\[
\left\vert \alpha_{j}\right\vert \leq\left\vert \frac{\beta_{j}}{s_{j,j}%
}\right\vert +\left\vert \alpha_{j}\right\vert \frac{\left\vert s_{j-1,j}%
\right\vert +\left\vert s_{j,j+1}\right\vert }{s_{j,j}}\leq\left\vert
\frac{\beta_{j}}{s_{j,j}}\right\vert +\left\vert \alpha_{j}\right\vert c.
\]
Thus, $\left(  1-c\right)  \left\vert \alpha_{j}\right\vert \leq\left\vert
\frac{\beta_{j}}{s_{j,j}}\right\vert $ and we infer that
\[
\max_{i=1,...,n}\left\vert \alpha_{i}\right\vert =\left\vert \alpha
_{j}\right\vert \leq\frac{1}{1-c}\left\vert \frac{\beta_{j}}{s_{j,j}%
}\right\vert \leq\frac{1}{1-c}\max_{i=1,...,n}\left\vert \frac{\beta_{i}%
}{s_{i,i}}\right\vert .
\]
It follows that
\[
\left\vert P^{U_{n}}\left(  g\right)  \left(  x\right)  \right\vert
=\left\vert g^{\ast}\left(  x\right)  \right\vert \leq\sum_{j=1}^{n}\left\vert
\alpha_{j}\right\vert \left\vert B_{j}\left(  x\right)  \right\vert \leq
\frac{\sum_{j=1}^{n}\left\vert B_{j}\left(  x\right)  \right\vert }{1-c}%
\max_{i=1,...,n}\left\vert \frac{\beta_{i}}{s_{i,i}}\right\vert .
\]
The statement of the theorem follows now from the estimate
\[
\left\vert \frac{\beta_{j}}{s_{j,j}}\right\vert =\frac{\left\vert \left\langle
B_{j},g\right\rangle _{w}\right\vert }{\left\langle B_{j},B_{j}\right\rangle
_{w}}=\frac{\left\vert \int_{X}B_{j}\left(  y\right)  g\left(  y\right)
w\left(  y\right)  dy\right\vert }{\int_{X}\left\vert B_{j}\left(  y\right)
\right\vert ^{2}w\left(  y\right)  dy}\leq\left\Vert g\right\Vert _{X}%
\frac{\int_{X}\left\vert B_{j}\left(  y\right)  \right\vert w\left(  y\right)
dy}{\int_{X}\left\vert B_{j}\left(  y\right)  \right\vert ^{2}w\left(
y\right)  dy}.
\]

\end{proof}

Assume now that $\varphi_{j}:\left[  -\delta,\delta\right]  \rightarrow
\mathbb{C}$ are continuous strictly increasing functions with $\varphi
_{j}\left(  0\right)  =0$ for $j=1,...,n-1,$ and define the hat functions
$H_{1},...,H_{n}$ for $t_{1}<...<t_{n}$ as in Section 2. Let $U_{n}$ be the
linear span of $H_{1},...,H_{n}.$ We consider the inner product
\begin{equation}
\left\langle f,g\right\rangle _{p}=\int_{t_{1}}^{t_{n}}f\left(  t\right)
g\left(  t\right)  e^{pt}dt. \label{eqinnerprod}%
\end{equation}
We introduce the following notations:
\begin{align}
A_{\varphi}\left(  h\right)   &  =\frac{\varphi\left(  h\right)  }%
{\varphi\left(  -h\right)  }\frac{\int_{0}^{h}\varphi\left(  \tau-h\right)
\varphi\left(  \tau\right)  e^{p\tau}dt}{\int_{0}^{h}\left(  \varphi\left(
\tau\right)  \right)  ^{2}e^{p\tau}dt},\label{eqA}\\
B_{\varphi}\left(  h\right)   &  =\frac{\varphi\left(  -h\right)  }%
{\varphi\left(  h\right)  }\frac{\int_{0}^{h}\varphi\left(  \tau-h\right)
\varphi\left(  \tau\right)  e^{p\tau}dt}{\int_{0}^{h}\left(  \varphi\left(
\tau-h\right)  \right)  ^{2}e^{p\tau}dt},\label{eqB}\\
C_{\varphi}\left(  h\right)   &  =\varphi\left(  h\right)  \frac{\int_{0}%
^{h}\varphi\left(  t\right)  e^{pt}dt}{\int_{t_{0}}^{h}\varphi\left(
t\right)  ^{2}e^{pt}dt},\label{eqC}\\
D_{\varphi}\left(  h\right)   &  =\varphi\left(  -h\right)  \frac{\int_{0}%
^{h}\varphi\left(  t-h\right)  e^{pt}dt}{\int_{0}^{h}\varphi\left(
t-h\right)  ^{2}e^{pt}dt}. \label{eqD}%
\end{align}

\begin{proposition}
\label{PropABCD}Assume that $\varphi_{j}:\left[  -\delta,\delta\right]
\rightarrow\mathbb{C}$ are continuous strictly increasing functions with
$\varphi_{j}\left(  0\right)  =0$ for $j=1,...,n-1,$ and let $t_{1}<...<t_{n}$
with $t_{j+1}-t_{j}\leq\delta$ for $j=1,...,n-1.$ Then the matrix $S=\left(
s_{i,j}\right)  _{i,j=1,...,n}$ with $s_{i,j}=\left\langle H_{i}%
,H_{j}\right\rangle _{p}$ is row diagonally dominant with constant
$c\in\left(  0,1\right)  $ if
\begin{equation}
\max_{j=1,...n-1}\left\{  A_{\varphi_{j}}\left(  h_{j}\right)  ,B_{\varphi
_{j}}\left(  h_{j}\right)  \right\}  \leq c \label{eqABc}%
\end{equation}
where $h_{j}:=t_{j+1}-t_{j}.$ Further, the following estimate holds
\begin{equation}
\left\Vert P^{U_{n}}\right\Vert _{\text{op}}\leq\max_{t\in\left[  t_{1}%
,t_{n}\right]  }\sum_{j=1}^{n}\left\vert H_{j}\left(  t\right)  \right\vert
\frac{\max_{j=1,...n-1}\left\{  C_{\varphi_{j}}\left(  h_{j}\right)
,D_{\varphi_{j}}\left(  h_{j}\right)  \right\}  }{1-c} \label{eqoperatornorm}%
\end{equation}

\end{proposition}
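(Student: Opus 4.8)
The plan is to compute the entries $s_{i,j}=\langle H_i,H_j\rangle_p$ explicitly and to recognize the quantities in (\ref{eqA})--(\ref{eqD}) as precisely the ratios that control row diagonal dominance and the two factors appearing in Theorem \ref{ThmTridiagonal}. First I would observe that, since $H_i$ has support in $[t_{i-1},t_{i+1}]$, two hat functions $H_i$ and $H_j$ overlap on a set of positive measure only when $|i-j|\le 1$; hence $S$ is tridiagonal. This is exactly the structure required by Theorem \ref{ThmTridiagonal}, so it remains to verify the dominance hypothesis and then to plug into (\ref{eqoperatornormFirst}).

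Next I would evaluate the relevant integrals over each subinterval $[t_j,t_{j+1}]$ of length $h_j=t_{j+1}-t_j$. On that interval $H_j(t)=\dfrac{\varphi_j(t-t_{j+1})}{\varphi_j(t_j-t_{j+1})}=\dfrac{\varphi_j(t-t_{j+1})}{\varphi_j(-h_j)}$ and $H_{j+1}(t)=\dfrac{\varphi_j(t-t_j)}{\varphi_j(t_{j+1}-t_j)}=\dfrac{\varphi_j(t-t_j)}{\varphi_j(h_j)}$. Substituting $\tau=t-t_j$ (so $e^{pt}=e^{pt_j}e^{p\tau}$, and the common factor $e^{pt_j}$ cancels in every ratio) turns the contribution of $[t_j,t_{j+1}]$ to $s_{j+1,j+1}$ into a multiple of $\int_0^{h_j}\varphi_j(\tau-h_j)^2 e^{p\tau}d\tau$, the contribution to $s_{j,j}$ into a multiple of $\int_0^{h_j}\varphi_j(\tau)^2 e^{p\tau}d\tau$ — wait, more carefully: the piece of $H_j$ living on $[t_j,t_{j+1}]$ uses $\varphi_j(\tau-h_j)$, while on $[t_{j-1},t_j]$ it uses $\varphi_{j-1}$; so $s_{j,j}$ is a sum of a $\varphi_{j-1}$-integral and a $\varphi_j$-integral. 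The off-diagonal entry $s_{j,j+1}=\int_{t_j}^{t_{j+1}}H_j H_{j+1}e^{pt}dt$ becomes a multiple of $\int_0^{h_j}\varphi_j(\tau-h_j)\varphi_j(\tau)e^{p\tau}d\tau$. Forming the ratio $s_{j,j+1}/s_{j,j}$, using the portion of $s_{j,j}$ coming from the same subinterval as a lower bound for the (positive) denominator, and remembering the sign factor $\varphi_j(-h_j)<0<\varphi_j(h_j)$, one sees that the relevant bound on $|s_{j,j+1}|/s_{j,j}$ is exactly $A_{\varphi_j}(h_j)$ or $B_{\varphi_j}(h_j)$ depending on whether one divides by the $s_{j,j}$ or the $s_{j+1,j+1}$ row. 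Hence (\ref{eqABc}) gives row diagonal dominance with constant $c$.

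For the operator-norm bound I would invoke Theorem \ref{ThmTridiagonal} directly with $B_j=H_j$: its right-hand side is $\dfrac{\max_t\sum_j|H_j(t)|}{1-c}\,\max_j\dfrac{\int|H_j|e^{pt}}{\int|H_j|^2e^{pt}}$. It remains to bound each $\dfrac{\int_{t_1}^{t_n}|H_j(t)|e^{pt}dt}{\int_{t_1}^{t_n}|H_j(t)|^2e^{pt}dt}$ by $\max\{C_{\varphi_j}(h_j),D_{\varphi_j}(h_j)\}$ (for interior $j$, the two subintervals in the support of $H_j$ give the two quantities $C$ and $D$; at the endpoints only one of them occurs). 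Writing $\int|H_j| = \alpha+\beta$ and $\int|H_j|^2=\gamma+\delta$ as sums over the two subintervals and using the elementary inequality $\frac{\alpha+\beta}{\gamma+\delta}\le\max\{\frac\alpha\gamma,\frac\beta\delta\}$ finishes the estimate. The main obstacle is purely bookkeeping: keeping the three indices $j-1,j,j+1$, the two subintervals meeting at each knot, and the sign of $\varphi_j(-h_j)$ straight so that the absolute values collapse correctly into the normalized quantities (\ref{eqA})--(\ref{eqD}); no genuinely hard analytic step is involved once the substitution $\tau=t-t_j$ is in place.
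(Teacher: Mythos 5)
Your plan follows the paper's own proof essentially step for step: tridiagonality from the supports, identification of the ratios $s_{j,j+1}/c_{j+1}$ and $s_{j,j+1}/d_j$ with $A_{\varphi_j}(h_j)$ and $B_{\varphi_j}(h_j)$ after the shift $\tau=t-t_j$, the mediant inequality $\frac{\alpha+\beta}{\gamma+\delta}\leq\max\left\{\frac{\alpha}{\gamma},\frac{\beta}{\delta}\right\}$ to handle the two subintervals in the support of each interior $H_j$, and a direct appeal to Theorem \ref{ThmTridiagonal} with the $C_{\varphi_j}$, $D_{\varphi_j}$ bound on $\int|H_j|e^{pt}dt/\int|H_j|^2e^{pt}dt$. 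This is correct and is the same argument as in the paper.
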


\begin{proof}
We shall apply Theorem \ref{ThmTridiagonal}. Let us define
\begin{align*}
c_{j}  &  =\int_{t_{j-1}}^{t_{j}}\left\vert \frac{\varphi_{j-1}\left(
t-t_{j-1}\right)  }{\varphi_{j-1}\left(  t_{j}-t_{j-1}\right)  }\right\vert
^{2}e^{pt}dt\text{ for }j=2,...,n,\\
d_{j}  &  =\int_{t_{j}}^{t_{j+1}}\left\vert \frac{\varphi_{j}\left(
t-t_{j+1}\right)  }{\varphi_{j}\left(  t_{j}-t_{j+1}\right)  }\right\vert
^{2}e^{pt}dt\text{ for }j=1,....n-1.
\end{align*}
Since $H_{j}$ has support in $\left[  t_{j-1},t_{j+1}\right]  $ for
$j=2,...,n-1$ we obtain
\begin{equation}
s_{j,j}=\int_{t_{1}}^{t_{n}}\left\vert H_{j}\left(  t\right)  \right\vert
^{2}e^{pt}dt=c_{j}+d_{j} \label{eqL2integral}%
\end{equation}
for $j=2,...,n-1,$ and for $j=1$ and $j=n$ we have
\[
s_{1,1}=\int_{t_{1}}^{t_{2}}\left\vert H_{1}\left(  t\right)  \right\vert
^{2}e^{pt}dt=d_{1}\text{ and }s_{n,n}=\int_{t_{n-1}}^{t_{n}}\left\vert
H_{n}\left(  t\right)  \right\vert ^{2}e^{pt}dt=c_{n}.
\]
Since the product function $H_{j-1}H_{j}$ has support in $\left[
t_{j-1},t_{j}\right]  $ we see that for $j=2,...,n$
\[
s_{j-1,j}=\int_{t_{j-1}}^{t_{j}}\frac{\varphi_{j-1}\left(  t-t_{j}\right)
}{\varphi_{j-1}\left(  t_{j-1}-t_{j}\right)  }\frac{\varphi_{j-1}\left(
t-t_{j-1}\right)  }{\varphi_{j-1}\left(  t_{j}-t_{j-1}\right)  }e^{pt}dt.
\]
Similarly, $H_{j}H_{j+1}$ has support in $\left[  t_{j},t_{j+1}\right]  $, and
for $j=1,...,n-1$
\[
s_{j,j+1}=\int_{t_{j}}^{t_{j+1}}\frac{\varphi_{j}\left(  t-t_{j+1}\right)
}{\varphi_{j}\left(  t_{j}-t_{j+1}\right)  }\frac{\varphi_{j}\left(
t-t_{j}\right)  }{\varphi_{j}\left(  t_{j+1}-t_{j}\right)  }e^{pt}dt.
\]
It follows that for $j=2,...,n-1$
\begin{equation}
\frac{\left\vert s_{j-1,j}\right\vert +\left\vert s_{j,j+1}\right\vert
}{s_{j,j}}=\frac{\left\vert s_{j-1,j}\right\vert +\left\vert s_{j,j+1}%
\right\vert }{c_{j}+d_{j}}\leq\max\left\{  \frac{\left\vert s_{j-1,j}%
\right\vert }{c_{j}},\frac{\left\vert s_{j,j+1}\right\vert }{d_{j}}\right\}  .
\label{eqsdominant}%
\end{equation}
Hence, in order to apply Theorem \ref{ThmTridiagonal}, it suffices to require
that
\[
\max\left\{  \frac{\left\vert s_{j,j+1}\right\vert }{c_{j+1}},\frac{\left\vert
s_{j,j+1}\right\vert }{d_{j}}\right\}  \leq c\text{ for all }%
j=1,...,n-1\text{.}%
\]
The last condition is satisfied since (\ref{eqABc}) holds and for
$j=1,...,n-1$
\[
\frac{s_{j,j+1}}{c_{j+1}}=\frac{\varphi_{j}\left(  t_{j+1}-t_{j}\right)
}{\varphi_{j}\left(  t_{j}-t_{j+1}\right)  }\frac{\int_{t_{j}}^{t_{j+1}%
}\varphi_{j}\left(  t-t_{j+1}\right)  \varphi_{j}\left(  t-t_{j}\right)
e^{pt}dt}{\int_{t_{j}}^{t_{j+1}}\left(  \varphi_{j}\left(  t-t_{j}\right)
\right)  ^{2}e^{pt}dt}=A_{\varphi_{j}}\left(  t_{j+1}-t_{j}\right)  .
\]
Similarly, for $j=1,...,n-1$, we have $\frac{s_{j,j+1}}{d_{j}}=B_{\varphi_{j}%
}\left(  t_{j+1}-t_{j}\right)  .$

The inequality (\ref{eqoperatornorm}) is a simple consequence of
(\ref{eqoperatornormFirst}) and the following claim:
\begin{equation}
\frac{\int_{t_{1}}^{t_{n}}H_{j}\left(  t\right)  w\left(  t\right)  dt}%
{\int_{t_{1}}^{t_{n}}H_{j}\left(  t\right)  ^{2}w\left(  t\right)  dt}\leq
\max_{j=1,...n-1}\left\{  C_{\varphi_{j}}\left(  t_{j+1}-t_{j}\right)
,D_{\varphi_{j}}\left(  t_{j+1}-t_{j}\right)  \right\}  \label{eqBoundM}%
\end{equation}
for all $j=1,...n-1.$ Indeed, for $j=2,...,n-1$ we have
\[
\frac{\int_{t_{1}}^{t_{n}}H_{j}\left(  t\right)  e^{pt}dt}{\int_{t_{1}}%
^{t_{n}}H_{j}\left(  t\right)  ^{2}e^{pt}dt}=\frac{\sigma_{j-1,j}%
+\sigma_{j,j+1}}{c_{j}+d_{j}}\leq\max\left\{  \frac{\sigma_{j-1,j}}{c_{j}%
},\frac{\sigma_{j,j+1}}{d_{j}}\right\}
\]
where we define for $j=2,...,n-1$
\[
\sigma_{j-1,j}=\int_{t_{j-1}}^{t_{j}}\frac{\varphi_{j-1}\left(  t-t_{j-1}%
\right)  }{\varphi_{j-1}\left(  t_{j}-t_{j-1}\right)  }e^{pt}dt\text{ and
}\sigma_{j,j+1}=\int_{t_{j}}^{t_{j+1}}\frac{\varphi_{j}\left(  t-t_{j+1}%
\right)  }{\varphi_{j}\left(  t_{j}-t_{j+1}\right)  }e^{pt}dt.
\]
Now (\ref{eqBoundM}) follows from the fact that
\[
\frac{\sigma_{j,j+1}}{c_{j+j}}=\varphi_{j}\left(  t_{j+1}-t_{j}\right)
\frac{\int_{t_{j}}^{t_{j+1}}\varphi_{j}\left(  t-t_{j}\right)  e^{pt}dt}%
{\int_{t_{j}}^{t_{j+1}}\varphi_{j}\left(  t-t_{j}\right)  ^{2}e^{pt}%
dt}=C_{\varphi_{j}}\left(  t_{j+1}-t_{j}\right)  ,
\]
and $\frac{\sigma_{j,j+1}}{d_{j}}=D_{\varphi_{j}}\left(  t_{j+1}-t_{j}\right)
$ for $j=1,...,n-1$.
\end{proof}

\section{\label{S6}Estimate for the best $L^{2}$-approximation by piecewise
exponential splines of order $2$}

Let us recall the basic assumptions (i) and (ii) from Section 3, and add a new feature:

\begin{enumerate}
\item[(i)] Let $\delta>0.$ We assume that for the real numbers $\lambda
_{0,j}\leq\lambda_{1,j}$ the functions $\varphi_{j}:=\Phi_{\left(
\lambda_{0,j},\lambda_{1,j}\right)  }$, $j=1,...,n-1,$ are increasing on
$\left[  -\delta,\delta\right]  .$

\item[(ii)] For given $t_{1}<...<t_{n}$ with $\left\vert t_{j+1}%
-t_{j}\right\vert \leq\delta,$ for $j=1,..,n-1$ the corresponding hat
functions are denoted by $H_{1},....,H_{n}$, and their linear span is denoted
by $\mathcal{H}_{n}$.

\item[(iii)] For a real number $p$ we define on $C\left[  t_{1},t_{n}\right]
$ the inner product%
\[
\left\langle f,g\right\rangle _{p}:=\int_{t_{1}}^{t_{n}}f\left(  t\right)
g\left(  t\right)  e^{pt}dt.
\]

\end{enumerate}

The motivation for the introduction of a weight $e^{pt}$ in (iii) originates
from consideration of polysplines on annuli where the weight depends on the
dimension of the euclidean space.

Define the functions $T_{\left(  \lambda_{0},\lambda_{1}\right)  }^{\left(
p\right)  }:\mathbb{R}\rightarrow\mathbb{R}$ and $S_{\left(  \lambda
_{0},\lambda_{1}\right)  }^{\left(  p\right)  }:\mathbb{R}\rightarrow
\mathbb{R}$ by putting
\begin{align}
T_{\left(  \lambda_{0},\lambda_{1}\right)  }^{\left(  p\right)  }  &
=\frac{1}{2}\frac{\Phi_{\left(  \lambda_{0},\lambda_{1},-p-\lambda
_{0},-p-\lambda_{1}\right)  }}{\Phi_{\left(  \lambda_{0}-\lambda_{1}%
,\lambda_{1}-\lambda_{0},0,-p-\lambda_{0}-\lambda_{1}\right)  }},\label{eqT}\\
S_{\left(  \lambda_{0},\lambda_{1}\right)  }^{\left(  p\right)  }  &
=\frac{1}{2}\frac{\Phi_{\left(  -\lambda_{0},-\lambda_{1}\right)  }%
\Phi_{\left(  \lambda_{0},\lambda_{1},-p\right)  }}{\Phi_{\left(  \lambda
_{0}-\lambda_{1},\lambda_{1}-\lambda_{0},0,-p-\lambda_{0}-\lambda_{1}\right)
}}. \label{eqS}%
\end{align}
Here we recall that the function $\Phi_{\left(  \lambda_{0},\lambda
_{1},...,\lambda_{n}\right)  }$ denotes the fundamental function, see formula
(\ref{defPhi}) in the Appendix.

The following Theorem \ref{ThmMain1} is the main result of the present section
which will be proved as a consequence of Proposition \ref{PropABCD}. After
that we will prove a simple explicit estimate of the operator norm $\left\Vert
P^{\mathcal{H}_{n}}\right\Vert _{\text{op}}$ for sufficiently small grid sizes
$\max\left\vert t_{j+1}-t_{j}\right\vert .$ In the second part of the section
we will give estimates of the functions defined in (\ref{eqT}) and (\ref{eqS})
for different configurations of the constants $\left\{  \lambda_{0,j}%
,\lambda_{1,j}\right\}  $ and give an explicit form of the estimate of Theorem
\ref{ThmMain1}.

\begin{theorem}
\label{ThmMain1}In addition to (i)--(iii) assume that
\begin{equation}
c\left(  \delta\right)  :=\sup_{\left\vert h\right\vert \leq\delta}%
\max_{j=1,...,n-1}\left\vert T_{\left(  \lambda_{0,j},\lambda_{1,j}\right)
}^{\left(  p\right)  }\left(  h\right)  \right\vert <1. \label{eqFormulac}%
\end{equation}
Then for any partition $t_{1}<...<t_{n}$ with $t_{j+1}-t_{j}\leq\delta$ the
matrix $\left(  s_{i,j}\right)  _{i,j=1,...,n}$ defined by $s_{i,j}%
=\left\langle H_{i},H_{j}\right\rangle _{p}$ is row diagonally dominant and
\begin{equation}
\left\Vert P^{\mathcal{H}_{n}}\right\Vert _{\text{op}}\leq\max_{t\in\left[
t_{1},t_{n}\right]  }\sum_{j=1}^{n}\left\vert H_{j}\left(  t\right)
\right\vert \frac{\sup_{\left\vert h\right\vert \leq\delta}\max_{j=1,...,n-1}%
\left\vert S_{\left(  \lambda_{0,j},\lambda_{1,j}\right)  }^{\left(  p\right)
}\left(  h\right)  \right\vert }{1-c\left(  \delta\right)  }.
\label{eqoperatornorm2}%
\end{equation}

\end{theorem}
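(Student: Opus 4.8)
The plan is to deduce Theorem~\ref{ThmMain1} from Proposition~\ref{PropABCD}. By (i) the hat functions $H_1,\dots,H_n$ attached to $\varphi_j=\Phi_{(\lambda_{0,j},\lambda_{1,j})}$ are well defined, so Proposition~\ref{PropABCD} applies and everything reduces to identifying the quantities $A_{\varphi_j}$, $B_{\varphi_j}$, $C_{\varphi_j}$, $D_{\varphi_j}$ of (\ref{eqA})--(\ref{eqD}) with the functions $T^{(p)}_{(\lambda_0,\lambda_1)}$ and $S^{(p)}_{(\lambda_0,\lambda_1)}$. Writing $\varphi=\Phi_{(\lambda_0,\lambda_1)}$ for a fixed admissible pair $\lambda_0\le\lambda_1$, I would prove the four identities
\begin{align*}
A_{\varphi}(h)&=T^{(p)}_{(\lambda_0,\lambda_1)}(h), & B_{\varphi}(h)&=T^{(p)}_{(\lambda_0,\lambda_1)}(-h),\\
C_{\varphi}(h)&=S^{(p)}_{(\lambda_0,\lambda_1)}(h), & D_{\varphi}(h)&=S^{(p)}_{(\lambda_0,\lambda_1)}(-h),
\end{align*}
valid for $0<h\le\delta$.

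For this I would use three elementary facts about the fundamental functions $\Phi_{(\nu_0,\dots,\nu_k)}$ recorded in the Appendix: the shift rule $e^{\mu t}\Phi_{(\nu_0,\dots,\nu_k)}(t)=\Phi_{(\nu_0+\mu,\dots,\nu_k+\mu)}(t)$ (multiplication by $e^{\mu t}$ being the translation $\Lambda\mapsto\Lambda+\mu$ of the frequency tuple), the antiderivative rule $\int_0^h\Phi_{(\nu_0,\dots,\nu_k)}(\tau)\,d\tau=\Phi_{(0,\nu_0,\dots,\nu_k)}(h)$, and the reflection rule $\Phi_{(\nu_0,\dots,\nu_k)}(-t)=(-1)^k\Phi_{(-\nu_0,\dots,-\nu_k)}(t)$ (generalising (\ref{eqsym})), together with the product and recursion formulas for $\Phi$ from the Appendix (in particular (\ref{rec0})). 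The key computation is the evaluation of the weighted integrals entering (\ref{eqA})--(\ref{eqD}):
\begin{align*}
\int_0^h\varphi(\tau)^2e^{p\tau}\,d\tau&=2\,e^{(\lambda_0+\lambda_1+p)h}\,\Phi_{(\lambda_0-\lambda_1,\,\lambda_1-\lambda_0,\,0,\,-p-\lambda_0-\lambda_1)}(h),\\
\int_0^h\varphi(\tau-h)\varphi(\tau)e^{p\tau}\,d\tau&=-e^{ph}\,\Phi_{(\lambda_0,\,\lambda_1,\,-p-\lambda_0,\,-p-\lambda_1)}(h),\\
\int_0^h\varphi(\tau)e^{p\tau}\,d\tau&=e^{ph}\,\Phi_{(\lambda_0,\,\lambda_1,\,-p)}(h),
\end{align*}
together with the companion formulas obtained from these by the substitution $\tau\mapsto h-\tau$ and (\ref{eqsym}). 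Each is proved by noting that the relevant product of exponential polynomials, multiplied by $e^{p\tau}$, lies in an explicit space $E(\cdots)$, and that its primitive — after the shift and reflection rules are applied — is the stated third- or fourth-order fundamental function. Feeding these into (\ref{eqA})--(\ref{eqD}), using $\varphi(h)/\varphi(-h)=-e^{(\lambda_0+\lambda_1)h}$ (again (\ref{eqsym})) and cancelling exponential prefactors by the shift rule, reproduces exactly the ratios in (\ref{eqT}) and (\ref{eqS}); the degenerate frequency configurations (e.g.\ $\lambda_0=\lambda_1$, or a resonance such as $2\lambda_0+p=0$, where $\Phi$ acquires polynomial factors) are covered by continuity, both sides being analytic in $(\lambda_0,\lambda_1,p)$.

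Granting the four identities, the theorem is immediate. For a partition with $h_j:=t_{j+1}-t_j\le\delta$ we get $A_{\varphi_j}(h_j)=T^{(p)}_{(\lambda_{0,j},\lambda_{1,j})}(h_j)$ and $B_{\varphi_j}(h_j)=T^{(p)}_{(\lambda_{0,j},\lambda_{1,j})}(-h_j)$, both $\le\sup_{|h|\le\delta}|T^{(p)}_{(\lambda_{0,j},\lambda_{1,j})}(h)|$ since $0<h_j\le\delta$; hence hypothesis (\ref{eqFormulac}) gives $\max_{j}\{A_{\varphi_j}(h_j),B_{\varphi_j}(h_j)\}\le c(\delta)<1$, so (\ref{eqABc}) holds with $c=c(\delta)$ and Proposition~\ref{PropABCD} shows that $(s_{i,j})$ is row diagonally dominant and
\[
\bigl\Vert P^{\mathcal{H}_n}\bigr\Vert_{\mathrm{op}}\le\max_{t\in[t_1,t_n]}\sum_{j=1}^n|H_j(t)|\cdot\frac{\max_{j=1,\dots,n-1}\{C_{\varphi_j}(h_j),D_{\varphi_j}(h_j)\}}{1-c(\delta)}.
\]
Finally $C_{\varphi_j}(h_j)=S^{(p)}_{(\lambda_{0,j},\lambda_{1,j})}(h_j)$ and $D_{\varphi_j}(h_j)=S^{(p)}_{(\lambda_{0,j},\lambda_{1,j})}(-h_j)$ are each $\le\sup_{|h|\le\delta}\max_{j}|S^{(p)}_{(\lambda_{0,j},\lambda_{1,j})}(h)|$, and substituting this bound yields (\ref{eqoperatornorm2}).

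The main obstacle is the verification of the first two integral identities above. One must carry the weight $e^{p\tau}$ through a product of two second-order fundamental functions, choose the exact exponential factor by which to re-center, and recognise the result as a single fourth-order fundamental function bearing precisely the tuples $(\lambda_0,\lambda_1,-p-\lambda_0,-p-\lambda_1)$ and $(\lambda_0-\lambda_1,\lambda_1-\lambda_0,0,-p-\lambda_0-\lambda_1)$ demanded by (\ref{eqT}) and (\ref{eqS}). The sign bookkeeping — via repeated use of (\ref{eqsym}) — and the tracking of which exponential prefactor is absorbed into which factor is the error-prone part; once organised around the principle that multiplication by $e^{\mu t}$ translates frequency tuples, the rest is a direct appeal to Proposition~\ref{PropABCD}.
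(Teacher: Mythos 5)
Your proposal is correct and follows essentially the same route as the paper: reduce to Proposition~\ref{PropABCD} by identifying $A_{\varphi}(h)=T^{(p)}_{(\lambda_0,\lambda_1)}(h)$, $B_{\varphi}(h)=T^{(p)}_{(\lambda_0,\lambda_1)}(-h)$, $C_{\varphi}(h)=S^{(p)}_{(\lambda_0,\lambda_1)}(h)$, $D_{\varphi}(h)=S^{(p)}_{(\lambda_0,\lambda_1)}(-h)$ via the shift/reflection rules and the appendix integral identities. Your three displayed integrals agree with the paper's (\ref{eqApp1}), (\ref{eqId1}) and the formula following from (\ref{eqInt}) once the exponential prefactors are absorbed by the translation rule (\ref{eqTrans}); the only cosmetic difference is that the paper justifies the cross-term integral through the convolution identity of Theorem~\ref{ThmApp1} rather than by inspecting the primitive directly.
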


\begin{proof}
In view of Proposition \ref{PropABCD} it suffices to show that $A_{\varphi
}\left(  h\right)  =T_{\left(  \lambda_{0},\lambda_{1}\right)  }^{\left(
p\right)  }\left(  h\right)  $ and $B_{\varphi}\left(  h\right)  =T_{\left(
\lambda_{0},\lambda_{1}\right)  }^{\left(  p\right)  }\left(  -h\right)  $ for
any $h>0,$ and $C_{\varphi}\left(  h\right)  =S_{\left(  \lambda_{0}%
,\lambda_{1}\right)  }^{\left(  p\right)  }\left(  h\right)  $ and
$D_{\varphi}\left(  h\right)  =S_{\left(  \lambda_{0},\lambda_{1}\right)
}^{\left(  p\right)  }\left(  -h\right)  $ for any $h>0$ where $A_{\varphi}$,
$B_{\varphi}$, $C_{\varphi}$ and $D_{\varphi}$ are defined in (\ref{eqA}) to
(\ref{eqD}) for the function $\varphi=\Phi_{\left(  \lambda_{0},\lambda
_{1}\right)  }$.

In the sequel we often apply the following simple rules:
\begin{align}
e^{pt}\Phi_{\left(  \lambda_{0},....,\lambda_{N}\right)  }\left(  t\right)
&  =\Phi_{\left(  p+\lambda_{0},....,p+\lambda_{N}\right)  }\left(  t\right)
\label{eqTrans}\\
\Phi_{\left(  \lambda_{0},...,\lambda_{N}\right)  }\left(  -t\right)   &
=\left(  -1\right)  ^{N}\Phi_{\left(  -\lambda_{0},...,-\lambda_{N}\right)
}\left(  t\right) \label{eqMinus}\\
\Phi_{\left(  \lambda_{0},...,\lambda_{N}\right)  }\left(  pt\right)   &
=p^{N}\Phi_{\left(  p\lambda_{0},...,p\lambda_{N}\right)  }\left(  t\right)
\label{eqmultiple}%
\end{align}
Then (\ref{eqMinus}) and (\ref{eqTrans}) shows that
\[
\int_{0}^{h}\Phi_{\left(  \lambda_{0},\lambda_{1}\right)  }\left(  t-h\right)
\Phi_{\left(  \lambda_{0},\lambda_{1}\right)  }\left(  t\right)
e^{pt}dt=-\int_{0}^{h}\Phi_{\left(  -\lambda_{0},-\lambda_{1}\right)  }\left(
h-t\right)  \Phi_{\left(  p+\lambda_{0},p+\lambda_{1}\right)  }\left(
t\right)  dt.
\]
Theorem \ref{ThmApp1} in the appendix (applied to the right hand side) shows
that%
\begin{equation}
\int_{0}^{h}\Phi_{\left(  \lambda_{0},\lambda_{1}\right)  }\left(  t-h\right)
\Phi_{\left(  \lambda_{0},\lambda_{1}\right)  }\left(  t\right)
e^{pt}dt=-\Phi_{\left(  p+\lambda_{0},p+\lambda_{1},-\lambda_{0},-\lambda
_{1}\right)  }\left(  h\right)  . \label{eqApp1}%
\end{equation}
Further we have seen in (\ref{eqsym}) that
\begin{equation}
\frac{\Phi_{\left(  \lambda_{0},\lambda_{1}\right)  }\left(  h\right)  }%
{\Phi_{\left(  \lambda_{0},\lambda_{1}\right)  }\left(  -h\right)  }=-\frac
{1}{e^{-\left(  \lambda_{0}+\lambda_{1}\right)  h}}. \label{eqsymNew}%
\end{equation}
Now (\ref{eqApp1}), and the identity (\ref{eqId1}) in the appendix show that
\[
A_{\varphi}\left(  h\right)  =\frac{\varphi\left(  h\right)  }{\varphi\left(
-h\right)  }\frac{\int_{0}^{h}\varphi\left(  \tau-h\right)  \varphi\left(
\tau\right)  e^{p\tau}d\tau}{\int_{0}^{h}\left(  \varphi\left(  \tau\right)
\right)  ^{2}e^{p\tau}d\tau}=\frac{e^{-ph}\Phi_{\left(  p+\lambda
_{0},p+\lambda_{1},-\lambda_{0},-\lambda_{1}\right)  }\left(  h\right)
}{2e^{-\left(  \lambda_{0}+\lambda_{1}\right)  h}\Phi_{\left(  2\lambda
_{0},2\lambda_{1},\lambda_{0}+\lambda_{1},-p\right)  }\left(  h\right)  }.
\]
Apply the rule (\ref{eqTrans}) for the nominator and the denominator, then
\[
A_{\varphi}\left(  h\right)  =\frac{\Phi_{\left(  \lambda_{0},\lambda
_{1},-p-\lambda_{0},-p-\lambda_{1}\right)  }\left(  h\right)  }{2\Phi_{\left(
\lambda_{0}-\lambda_{1},\lambda_{1}-\lambda_{0},0,-p-\lambda_{0}-\lambda
_{1}\right)  }\left(  h\right)  }=T_{\left(  \lambda_{0},\lambda_{1}\right)
}^{\left(  p\right)  }\left(  h\right)  .
\]
Similarly, the definition of $B_{\varphi}\left(  h\right)  $ in (\ref{eqB})
together with (\ref{eqApp1}) and (\ref{eqId2}) shows that
\[
B_{\varphi}\left(  h\right)  =\frac{\Phi_{\left(  p+\lambda_{0},p+\lambda
_{1},-\lambda_{0},-\lambda_{1}\right)  }\left(  h\right)  }{2e^{\left(
\lambda_{0}+\lambda_{1}\right)  h}\Phi_{\left(  -2\lambda_{0},-2\lambda
_{1},-\lambda_{0}-\lambda_{1},p\right)  }\left(  h\right)  }=\frac
{\Phi_{\left(  p+\lambda_{0},p+\lambda_{1},-\lambda_{0},-\lambda_{1}\right)
}\left(  h\right)  }{2\Phi_{\left(  \lambda_{1}-\lambda_{0},\lambda
_{0}-\lambda_{1},0,p+\lambda_{0}+\lambda_{1}\right)  }\left(  h\right)  }%
\]
where we used (\ref{eqTrans}). Using (\ref{eqMinus}) we see that $T_{\left(
\lambda_{0},\lambda_{1}\right)  }^{\left(  p\right)  }\left(  -h\right)
=B\left(  h\right)  $ for $h>0$.

Note that $\Phi_{\left(  \lambda_{0},\lambda_{1}\right)  }\left(  t\right)
e^{pt}=\Phi_{\left(  p+\lambda_{0},p+\lambda_{1}\right)  }\left(  t\right)  $
by (\ref{eqTrans}), then we see using (\ref{eqInt}) and (\ref{eqId1}) in the
appendix
\[
C_{\varphi}\left(  h\right)  =\Phi_{\left(  \lambda_{0},\lambda_{1}\right)
}\left(  h\right)  \frac{\int_{0}^{h}\Phi_{\left(  \lambda_{0},\lambda
_{1}\right)  }\left(  t\right)  e^{pt}dt}{\int_{t_{0}}^{h}\Phi_{\left(
\lambda_{0},\lambda_{1}\right)  }\left(  t\right)  ^{2}e^{pt}dt}=\frac
{\Phi_{\left(  \lambda_{0},\lambda_{1}\right)  }\left(  h\right)
\Phi_{\left(  p+\lambda_{0},p+\lambda_{1},0\right)  }\left(  h\right)
}{2e^{pt}\Phi_{\left(  2\lambda_{0},2\lambda_{1},\lambda_{0}+\lambda
_{1},-p\right)  }\left(  h\right)  }.
\]
Multiply the nominator and denominator with $e^{-\left(  \lambda_{0}%
+\lambda_{1}\right)  h}$ and use (\ref{eqTrans}) to the nominator and
denominator:
\[
C_{\varphi}\left(  h\right)  =\frac{1}{2}\frac{\Phi_{\left(  -\lambda
_{0},-\lambda_{1}\right)  }\left(  h\right)  \Phi_{\left(  \lambda_{0}%
,\lambda_{1},-p\right)  }\left(  h\right)  }{\Phi_{\left(  \lambda_{0}%
-\lambda_{1},\lambda_{1}-\lambda_{0},0,-p-\lambda_{0}-\lambda_{1}\right)
}\left(  h\right)  }=S_{\left(  \lambda_{0},\lambda_{1}\right)  }^{\left(
p\right)  }\left(  h\right)  .
\]
Further the identities (\ref{eqId3}) and (\ref{eqId2}) in the appendix and
(\ref{eqMinus}) show that
\[
D_{\varphi}\left(  h\right)  =\Phi_{\left(  \lambda_{0},\lambda_{1}\right)
}\left(  -h\right)  \frac{\int_{0}^{h}\Phi_{\left(  \lambda_{0},\lambda
_{1}\right)  }\left(  t-h\right)  e^{pt}dt}{\int_{0}^{h}\Phi_{\left(
\lambda_{0},\lambda_{1}\right)  }\left(  t-h\right)  ^{2}e^{pt}dt}=\frac
{\Phi_{\left(  \lambda_{0},\lambda_{1}\right)  }\left(  -h\right)
\Phi_{\left(  -\lambda_{0},-\lambda_{1},p\right)  }\left(  h\right)  }%
{2\Phi_{\left(  -2\lambda_{0},-2\lambda_{1},-\lambda_{0}-\lambda_{1},p\right)
}\left(  h\right)  }.
\]
Using similar arguments it follows that the last expression is equal to
$S_{\left(  \lambda_{0},\lambda_{1}\right)  }^{\left(  p\right)  }\left(
-h\right)  .$
\end{proof}

\begin{corollary}
Let $\lambda_{0,j},\lambda_{1,j}$ be real and define $\varphi_{j}\left(
t\right)  =\Phi_{\left(  \lambda_{0,j},\lambda_{1,j}\right)  }\left(
t\right)  $ for $j=1,...,n-1,$ and $p$ be real number. Then for any $\eta>0$
there exists $\delta>0$ such that for any partition $t_{1}<...<t_{n}$ with
$t_{j+1}-t_{j}\leq\delta$ the matrix $\left(  s_{i,j}\right)  _{i,j=1,...,n}$
defined by $s_{i,j}=\left\langle H_{i},H_{j}\right\rangle _{p}$ is row
diagonally dominant, and
\[
\left\Vert P^{\mathcal{H}_{n}}\right\Vert \leq6+\eta.
\]

\end{corollary}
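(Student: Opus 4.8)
The plan is to derive this from Theorem \ref{ThmMain1} by letting the grid size shrink to $0$ and reading off the limiting values of the quantities appearing in (\ref{eqoperatornorm2}).

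First I would observe that, since $\varphi_j=\Phi_{(\lambda_{0,j},\lambda_{1,j})}$ satisfies $\varphi_j(0)=0$ and $\varphi_j'(0)=1$, each $\varphi_j'$ is positive in a neighbourhood of $0$; as the index set $\{1,\dots,n-1\}$ is finite, there is $\delta_0>0$ with all $\varphi_j$ strictly increasing on $[-\delta_0,\delta_0]$ (for indices with $\lambda_{0,j}\le0\le\lambda_{1,j}$ this holds on all of $\mathbb{R}$ by Proposition \ref{PropIncreasing}). Hence for every $\delta\in(0,\delta_0]$ the standing assumptions (i)--(ii) of Theorem \ref{ThmMain1} are in force, so $\sum_{j=1}^{n}|H_j(t)|\le 2$ on $[t_1,t_n]$ by (\ref{eqestHj}); moreover $T^{(p)}_{(\lambda_{0,j},\lambda_{1,j})}$ and $S^{(p)}_{(\lambda_{0,j},\lambda_{1,j})}$ depend only on the fixed parameters, so the suprema over $|h|\le\delta$ in (\ref{eqFormulac}) and (\ref{eqoperatornorm2}) are finite maxima of continuous functions of $h$.

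The key step — and the only genuinely computational one — is to establish
\[
\lim_{h\to 0}T^{(p)}_{(\lambda_0,\lambda_1)}(h)=\tfrac12\qquad\text{and}\qquad\lim_{h\to 0}S^{(p)}_{(\lambda_0,\lambda_1)}(h)=\tfrac32 .
\]
By the proof of Theorem \ref{ThmMain1}, for $h>0$ one has $T^{(p)}_{(\lambda_0,\lambda_1)}(h)=A_\varphi(h)$, $T^{(p)}_{(\lambda_0,\lambda_1)}(-h)=B_\varphi(h)$, $S^{(p)}_{(\lambda_0,\lambda_1)}(h)=C_\varphi(h)$, $S^{(p)}_{(\lambda_0,\lambda_1)}(-h)=D_\varphi(h)$, with $A_\varphi,B_\varphi,C_\varphi,D_\varphi$ as in (\ref{eqA})--(\ref{eqD}) and $\varphi=\Phi_{(\lambda_0,\lambda_1)}$. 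In each of these I would substitute $\tau=hs$ and use that $\varphi(hs)/h\to s$ uniformly for $s\in[0,1]$ (valid since $\varphi\in C^{1}$, $\varphi(0)=0$, $\varphi'(0)=1$) together with $e^{p\tau}\to1$ uniformly on $[0,h]$; for instance
\[
\frac{1}{h^{3}}\int_{0}^{h}\varphi(\tau-h)\varphi(\tau)e^{p\tau}\,d\tau\to\int_{0}^{1}(s-1)s\,ds=-\tfrac16,\qquad\frac{1}{h^{3}}\int_{0}^{h}\varphi(\tau)^{2}e^{p\tau}\,d\tau\to\int_{0}^{1}s^{2}\,ds=\tfrac13 ,
\]
and $\varphi(h)/\varphi(-h)\to-1$, whence $A_\varphi(h)\to(-1)(-1/6)/(1/3)=1/2$; the analogous rescalings (dividing the relevant integrals by $h^{3}$ or $h^{2}$ and $\varphi(\pm h)$ by $h$) give $B_\varphi(h)\to1/2$, $C_\varphi(h)\to3/2$, $D_\varphi(h)\to3/2$. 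Thus each $T^{(p)}_{(\lambda_{0,j},\lambda_{1,j})}$ extends continuously to $h=0$ with value $1/2$ and each $S^{(p)}_{(\lambda_{0,j},\lambda_{1,j})}$ with value $3/2$ — these are just the $L$-spline analogues of the classical hat-function Gram computations, with the same limiting constants as in the polynomial case.

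Finally I would assemble the estimate: given $\eta>0$, choose $\varepsilon\in(0,1/2)$ so small that $\frac{2(3/2+\varepsilon)}{1/2-\varepsilon}\le 6+\eta$ (possible because the left-hand side tends to $6$ as $\varepsilon\downarrow0$), then, using the continuity above and the finiteness of $\{1,\dots,n-1\}$, pick $\delta\in(0,\delta_0]$ with $|T^{(p)}_{(\lambda_{0,j},\lambda_{1,j})}(h)|\le\frac12+\varepsilon$ and $|S^{(p)}_{(\lambda_{0,j},\lambda_{1,j})}(h)|\le\frac32+\varepsilon$ for all $j$ and all $|h|\le\delta$. Then $c(\delta)\le\frac12+\varepsilon<1$, so Theorem \ref{ThmMain1} applies to every partition with $t_{j+1}-t_j\le\delta$ (yielding in particular row diagonal dominance), and (\ref{eqoperatornorm2}) together with $\sum_{j}|H_j|\le2$ gives
\[
\|P^{\mathcal{H}_n}\|\le 2\cdot\frac{3/2+\varepsilon}{1-(1/2+\varepsilon)}=\frac{2(3/2+\varepsilon)}{1/2-\varepsilon}\le 6+\eta .
\]
The main obstacle is the uniform-rescaling limit in the preceding paragraph; everything else is bookkeeping.
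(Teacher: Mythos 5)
Your proof is correct and follows the same overall architecture as the paper's: establish that $T^{(p)}_{(\lambda_0,\lambda_1)}(h)\to\tfrac12$ and $S^{(p)}_{(\lambda_0,\lambda_1)}(h)\to\tfrac32$ as $h\to0$, then feed these limits into Theorem \ref{ThmMain1} together with $\sum_j|H_j|\le2$ and an $\varepsilon$-argument to get $2(\tfrac32+\varepsilon)/(\tfrac12-\varepsilon)\le 6+\eta$. The one place where you diverge is the computation of the two limits: the paper works with the closed forms (\ref{eqT}) and (\ref{eqS}) as ratios of fundamental functions and applies L'H\^opital (using $\Phi^{(N)}_{(\lambda_0,\dots,\lambda_N)}(0)=1$) together with Leibniz's rule for the triple product in $S^{(p)}$, whereas you go back to the raw Gram integrals $A_\varphi,B_\varphi,C_\varphi,D_\varphi$ of (\ref{eqA})--(\ref{eqD}), rescale by $\tau=hs$, and use $\varphi(hs)/h\to s$ uniformly to reduce everything to the classical polynomial hat-function integrals $\int_0^1 s(s-1)\,ds$, $\int_0^1 s^2\,ds$, etc. Your route is somewhat more elementary and self-contained (it never needs the explicit fundamental-function identities beyond $\varphi(0)=0$, $\varphi'(0)=1$, though it does lean on the identifications $A_\varphi=T^{(p)}$, $C_\varphi=S^{(p)}$ proved inside Theorem \ref{ThmMain1} to transfer the limits); the paper's route is shorter once those closed forms are available. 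Both are complete, and your handling of the remaining bookkeeping (finiteness of the index set, uniformity of the convergence, row diagonal dominance via $c(\delta)<1$) matches the paper's.
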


\begin{proof}
Using the rule of L'H\^{o}pital we see that for $h\rightarrow0$
\[
T_{\left(  \lambda_{0},\lambda_{1}\right)  }^{\left(  p\right)  }\left(
h\right)  =\frac{\Phi_{\left(  \lambda_{0},\lambda_{1},-p-\lambda
_{0},-p-\lambda_{1}\right)  }\left(  h\right)  }{2\Phi_{\left(  \lambda
_{0}-\lambda_{1},\lambda_{1}-\lambda_{0},0,-p-\lambda_{0}-\lambda_{1}\right)
}\left(  h\right)  }\rightarrow\frac{1}{2}.
\]
Using Leibniz's rule and the fact that $\Phi_{\left(  \lambda_{0},\lambda
_{1}\right)  }\left(  0\right)  =0$ and $\Phi_{\left(  \lambda_{0},\lambda
_{1},-p\right)  }\left(  0\right)  =\Phi_{\left(  \lambda_{0},\lambda
_{1},-p\right)  }^{\prime}\left(  0\right)  =0$ we have
\[
\lim_{t\rightarrow0}\frac{d^{3}}{dt^{3}}\left(  \Phi_{\left(  \lambda
_{0},\lambda_{1}\right)  }\left(  t\right)  \Phi_{\left(  \lambda_{0}%
,\lambda_{1},-p\right)  }\left(  t\right)  \right)  =3\Phi_{\left(
\lambda_{0},\lambda_{1}\right)  }^{\prime}\left(  0\right)  \Phi_{\left(
\lambda_{0},\lambda_{1},-p\right)  }^{\prime\prime}\left(  0\right)  =3.
\]
Since $\Phi_{\left(  2\lambda_{0},2\lambda_{1},\lambda_{0}+\lambda
_{1},-p\right)  }^{\left(  3\right)  }\left(  0\right)  =1$ we conclude
\[
\lim_{t\rightarrow0}S_{\left(  \lambda_{0},\lambda_{1}\right)  }^{\left(
p\right)  }\left(  t\right)  =\frac{3}{2}.
\]
Now we apply Theorem \ref{ThmMain1}: Take $\delta>0$ so small such that
$\varphi_{j}\left(  t\right)  $ are increasing on $\left[  -\delta
,\delta\right]  .$ Then $\sum_{j=1}^{n}\left\vert H_{j}\left(  t\right)
\right\vert \leq2$ by Proposition \ref{PropHatBasics}. By choosing $\delta>0$
small enough we may assume that $\left\vert S_{\left(  \lambda_{0,j}%
,\lambda_{1,j}\right)  }^{\left(  p\right)  }\left(  h\right)  \right\vert
\leq\varepsilon+3/2$ and $c\left(  \delta\right)  \leq\varepsilon+\frac{1}{2}$
for all $0\leq\left\vert h\right\vert \leq\delta$ and $j=1,...,n-1$ where
$c\left(  \delta\right)  $ is defined in (\ref{eqFormulac}). Then $1-c\left(
\delta\right)  \geq\frac{1}{2}-\varepsilon$ and (\ref{eqoperatornorm2}) shows
that
\[
\left\Vert P^{\mathcal{H}_{n}}\right\Vert _{\text{op}}\leq\frac{2}{\frac{1}%
{2}-\varepsilon}\left(  \frac{3}{2}+\frac{1}{2}\varepsilon\right)  .
\]
The right hand side of the last equation converges to $6$ for $\varepsilon
\rightarrow0,$ and now the claim is immediate.
\end{proof}

The following lemma is well known and we include the simple proof for completenessL\ 

\begin{lemma}
\label{LemPos1}Assume that $f:\mathbb{R}\rightarrow\mathbb{R}$ is
differentiable. If $D_{\lambda}f\left(  t\right)  >0$ for $t>a$ and $f\left(
a\right)  \geq0$ then $f\left(  t\right)  $ is positive for all $t>a.$ If
$D_{\lambda}f\left(  t\right)  <0$ for $t<a$ and $f\left(  a\right)  \leq0$
then $f\left(  t\right)  $ is negative for all $t<a.$
\end{lemma}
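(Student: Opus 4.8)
The plan is to reduce everything to the monotonicity of an auxiliary function obtained by the classical integrating-factor trick for the first-order operator $D_{\lambda}=d/dt-\lambda$. Put $g\left(  t\right)  :=e^{-\lambda t}f\left(  t\right)  $; this is differentiable because $f$ is, and $g^{\prime}\left(  t\right)  =e^{-\lambda t}\left(  f^{\prime}\left(  t\right)  -\lambda f\left(  t\right)  \right)  =e^{-\lambda t}D_{\lambda}f\left(  t\right)  $. Since $e^{-\lambda t}>0$ for every $t$, the sign of $g^{\prime}\left(  t\right)  $ agrees pointwise with the sign of $D_{\lambda}f\left(  t\right)  $, and $f\left(  t\right)  =e^{\lambda t}g\left(  t\right)  $ has, pointwise, the sign of $g\left(  t\right)  $. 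So the hypotheses about $D_{\lambda}f$ become statements about where $g$ is increasing or decreasing, and the conclusions about $f$ become statements about the sign of $g$.

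For the first assertion, $D_{\lambda}f>0$ on $\left(  a,\infty\right)  $ gives $g^{\prime}>0$ on $\left(  a,\infty\right)  $, so by the mean value theorem $g$ is strictly increasing on $\left(  a,\infty\right)  $; since $g$ is continuous at $a$ this extends to strict increase on $\left[  a,\infty\right)  $. Hence for $t>a$ we get $g\left(  t\right)  >g\left(  a\right)  =e^{-\lambda a}f\left(  a\right)  \geq0$, and therefore $f\left(  t\right)  =e^{\lambda t}g\left(  t\right)  >0$, which is the claim. Note that only differentiability of $f$ (not continuity of $f^{\prime}$) is used, since the mean value theorem already turns a fixed sign of $g^{\prime}$ on an interval into strict monotonicity there.

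The second assertion is handled by the same device on the half-line to the left of $a$: the sign hypothesis on $D_{\lambda}f$ makes $g^{\prime}$ of one fixed sign on $\left(  -\infty,a\right)  $, hence $g$ strictly monotone on $\left(  -\infty,a\right]  $, and comparing $g\left(  t\right)  $ with the boundary value $g\left(  a\right)  =e^{-\lambda a}f\left(  a\right)  $, whose sign is that of $f\left(  a\right)  $, pins down the sign of $g$, and hence of $f$, throughout that half-line; alternatively one may just apply the first assertion to $-f$, using $D_{\lambda}\left(  -f\right)  =-D_{\lambda}f$. I do not expect any serious obstacle here: this is a textbook comparison principle for $D_{\lambda}$. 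The only points deserving a moment's attention are that it is the sign of $D_{\lambda}f$, not of $f^{\prime}$, that governs the monotonicity of $g$ — precisely what the factor $e^{-\lambda t}$ is arranged to achieve — and that one should check that the direction of the inequality assumed on $D_{\lambda}f$ is the one matching the monotonicity of $g$ that actually results, so that the comparison with $g\left(  a\right)  $ goes the right way.
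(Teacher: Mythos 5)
Your proof of the first assertion is correct and is precisely the paper's argument: the integrating factor $g\left(t\right)=e^{-\lambda t}f\left(t\right)$ converts the sign of $D_{\lambda}f$ into the sign of $g^{\prime}$, strict monotonicity of $g$ on $\left[a,\infty\right)$ follows, and $g\left(t\right)>g\left(a\right)=e^{-\lambda a}f\left(a\right)\geq0$ yields $f\left(t\right)>0$ for $t>a$.

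The second assertion is where there is a genuine gap, and it sits exactly at the point you flagged but did not carry out: the direction of the comparison with $g\left(a\right)$. From $D_{\lambda}f\left(t\right)<0$ for $t<a$ you get $g^{\prime}<0$ on $\left(-\infty,a\right)$, hence $g$ strictly \emph{decreasing} on $\left(-\infty,a\right]$, hence $g\left(t\right)>g\left(a\right)$ for $t<a$; since $g\left(a\right)=e^{-\lambda a}f\left(a\right)\leq0$, the inequality $g\left(t\right)>g\left(a\right)$ gives no control on the sign of $g\left(t\right)$. In fact the second statement is false as written: take $\lambda=0$, $a=0$, $f\left(t\right)=-t$; then $D_{0}f=f^{\prime}\equiv-1<0$ and $f\left(0\right)=0\leq0$, yet $f\left(t\right)=-t>0$ for all $t<0$. (The paper itself only says the second statement is ``proven in a similar way,'' so it does not catch this either.) The statement that genuinely does follow in a similar way is: if $D_{\lambda}f\left(t\right)>0$ for $t<a$ and $f\left(a\right)\leq0$, then $f\left(t\right)<0$ for all $t<a$; here $g$ is increasing on $\left(-\infty,a\right]$, so $g\left(t\right)<g\left(a\right)\leq0$. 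Your proposed shortcut of applying the first assertion to $-f$ does not repair this either, because the first assertion concerns the half-line $t>a$, not $t<a$; to transport it you would additionally need the reflection $t\mapsto2a-t$, which replaces $D_{\lambda}$ by $D_{-\lambda}$ and again changes which inequality on $D_{\lambda}f$ is required.
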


\begin{proof}
Define $g\left(  t\right)  =e^{-\lambda t}f\left(  t\right)  .$ Then
$g^{\prime}\left(  t\right)  =e^{-\lambda t}D_{\lambda}f\left(  t\right)  >0$
for $t>a.$ Hence $g$ is strictly increasing for $t>a,$ and since $g\left(
a\right)  =e^{-\lambda a}f\left(  a\right)  \geq0$ we infer that $g\left(
t\right)  >0.$ The second statement is proven in a similar way.
\end{proof}

\begin{theorem}
\label{ThmSbound}Assume that $\lambda_{0}\leq\lambda_{1}$ are real numbers.
Then $S_{\left(  \lambda_{0},\lambda_{1}\right)  }^{\left(  0\right)  }\left(
t\right)  \leq2$ for all real numbers $t.$
\end{theorem}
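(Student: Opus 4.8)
The plan is first to reduce the statement to the inequality $C_{\varphi}(h)\le 2$ for $h>0$, where $\varphi=\Phi_{\left(\lambda_{0},\lambda_{1}\right)}$ and $C_{\varphi}$ is the function (\ref{eqA})--(\ref{eqD}) specialised to $p=0$, and then to prove this reduced inequality by a monotonicity argument. For the reduction, applying (\ref{eqMinus}) to the defining expression (\ref{eqS}) at $p=0$ one checks that $S_{\left(\lambda_{0},\lambda_{1}\right)}^{\left(0\right)}\left(-t\right)=S_{\left(-\lambda_{1},-\lambda_{0}\right)}^{\left(0\right)}\left(t\right)$; since $-\lambda_{1}\le-\lambda_{0}$, it therefore suffices to bound $S_{\left(\lambda_{0},\lambda_{1}\right)}^{\left(0\right)}$ on $[0,\infty)$. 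At $t=0$ the value is $3/2$ (the limit of the removable singularity, as computed in the proof of the corollary to Theorem \ref{ThmMain1}), and for $h>0$ the proof of Theorem \ref{ThmMain1} with $p=0$ shows $S_{\left(\lambda_{0},\lambda_{1}\right)}^{\left(0\right)}\left(h\right)=C_{\varphi}\left(h\right)=\varphi\left(h\right)\int_{0}^{h}\varphi\ \big/\int_{0}^{h}\varphi^{2}$. As $3/2<2$, it remains only to prove $C_{\varphi}\left(h\right)\le 2$ for all $h>0$.

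To this end I would fix $h>0$ and note that, since $\int_{0}^{h}\varphi^{2}>0$, the bound $C_{\varphi}\left(h\right)\le 2$ is equivalent to $g\left(h\right)\ge 0$, where $g\left(t\right):=2\int_{0}^{t}\varphi^{2}-\varphi\left(t\right)\int_{0}^{t}\varphi$. Because $g\left(0\right)=0$, it is enough to show $g'\left(t\right)\ge 0$ on $\left(0,\infty\right)$. Differentiating, $g'\left(t\right)=\varphi\left(t\right)^{2}-\varphi'\left(t\right)\Theta\left(t\right)$ with $\Theta\left(t\right):=\int_{0}^{t}\varphi$ (this is the fundamental function $\Phi_{\left(\lambda_{0},\lambda_{1},0\right)}$); and since $\varphi=\Phi_{\left(\lambda_{0},\lambda_{1}\right)}$ has no zero on $\left(0,\infty\right)$, one may write $g'\left(t\right)=\varphi\left(t\right)^{2}G'\left(t\right)$ with $G:=\Theta/\varphi$. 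Thus the whole theorem reduces to the assertion that $G$ is non-decreasing on $\left(0,\infty\right)$ --- equivalently, that $\Phi_{\left(\lambda_{0},\lambda_{1},0\right)}$ is log-concave there.

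For the last point, the substitution $u=t-s$ gives $G\left(t\right)=\int_{0}^{t}\varphi\left(t-u\right)/\varphi\left(t\right)\,du$, and since $\varphi\left(0\right)=0$ the Leibniz boundary term vanishes, so $G'\left(t\right)=\int_{0}^{t}\partial_{t}\bigl(\varphi\left(t-u\right)/\varphi\left(t\right)\bigr)\,du$. Hence it remains to verify that, for each fixed $u\in\left(0,t\right)$, the map $t\mapsto\varphi\left(t-u\right)/\varphi\left(t\right)$ is non-decreasing. When $\lambda_{0}<\lambda_{1}$, writing $s=\lambda_{0}+\lambda_{1}$ and $d=\lambda_{1}-\lambda_{0}>0$ one has $\varphi\left(t\right)=\tfrac{2}{d}e^{st/2}\sinh\left(dt/2\right)$, whence
\[
\frac{\varphi\left(t-u\right)}{\varphi\left(t\right)}=e^{-su/2}\Bigl(\cosh\tfrac{du}{2}-\sinh\tfrac{du}{2}\cdot\coth\tfrac{dt}{2}\Bigr),
\]
which is non-decreasing in $t$ because $a\mapsto\coth a$ is strictly decreasing on $\left(0,\infty\right)$ and $\sinh\left(du/2\right)>0$; the degenerate case $\lambda_{0}=\lambda_{1}=\lambda$ is immediate from $\varphi\left(t-u\right)/\varphi\left(t\right)=e^{-\lambda u}\left(1-u/t\right)$ (with $e^{-\lambda u}$ read as $1$ when $\lambda=0$). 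This gives $G'\ge 0$, hence $g'\ge 0$ and $g\ge g\left(0\right)=0$, hence $C_{\varphi}\left(h\right)\le 2$, as required. I do not expect a genuine obstacle here: the one point that needs care is the bookkeeping in the first step (correctly transcribing the three facts $S^{\left(0\right)}_{\left(\lambda_{0},\lambda_{1}\right)}\left(-t\right)=S^{\left(0\right)}_{\left(-\lambda_{1},-\lambda_{0}\right)}\left(t\right)$, $S^{\left(0\right)}\left(h\right)=C_{\varphi}\left(h\right)$, and $S^{\left(0\right)}\left(0\right)=3/2$ from the material around Theorem \ref{ThmMain1}), while the analytic core --- the monotonicity of $t\mapsto\varphi\left(t-u\right)/\varphi\left(t\right)$, i.e. the log-concavity of $\Phi_{\left(\lambda_{0},\lambda_{1},0\right)}$ --- is completely elementary once it is isolated.
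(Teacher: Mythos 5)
Your proof is correct, and it takes a genuinely different route from the paper's. The paper argues directly at the level of exponential polynomials: with $C=2$ it forms
$F_{0}=4\Phi_{\left(  \lambda_{0}-\lambda_{1},\lambda_{1}-\lambda_{0},0,-\lambda_{0}-\lambda_{1}\right)  }-\Phi_{\left(  -\lambda_{0},-\lambda_{1}\right)  }\Phi_{\left(  \lambda_{0},\lambda_{1},0\right)  }$,
applies $D_{-\lambda_{0}-\lambda_{1}}\circ\frac{d}{dt}$ to collapse it to $G_{0}=\left(  \lambda_{1}-\lambda_{0}\right)  \Phi_{\left(  -\lambda_{0},-\lambda_{1}\right)  }>0$, and climbs back up with Lemma \ref{LemPos1}; negative $t$ is then handled by a separate sign analysis of the three factors. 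You instead pull the statement back through the identity $S_{\left(  \lambda_{0},\lambda_{1}\right)  }^{\left(  0\right)  }\left(  h\right)  =C_{\varphi}\left(  h\right)  $ established in the proof of Theorem \ref{ThmMain1} and prove the equivalent integral inequality $\varphi\left(  h\right)  \int_{0}^{h}\varphi\leq2\int_{0}^{h}\varphi^{2}$ by showing that $\Phi_{\left(  \lambda_{0},\lambda_{1},0\right)  }/\Phi_{\left(  \lambda_{0},\lambda_{1}\right)  }$ is non-decreasing on $\left(  0,\infty\right)  $, which you reduce to the monotonicity of $\coth$. All the steps check out: the symmetry $S_{\left(  \lambda_{0},\lambda_{1}\right)  }^{\left(  0\right)  }\left(  -t\right)  =S_{\left(  -\lambda_{1},-\lambda_{0}\right)  }^{\left(  0\right)  }\left(  t\right)  $ does follow from (\ref{eqMinus}) and the symmetry of $\Phi$ in its frequencies, so your reduction to $t\geq0$ is legitimate and tidier than the paper's treatment of $t<0$; the identity $g^{\prime}=\varphi^{2}\left(  \Theta/\varphi\right)  ^{\prime}$ is right; and the positivity of $\varphi$ on $\left(  0,\infty\right)  $ (Proposition \ref{PropPhipositive}) justifies the divisions and the vanishing of the Leibniz boundary term. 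What your route buys is a more conceptual, essentially computation-free core (log-concavity of $\Phi_{\left(  \lambda_{0},\lambda_{1},0\right)  }$), at the price of being tied to $p=0$: a nontrivial weight would place $\Phi_{\left(  p+\lambda_{0},p+\lambda_{1}\right)  }$ in the numerator of your ratio and spoil the clean $\sinh$ cancellation, whereas the paper's machinery ($F_{p}$, $G_{p}$, Lemma \ref{Lem1App}) is set up uniformly in $p$ before specializing and is the same machinery reused for Theorem \ref{ThmMain2}.
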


\begin{proof}
We assume that $\lambda_{0}<\lambda_{1},$ and at first we include the case of
a real number $p.$ In order to prove that $S_{\left(  \lambda_{0},\lambda
_{1}\right)  }^{\left(  p\right)  }\left(  t\right)  \leq C$ for all $t>0$ it
suffices to show
\[
F_{p}\left(  t\right)  :=2C\Phi_{\left(  \lambda_{0}-\lambda_{1},\lambda
_{1}-\lambda_{0},0,-p-\lambda_{0}-\lambda_{1}\right)  }\left(  t\right)
-\Phi_{\left(  -\lambda_{0},-\lambda_{1}\right)  }\left(  t\right)
\Phi_{\left(  \lambda_{0},\lambda_{1},-p\right)  }\left(  t\right)
\]
is non-negative for $t>0.$ Using (\ref{eqTrans}) we can rewrite
\begin{align*}
\Phi_{\left(  -\lambda_{0},-\lambda_{1}\right)  }\left(  t\right)
\Phi_{\left(  \lambda_{0},\lambda_{1},-p\right)  }\left(  t\right)   &
=\frac{e^{-\lambda_{0}t}-e^{-\lambda_{1}t}}{\lambda_{1}-\lambda_{0}}%
\Phi_{\left(  \lambda_{0},\lambda_{1},-p\right)  }\left(  t\right) \\
&  =\frac{\Phi_{\left(  0,\lambda_{1}-\lambda_{0},-\lambda_{0}-p\right)
}\left(  t\right)  -\Phi_{\left(  \lambda_{0}-\lambda_{1},0,-\lambda
_{1}-p\right)  }\left(  t\right)  }{\lambda_{1}-\lambda_{0}}.
\end{align*}
Using (\ref{rec0}) in the appendix it follows that%
\[
F_{p}^{\prime}=2C\Phi_{\left(  \lambda_{0}-\lambda_{1},\lambda_{1}-\lambda
_{0},-p-\lambda_{0}-\lambda_{1}\right)  }-\frac{\Phi_{\left(  \lambda
_{1}-\lambda_{0},-\lambda_{0}-p\right)  }-\Phi_{\left(  \lambda_{0}%
-\lambda_{1},-\lambda_{1}-p\right)  }}{\lambda_{1}-\lambda_{0}}.
\]
Next we consider
\[
G_{p}:=\left(  \lambda_{1}-\lambda_{0}\right)  D_{\left(  -p-\lambda
_{0}-\lambda_{1}\right)  }F_{p}^{\prime}=2C\left(  \lambda_{1}-\lambda
_{0}\right)  \Phi_{\left(  \lambda_{0}-\lambda_{1},\lambda_{1}-\lambda
_{0}\right)  }-R
\]
where we define%
\[
R=D_{\left(  -p-\lambda_{0}-\lambda_{1}\right)  }\Phi_{\left(  \lambda
_{1}-\lambda_{0},-\lambda_{0}-p\right)  }-D_{\left(  -p-\lambda_{0}%
-\lambda_{1}\right)  }\Phi_{\left(  \lambda_{0}-\lambda_{1},-\lambda
_{1}-p\right)  }.
\]
Then
\begin{align*}
R  &  =D_{\left(  -p-\lambda_{0}\right)  }\Phi_{\left(  \lambda_{1}%
-\lambda_{0},-\lambda_{0}-p\right)  }+\lambda_{1}\Phi_{\left(  \lambda
_{1}-\lambda_{0},-\lambda_{0}-p\right)  }\\
&  -D_{\left(  -p-\lambda_{1}\right)  }\Phi_{\left(  \lambda_{0}-\lambda
_{1},-\lambda_{1}-p\right)  }-\lambda_{0}\Phi_{\left(  \lambda_{0}-\lambda
_{1},-\lambda_{1}-p\right)  }\\
&  =\Phi_{\left(  \lambda_{1}-\lambda_{0}\right)  }-\Phi_{\left(  \lambda
_{0}-\lambda_{1}\right)  }+\lambda_{1}\Phi_{\left(  \lambda_{1}-\lambda
_{0},-\lambda_{0}-p\right)  }-\lambda_{0}\Phi_{\left(  \lambda_{0}-\lambda
_{1},-\lambda_{1}-p\right)  }.
\end{align*}
Since $\Phi_{\left(  \lambda_{0}-\lambda_{1},\lambda_{1}-\lambda_{0}\right)
}\left(  t\right)  =\frac{e^{\left(  \lambda_{1}-\lambda_{0}\right)
t}-e^{-\left(  \lambda_{1}-\lambda_{0}\right)  t}}{2\left(  \lambda
_{1}-\lambda_{0}\right)  }$ we obtain the following formula:
\begin{align*}
G_{p}\left(  t\right)   &  =\left(  C-1\right)  e^{\left(  \lambda_{1}%
-\lambda_{0}\right)  t}-\left(  C-1\right)  e^{\left(  \lambda_{0}-\lambda
_{1}\right)  t}\\
&  -\lambda_{1}\Phi_{\left(  \lambda_{1}-\lambda_{0},-\lambda_{0}-p\right)
}\left(  t\right)  +\lambda_{0}\Phi_{\left(  \lambda_{0}-\lambda_{1}%
,-\lambda_{1}-p\right)  }\left(  t\right)  .
\end{align*}
We specialize to the case $p=0$, and obtain the equality:
\begin{align*}
G_{0}\left(  t\right)   &  =\left(  C-1\right)  e^{\left(  \lambda_{1}%
-\lambda_{0}\right)  t}-\left(  C-1\right)  e^{-\left(  \lambda_{1}%
-\lambda_{0}\right)  t}\\
&  -\lambda_{1}\frac{e^{\left(  \lambda_{1}-\lambda_{0}\right)  t}%
-e^{-\lambda_{0}t}}{\lambda_{1}}+\lambda_{0}\frac{e^{\left(  \lambda
_{0}-\lambda_{1}\right)  t}-e^{-\lambda_{1}t}}{\lambda_{0}}\\
&  =\left(  C-2\right)  e^{\left(  \lambda_{1}-\lambda_{0}\right)  t}-\left(
C-2\right)  e^{-\left(  \lambda_{1}-\lambda_{0}\right)  t}+e^{-\lambda_{0}%
t}-e^{-\lambda_{1}t}.\\
&  =\left(  C-2\right)  \left(  \lambda_{1}-\lambda_{0}\right)  \Phi_{\left(
\lambda_{1}-\lambda_{0},\lambda_{1}-\lambda_{0}\right)  }+\left(  \lambda
_{1}-\lambda_{0}\right)  \Phi_{\left(  -\lambda_{0},-\lambda_{1}\right)
}\left(  t\right)  .
\end{align*}
If we take $C=2$ in this equation we see that $G_{0}\left(  t\right)  =\left(
\lambda_{1}-\lambda_{0}\right)  \Phi_{\left(  -\lambda_{0},-\lambda
_{1}\right)  }\left(  t\right)  >0$ for $t>0$. Thus $G_{0}\left(  t\right)  $
is positive for $t>0$ and $G_{0}\left(  0\right)  =0.$ Lemma \ref{LemPos1}
shows that $F_{0}\left(  t\right)  \geq0$ for all $t>0$, hence $S_{\left(
\lambda_{0},\lambda_{1}\right)  }^{\left(  0\right)  }\left(  t\right)  \leq2$
for all $t>0.$

For $t<0$ we see that $G_{0}\left(  t\right)  <0$ and Lemma \ref{LemPos1}
shows that $F_{0}^{\prime}\left(  t\right)  \leq0$ for all $t<0.$ Further
$F_{0}^{\prime}\left(  0\right)  =0,$ so $F_{0}\left(  t\right)  \leq0$ for
all $t<0.$ Since both $\Phi_{\left(  \lambda_{0}-\lambda_{1},\lambda
_{1}-\lambda_{0},0,-p-\lambda_{0}-\lambda_{1}\right)  }\left(  t\right)  $ and
$\Phi_{\left(  -\lambda_{0},-\lambda_{1}\right)  }\left(  t\right)  $ are
negative for $t<0$ and $\Phi_{\left(  \lambda_{0},\lambda_{1},-p\right)
}\left(  t\right)  >0$ for $t<0,$ it follows that $S_{\left(  \lambda
_{0},\lambda_{1}\right)  }^{\left(  0\right)  }\left(  t\right)  \leq2$ for
$t<0.$

The case $\lambda_{0}=\lambda_{1}$ follows from a continuity argument in the
variables $\lambda_{0},\lambda_{1}.$
\end{proof}

\begin{theorem}
\label{CorLebsgueSym}Let $\xi_{j}$ be real numbers and define $\varphi
_{j}\left(  t\right)  =\Phi_{\left(  \xi_{j},-\xi_{j}\right)  }\left(
t\right)  $ for $j=1,...,n-1.$Then the matrix $\left(  s_{i,j}\right)
_{i,j=1,...,n}$ defined by $s_{i,j}=\left\langle H_{i},H_{j}\right\rangle
_{0}$ is row diagonally dominant, and
\[
\left\Vert P^{\mathcal{H}_{n}}\right\Vert _{\text{op}}\leq4.
\]

\end{theorem}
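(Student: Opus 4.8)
The plan is to apply Theorem~\ref{ThmMain1} with weight parameter $p=0$ to the special symmetric configuration $\lambda_{0,j}=\xi_j$, $\lambda_{1,j}=-\xi_j$; but note that the roles of $\lambda_0$ and $\lambda_1$ are labeled so that $\lambda_{0,j}\le\lambda_{1,j}$, so concretely one takes $\lambda_{0,j}=-|\xi_j|$ and $\lambda_{1,j}=|\xi_j|$. Since $\lambda_{0,j}\le 0\le\lambda_{1,j}$, Proposition~\ref{PropIncreasing}(i) guarantees that $\varphi_j=\Phi_{(\lambda_{0,j},\lambda_{1,j})}$ is strictly increasing on all of $\mathbb{R}$, so assumption~(i) of Theorem~\ref{ThmMain1} holds for any $\delta>0$ (there is no restriction on grid size in the symmetric case). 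Moreover, again because $\lambda_{0,j}\le 0\le\lambda_{1,j}$, Proposition~\ref{PropSumHat} gives the sharper partition-of-unity bound $\max_{t\in[t_1,t_n]}\sum_{j=1}^n |H_j(t)|\le 1$, which is the first factor appearing on the right-hand side of (\ref{eqoperatornorm2}).

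Next I would control the two quantities $c(\delta)$ and $\sup_{|h|\le\delta}\max_j |S_{(\lambda_{0,j},\lambda_{1,j})}^{(0)}(h)|$ appearing in (\ref{eqoperatornorm2}). For the numerator, Theorem~\ref{ThmSbound} (applied with each pair $\lambda_{0,j}\le\lambda_{1,j}$, and with $p=0$) gives immediately $S_{(\lambda_{0,j},\lambda_{1,j})}^{(0)}(h)\le 2$ for all real $h$; by the symmetry $\Phi_{(\lambda_0,\lambda_1)}(-t)=-e^{-(\lambda_0+\lambda_1)t}\Phi_{(\lambda_0,\lambda_1)}(t)$ (here $\lambda_0+\lambda_1=0$, so $\Phi$ is odd) the function $S^{(0)}$ is even in the relevant sense, so the same bound $2$ holds for negative arguments as well; hence $\sup_{|h|\le\delta}\max_j|S_{(\lambda_{0,j},\lambda_{1,j})}^{(0)}(h)|\le 2$. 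For the denominator I need $c(\delta)=\sup_{|h|\le\delta}\max_j|T_{(\lambda_{0,j},\lambda_{1,j})}^{(0)}(h)|\le \tfrac12$, so that $1-c(\delta)\ge\tfrac12$. In the symmetric case $T_{(\xi,-\xi)}^{(0)}$ should simplify: with $\lambda_0=-\xi$, $\lambda_1=\xi$, $p=0$, formula (\ref{eqT}) reads $T=\tfrac12\,\Phi_{(-\xi,\xi,\xi,-\xi)}/\Phi_{(-2\xi,2\xi,0,0)}$, and one computes that this equals the expression $\tfrac12 T^{\ast}(2\xi h)$ for the function $T$ appearing near (\ref{eqOmegaMax}), up to the algebra already carried out there. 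The key estimate is that $A_\varphi(h),B_\varphi(h)\le\tfrac12$ for the symmetric fundamental function: this is precisely a diagonal-dominance computation, and in the symmetric case $A_\varphi(h)=B_\varphi(h)$ by the oddness of $\varphi$, so it reduces to a single one-variable inequality of the form $\varphi(h)\int_0^h\varphi(\tau-h)\varphi(\tau)\,d\tau\ge -\tfrac12\varphi(-h)\int_0^h\varphi(\tau)^2\,d\tau$, i.e.\ $\int_0^h\varphi(\tau)\varphi(h-\tau)\,d\tau\le\tfrac12\int_0^h\varphi(\tau)^2\,d\tau$ after using $\varphi(-s)=-\varphi(s)$; this follows from $2ab\le a^2+b^2$ applied pointwise under the integral with $a=\varphi(\tau)$, $b=\varphi(h-\tau)$ and the substitution symmetry $\tau\mapsto h-\tau$, which forces the two cross-integrals to be equal.

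Assembling the pieces: (\ref{eqoperatornorm2}) gives
\[
\|P^{\mathcal{H}_n}\|_{\text{op}}\le \Big(\max_{t}\sum_{j=1}^n|H_j(t)|\Big)\cdot\frac{\sup_{|h|\le\delta}\max_j|S_{(\lambda_{0,j},\lambda_{1,j})}^{(0)}(h)|}{1-c(\delta)}\le 1\cdot\frac{2}{1/2}=4,
\]
which is the claimed bound. One subtlety to check carefully is that Theorem~\ref{ThmMain1} requires $c(\delta)<1$ for \emph{some} admissible $\delta$, and here we want a bound uniform in the partition; since in the symmetric case the fundamental functions are increasing on all of $\mathbb{R}$ and the inequalities $A_\varphi,B_\varphi\le\tfrac12$ and $S^{(0)}\le 2$ hold for \emph{all} $h>0$ with no smallness assumption, we may take $\delta$ arbitrarily large and the estimate applies to every partition $t_1<\dots<t_n$.

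I expect the main obstacle to be the clean identification of $T_{(\xi,-\xi)}^{(0)}$ and the verification that it is bounded by $\tfrac12$ for all real arguments — the L'H\^opital computation near $h=0$ already shows the limiting value is $\tfrac12$, so the real content is showing the bound is an actual maximum, not merely a boundary value, which is where the pointwise AM--GM argument on $A_\varphi(h)=B_\varphi(h)$ (equivalently, on the integral cross-term versus the $L^2$ self-term of $\varphi$ on $[0,h]$) does the work. Everything else — the partition-of-unity improvement from Proposition~\ref{PropSumHat}, the bound $S^{(0)}\le 2$ from Theorem~\ref{ThmSbound}, and the final arithmetic — is routine given the machinery already developed.
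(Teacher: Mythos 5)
Your overall architecture coincides with the paper's: bound $\sum_j|H_j|\le 1$ via Proposition \ref{PropSumHat}, bound the numerator by $S^{(0)}\le 2$ via Theorem \ref{ThmSbound}, show $c(\delta)\le\tfrac12$, and assemble $1\cdot 2/(1-\tfrac12)=4$ through Theorem \ref{ThmMain1}. The reduction of $A_\varphi(h)=B_\varphi(h)$ (for odd $\varphi$) to the ratio $\int_0^h\varphi(\tau)\varphi(h-\tau)\,d\tau\big/\int_0^h\varphi(\tau)^2\,d\tau$ is also correct.

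However, the step where you claim this ratio is $\le\tfrac12$ by the pointwise inequality $2ab\le a^2+b^2$ is a genuine gap. That inequality gives
\[
\int_0^h\varphi(\tau)\varphi(h-\tau)\,d\tau\le\tfrac12\int_0^h\varphi(\tau)^2\,d\tau+\tfrac12\int_0^h\varphi(h-\tau)^2\,d\tau=\int_0^h\varphi(\tau)^2\,d\tau,
\]
i.e.\ the bound $1$, not $\tfrac12$ — you lose exactly a factor of $2$. No pointwise argument of this kind can recover it, because the target inequality is sharp in the limit $\xi\to 0$: for $\varphi(t)=t$ one has $\int_0^h\tau(h-\tau)\,d\tau=h^3/6=\tfrac12\int_0^h\tau^2\,d\tau$ exactly, and more generally the needed inequality is $4(h\cosh h-\sinh h)\le\sinh 2h-2h$, whose two sides agree to leading order $\tfrac43 h^3$ as $h\to 0$. (The inequality is also false for general increasing odd $\varphi$ with $\varphi(0)=0$, e.g.\ for $\varphi$ nearly constant on $(0,h]$ the ratio tends to $1$; so the convexity/structure of $\sinh$ must enter.) The paper closes this step by computing $\Phi_{(1,-1,1,-1)}(t)=\tfrac12(t\cosh t-\sinh t)$ and $\Phi_{(2,-2,0,0)}(t)=\tfrac18(\sinh 2t-2t)$, rescaling via $\Phi_{\xi\Lambda_N}(t)=\xi^{-N}\Phi_{\Lambda_N}(\xi t)$, and verifying the one-variable inequality $4(t\cosh t-\sinh t)\le\sinh 2t-2t$ directly; you would need to carry out this (or an equivalent) explicit verification rather than appeal to AM--GM.
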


\begin{proof}
By Proposition \ref{PropSumHat}, $\sum_{j=1}^{n}\left\vert H_{j}\left(
t\right)  \right\vert \leq1,$ and Theorem \ref{ThmSbound} shows that
$S_{\left(  \lambda_{0},\lambda_{1}\right)  }^{\left(  0\right)  }\left(
t\right)  \leq2$ for all real $t.$ Hence according to Theorem \ref{ThmMain1}
\[
\left\Vert P^{\mathcal{H}_{n}}\right\Vert _{\text{op}}\leq\frac{2}{1-c}.
\]
It suffices to show that for any real $\xi$ and real $t$
\[
T_{\left(  \xi,-\xi\right)  }^{\left(  0\right)  }\left(  t\right)  =\frac
{1}{2}\frac{\Phi_{\left(  \xi,-\xi,\xi,-\xi\right)  }}{\Phi_{\left(
2\xi,-2\xi,0,0\right)  }}\left(  t\right)  \leq\frac{1}{2}\text{.}%
\]
It is easily verified that
\[
\Phi_{\left(  1,-1,1,-1\right)  }\left(  t\right)  =\frac{1}{2}\left(  t\cosh
t-\sinh t\right)  \text{ and }\Phi_{\left(  2,-2,0,0\right)  }\left(
t\right)  =\frac{1}{8}\left(  \sinh\left(  2t\right)  -2t\right)  .
\]
Since $\Phi_{\xi\Lambda_{N}}\left(  t\right)  =\frac{1}{\xi^{N}}\Phi
_{\Lambda_{N}}\left(  \xi t\right)  $ it follows that
\[
0\leq\frac{\Phi_{\left(  \xi,-\xi,\xi,-\xi\right)  }}{\Phi_{\left(  2\xi
,-2\xi,0,0\right)  }}\left(  t\right)  =\frac{\Phi_{\left(  1,-1,1,-1\right)
}}{\Phi_{\left(  2,-2,0,0\right)  }}\left(  \xi t\right)  \text{ and }%
\frac{4\left(  t\cosh t-\sinh t\right)  }{\sinh\left(  2t\right)  -2t}\leq1
\]
The proof is complete.
\end{proof}

\begin{remark}
In the polynomial case (i.e. $\xi=0)$ the estimate of the operator norm can be
improved to $\left\Vert P^{\mathcal{H}_{n}}\right\Vert _{\text{op}}\leq3$
since in this case
\[
S_{\left(  0,0\right)  }^{\left(  0\right)  }\left(  h\right)  =\frac{1}%
{2}\frac{\Phi_{\left(  0,0\right)  }\left(  h\right)  \Phi_{\left(
0,0,0\right)  }\left(  h\right)  }{\Phi_{\left(  0,0,0,0\right)  }\left(
h\right)  }=\frac{1}{2}\frac{h\left(  \frac{1}{2}h^{2}\right)  }{\frac{1}%
{6}h^{3}}=\frac{3}{2}.
\]

\end{remark}

For tension splines one has the following result:

\begin{theorem}
For $\left(  \lambda_{0},\lambda_{1}\right)  =\left(  0,\rho\right)  $ and
$p=0$ the matrix $\left(  s_{i,j}\right)  _{i,j=1,...,n}$ defined by
$s_{i,j}=\left\langle H_{i},H_{j}\right\rangle _{0}$ is row diagonally
dominant, and
\[
\left\Vert P^{\mathcal{H}_{n}}\right\Vert _{\text{op}}<\infty.
\]

\end{theorem}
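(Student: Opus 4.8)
The plan is to apply Theorem~\ref{ThmMain1} with $\lambda_{0,j}=0$, $\lambda_{1,j}=\rho$ and $p=0$ for every $j$. The case $\rho=0$ is the polynomial case recorded in the Remark above, so assume $\rho\neq 0$; since the operator $L_{(0,\rho)}$ depends only on the unordered pair $\{0,\rho\}$ we may relabel so that $\lambda_0\le 0\le\lambda_1$, and by the reflection $t\mapsto -t$ (which, via (\ref{eqMinus}), merely interchanges the roles of positive and negative $h$) we may further assume $\lambda_0=0<\lambda_1=\rho$, so that $\varphi:=\Phi_{(0,\rho)}$, $\varphi(t)=(e^{\rho t}-1)/\rho$.

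Three things must be checked for Theorem~\ref{ThmMain1}. First, by Proposition~\ref{PropIncreasing}(i) the function $\varphi$ is strictly increasing on all of $\mathbb{R}$, so for \emph{every} $\delta>0$ the hat functions $H_1,\dots,H_n$ are defined for an arbitrary partition $t_1<\dots<t_n$ with mesh at most $\delta$; moreover, since $0=\lambda_{0,j}\le 0\le\lambda_{1,j}=\rho$, Proposition~\ref{PropSumHat} gives $\sum_{j=1}^n|H_j(t)|\le 1$ on $[t_1,t_n]$. Second, $S^{(0)}_{(0,\rho)}$ is continuous on $\mathbb{R}$ (a quotient of fundamental functions whose denominator vanishes only at $0$, where the quotient has the finite limit $3/2$, exactly as in the proof of the Corollary above), hence it is bounded on every compact interval $[-\delta,\delta]$.

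The substantive step is the diagonal-dominance condition $c(\delta)<1$, i.e.\ $T^{(0)}_{(0,\rho)}(h)<1$ for $|h|\le\delta$. Using the scaling rules (\ref{eqTrans})--(\ref{eqmultiple}) I would reduce to $\rho=1$, $\varphi(t)=e^{t}-1$. Then $\varphi(h)/\varphi(-h)=-e^{h}$, an elementary integration gives $\int_0^h\varphi(\tau)^2\,d\tau=\tfrac12 e^{2h}-2e^{h}+h+\tfrac32$, and formula (\ref{eqApp1}) yields $\int_0^h\varphi(\tau-h)\varphi(\tau)\,d\tau=-\Phi_{(0,1,0,-1)}(h)=h-\sinh h$; substituting into (\ref{eqA}) and (\ref{eqB}) gives, for $h>0$,
\[
T^{(0)}_{(0,1)}(h)=A_\varphi(h)=\frac{e^{h}(\sinh h-h)}{\tfrac12 e^{2h}-2e^{h}+h+\tfrac32}\ \ge 0,\qquad
T^{(0)}_{(0,1)}(-h)=B_\varphi(h)=\frac{e^{-h}(\sinh h-h)}{-\tfrac12 e^{-2h}+2e^{-h}+h-\tfrac32}\ \ge 0.
\]
After clearing the (positive, for $h>0$) denominators, $A_\varphi(h)<1$ is equivalent to $e^{h}(2-h)<h+2$ and $B_\varphi(h)<1$ is equivalent to $e^{-h}(h+2)>2-h$, both for $h>0$; each follows from the routine device used in the proof of Theorem~\ref{ThmSbound}, namely that the difference of the two sides and its first derivative vanish at $h=0$, while its second derivative is $-he^{h}$, respectively $he^{-h}$, hence of fixed sign on $(0,\infty)$. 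Consequently $0\le T^{(0)}_{(0,\rho)}(h)<1$ for all real $h\neq 0$, with $T^{(0)}_{(0,\rho)}(0)=\tfrac12$ by L'H\^opital. By continuity, $c(\delta)=\sup_{|h|\le\delta}T^{(0)}_{(0,\rho)}(h)<1$ for every finite $\delta>0$; and for any fixed partition the finitely many numbers $T^{(0)}_{(0,\rho)}(t_{j+1}-t_j)$ are all strictly below $1$, so the Gram matrix $(s_{i,j})$ with $s_{i,j}=\langle H_i,H_j\rangle_0$ is row diagonally dominant with no restriction on the mesh at all.

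Then Theorem~\ref{ThmMain1} delivers
\[
\|P^{\mathcal H_n}\|_{\mathrm{op}}\ \le\ \frac{\sup_{|h|\le\delta}S^{(0)}_{(0,\rho)}(h)}{1-c(\delta)}\ <\ \infty,
\]
uniformly in $n$ over all partitions of mesh at most $\delta$. There is no deep obstacle here: the only work is the explicit evaluation of $T^{(0)}_{(0,\rho)}$ and the verification of the two auxiliary scalar inequalities. The point worth flagging is structural rather than computational: in contrast with the symmetric case of Theorem~\ref{CorLebsgueSym}, where $T^{(0)}_{(\xi,-\xi)}\le\tfrac12$ uniformly, here $T^{(0)}_{(0,\rho)}(h)\to 1$ as $h\to+\infty$, so $c(\delta)\to 1$ as $\delta\to\infty$; this is precisely why one cannot expect a bound on $\|P^{\mathcal H_n}\|_{\mathrm{op}}$ that is uniform in the grid size, and why the statement asserts only finiteness.
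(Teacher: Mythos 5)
Your proof is correct and follows the same route as the paper: both reduce the claim to Theorem \ref{ThmMain1} and to showing that $T^{(0)}_{(0,\rho)}(h)<1$ for the relevant arguments $h$. The difference is in how much of that key inequality is actually established. The paper merely writes $T^{(0)}_{(0,\rho)}$ in closed form as $\frac12\Phi_{(0,1,0,-1)}/\Phi_{(-1,1,0,1)}$ evaluated at $\rho t$ and computes that this ratio tends to $1$ at infinity; it never verifies the pointwise bound $T<1$ on all of $\mathbb{R}$, nor does it address the boundedness of $S^{(0)}_{(0,\rho)}$ or the bound on $\sum_j|H_j|$. You supply exactly these missing pieces: the explicit evaluation of $A_\varphi$ and $B_\varphi$ via (\ref{eqApp1}), the reduction of $A_\varphi(h)<1$ and $B_\varphi(h)<1$ to the elementary scalar inequalities $e^{h}(2-h)<h+2$ and $e^{-h}(h+2)>2-h$ (both correctly proved by the second-derivative device), and the appeal to Propositions \ref{PropIncreasing} and \ref{PropSumHat}. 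Your computations check out, so your write-up is a genuinely complete proof where the paper's is a sketch. One cosmetic remark: you find $T^{(0)}_{(0,\rho)}(h)\to1$ as $h\to+\infty$ while the paper's displayed limit is at $-\infty$; this is only because the denominator printed in the paper, $\Phi_{(-\rho,\rho,0,\rho)}$, is the reflection (via (\ref{eqMinus})) of the one prescribed by definition (\ref{eqT}), namely $\Phi_{(-\rho,\rho,0,-\rho)}$, so the two observations agree and both support your concluding structural point that $c(\delta)\to1$ as $\delta\to\infty$.
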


\begin{proof}
The matrix $\left(  s_{i,j}\right)  _{i,j=1,...,n}$ is row diagonally dominant
if
\[
T_{\left(  0,\rho\right)  }^{\left(  0\right)  }\left(  t\right)  =\frac{1}%
{2}\frac{\Phi_{\left(  0,\rho,0,-\rho\right)  }}{\Phi_{\left(  -\rho
,\rho,0,\rho\right)  }}\left(  t\right)  =\frac{1}{2}\frac{\Phi_{\left(
0,1,0,-1\right)  }}{\Phi_{\left(  -1,1,0,1\right)  }}\left(  \rho t\right)
\]
is smaller than $1.$ Since $\Phi_{\left(  0,1,0,-1\right)  }\left(  t\right)
=\sinh t-t$ and
\[
\Phi_{\left(  -1,1,0,1\right)  }\left(  t\right)  =\frac{1}{2}\left(
e^{t}\left(  t-2\right)  +\sinh t+2\right)
\]
one obtains that
\[
\frac{\Phi_{\left(  0,1,0,-1\right)  }\left(  t\right)  }{\Phi_{\left(
-1,1,0,1\right)  }\left(  t\right)  }=\frac{2\left(  \sinh t-t\right)  }%
{e^{t}\left(  t-2\right)  +\sinh t+2}=\frac{2\left(  \frac{1}{2}e^{2t}%
-\frac{1}{2}-te^{t}\right)  }{e^{2t}\left(  t-2\right)  +\frac{1}{2}%
e^{2t}-\frac{1}{2}+2e^{t}}\rightarrow2
\]
for $t\longmapsto-\infty.$
\end{proof}

It is a natural question whether one has similar results for the general case
$\lambda_{0}<\lambda_{1}$. For positive frequencies $\left(  \lambda
_{0},\lambda_{1}\right)  =\left(  2,1\right)  $ one obtains
\[
T_{\left(  2,1\right)  }^{\left(  0\right)  }\left(  t\right)  =\frac{1}%
{2}\frac{\Phi_{\left(  2,1,-2,-1\right)  }\left(  t\right)  }{\Phi_{\left(
1,-1,0,3\right)  }\left(  t\right)  }=\frac{4\left(  \sinh t-\frac{1}{2}%
\sinh2t\right)  }{-e^{3t}+3\sinh t+9\cosh t-9+1}%
\]
and the graph of $T_{\left(  2,1\right)  }^{\left(  0\right)  }\left(
t\right)  $ is provided by Figure $7$.%


\begin{figure}
[h] 
\begin{center}
\includegraphics[
height=2.1423in,
width=4.1945in
]%
{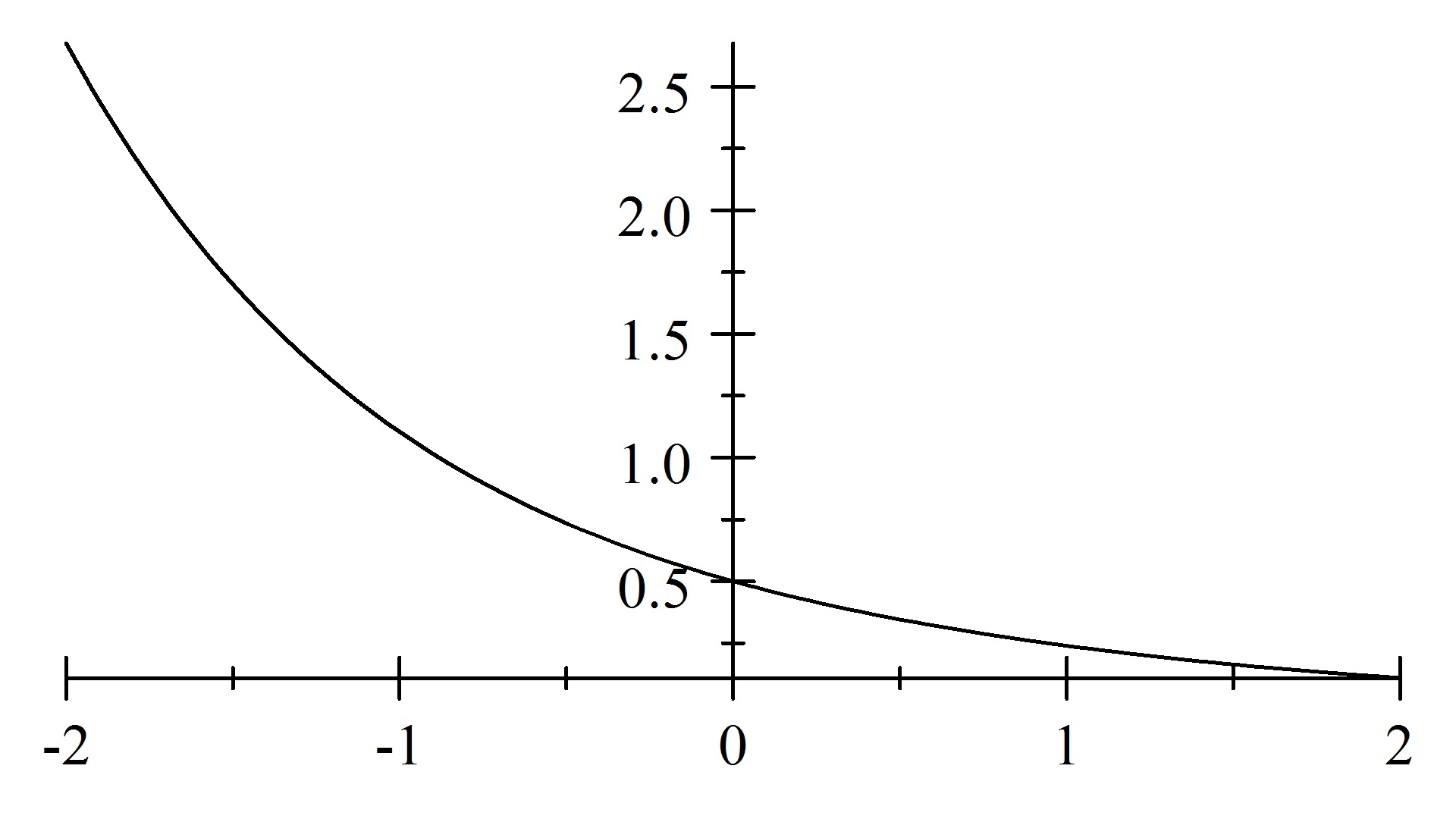}%
\caption{Graph of $T_{\left(  2,1\right)  }^{\left(  0\right)  }$}%
\end{center}
\end{figure}

Hence the constant $c\left(  \delta\right)  $ in unbounded in the variable
$\delta.$ In the case $\left(  \lambda_{0},\lambda_{1}\right)  =\left(
-2,1\right)  $ one obtains that
\[
T_{\left(  -2,1\right)  }^{\left(  0\right)  }\left(  t\right)  =\frac{4}%
{3}\frac{\sinh t-\frac{1}{2}\sinh2t}{-e^{-t}-\frac{1}{3}\sinh3t+\frac{1}%
{9}\cosh3t+\frac{8}{9}}%
\]
In this case the function $t\longmapsto T_{\left(  -2,1\right)  }^{\left(
0\right)  }\left(  t\right)  $ is not symmetric and it does not attain its
maximum at $t=0$ as seen by its graph provided on Figure $8$.

%


\begin{figure}
[h] 
\begin{center}
\includegraphics[
height=2.1423in,
width=4.1945in
]
{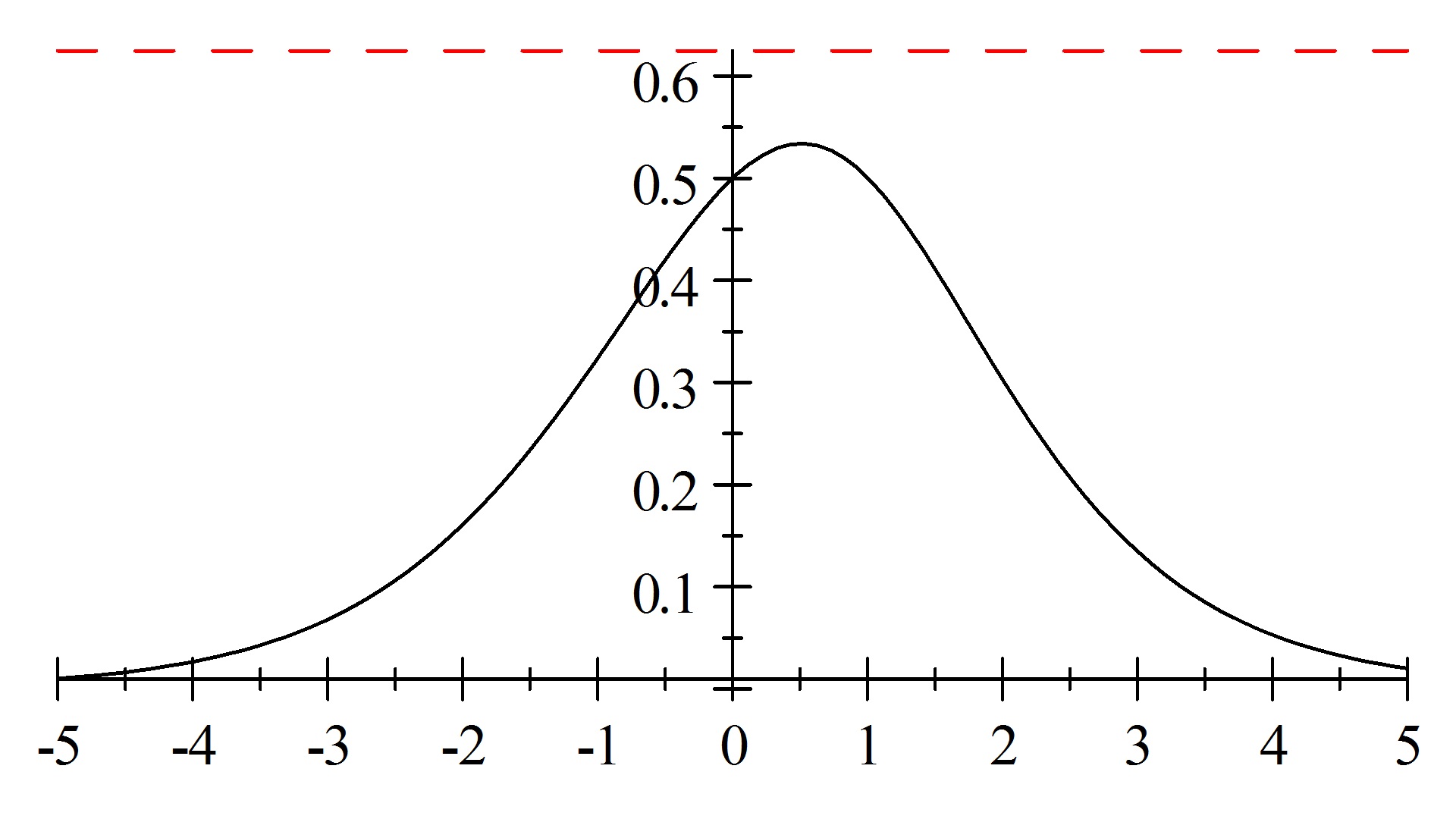}%
\caption{Graph of $T_{\left(  -2,1\right)  }^{\left(  0\right)  }$ and
its upper bound}%
\end{center}
\end{figure}

The following theorem gives an upper bound for $T_{\left(  \lambda_{0}%
,\lambda_{1}\right)  }^{\left(  0\right)  }\left(  t\right)  $:

\begin{theorem}
\label{ThmMain2}For $\lambda_{0}<0<\lambda_{1}$ the following inequality holds
for all real $t$
\begin{equation}
T_{\left(  \lambda_{0},\lambda_{1}\right)  }^{\left(  0\right)  }\left(
t\right)  =\frac{1}{2}\frac{\Phi_{\left(  \lambda_{0},\lambda_{1},-\lambda
_{0},-\lambda_{1}\right)  }\left(  t\right)  }{\Phi_{\left(  \lambda
_{0}-\lambda_{1},\lambda_{1}-\lambda_{0},0,-\lambda_{0}-\lambda_{1}\right)
}\left(  t\right)  }\leq\max\left\{  \frac{2\lambda_{1}-\lambda_{0}}%
{2\lambda_{1}-4\lambda_{0}},\frac{\lambda_{1}-2\lambda_{0}}{4\lambda
_{1}-2\lambda_{0}}\right\}  <1. \label{ineqTTT}%
\end{equation}

\end{theorem}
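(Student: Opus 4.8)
The plan is to imitate the proof of Theorem~\ref{ThmSbound}: two reductions, then a ``differentiate and climb'' argument. Write $C_{1}:=\frac{2\lambda_{1}-\lambda_{0}}{2\lambda_{1}-4\lambda_{0}}$ and $C_{2}:=\frac{\lambda_{1}-2\lambda_{0}}{4\lambda_{1}-2\lambda_{0}}$ for the two quantities on the right of (\ref{ineqTTT}). First I would record the reflection symmetry of $T^{(0)}$: expanding $T_{(\lambda_{0},\lambda_{1})}^{(0)}(-t)$ and applying (\ref{eqMinus}) to the numerator $\Phi_{(\lambda_{0},\lambda_{1},-\lambda_{0},-\lambda_{1})}$ (whose frequency multiset is invariant under sign change) and to the denominator $\Phi_{(\lambda_{0}-\lambda_{1},\lambda_{1}-\lambda_{0},0,-\lambda_{0}-\lambda_{1})}$ gives the identity $T_{(\lambda_{0},\lambda_{1})}^{(0)}(-t)=T_{(-\lambda_{1},-\lambda_{0})}^{(0)}(t)$. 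The substitution $(\lambda_{0},\lambda_{1})\mapsto(-\lambda_{1},-\lambda_{0})$ interchanges $C_{1}$ and $C_{2}$; moreover an elementary computation shows that $C_{1}-C_{2}$ has the same sign as $\lambda_{0}+\lambda_{1}$, and that $C_{1},C_{2}\in(0,1)$ whenever $\lambda_{0}<0<\lambda_{1}$ (this is already the strict inequality in (\ref{ineqTTT})). Hence it suffices to prove: \emph{if $\lambda_{0}+\lambda_{1}\ge 0$, then $T_{(\lambda_{0},\lambda_{1})}^{(0)}(t)\le C_{1}$ for every real $t$} --- in that case $\max\{C_{1},C_{2}\}=C_{1}$, and the case $\lambda_{0}+\lambda_{1}<0$ follows by applying this statement to $(-\lambda_{1},-\lambda_{0})$ and invoking the reflection identity. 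The borderline case $\lambda_{0}+\lambda_{1}=0$ is the symmetric one (then $C_{1}=\tfrac12$), already covered by Theorem~\ref{CorLebsgueSym} or by continuity in $\lambda_{0},\lambda_{1}$, so I may assume $\lambda_{0}+\lambda_{1}>0$.

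Next I would reformulate the inequality. Set $F(t):=2C_{1}\,\Phi_{(\lambda_{0}-\lambda_{1},\lambda_{1}-\lambda_{0},0,-\lambda_{0}-\lambda_{1})}(t)-\Phi_{(\lambda_{0},\lambda_{1},-\lambda_{0},-\lambda_{1})}(t)$. Both fundamental functions here are iterated convolutions of nonnegative functions (using Theorem~\ref{ThmApp1} together with $\Phi_{(\mu,-\mu)}(t)=\sinh(\mu t)/\mu$, $\Phi_{(0)}\equiv 1$, $\Phi_{(\mu)}(t)=e^{\mu t}$), hence each is positive on $(0,\infty)$ and negative on $(-\infty,0)$. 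Consequently $T_{(\lambda_{0},\lambda_{1})}^{(0)}(t)\le C_{1}$ is equivalent to
\[
F(t)\ge 0\ \ \text{for}\ t>0\qquad\text{and}\qquad F(t)\le 0\ \ \text{for}\ t<0 .
\]
Because both $\Phi$'s vanish to order $3$ at $0$ with third derivative $1$, the function $F$ vanishes to order $3$ at $0$ and $F'''(0)=2C_{1}-1=\frac{\lambda_{0}+\lambda_{1}}{\lambda_{1}-2\lambda_{0}}\ge 0$, so the two inequalities hold near the origin; the work is to propagate them to all of $(0,\infty)$ and $(-\infty,0)$.

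For that last step I would follow the mechanism of Theorem~\ref{ThmSbound}: using the recursion (\ref{rec0}), the translation rule (\ref{eqTrans}) and the closed form $\Phi_{(\lambda_{0},\lambda_{1},-\lambda_{0},-\lambda_{1})}(t)=\frac{1}{\lambda_{1}^{2}-\lambda_{0}^{2}}\bigl(\frac{\sinh\lambda_{1}t}{\lambda_{1}}-\frac{\sinh\lambda_{0}t}{\lambda_{0}}\bigr)$ (with the corresponding closed form for the denominator coming from Theorem~\ref{ThmApp1}), one differentiates $F$ and then applies a suitable finite sequence of first-order operators $D_{\mu}f=f'-\mu f$, splitting $D_{\mu+\nu}=D_{\mu}+\nu$ and regrouping terms where needed, until $F$ is reduced to an exponential polynomial $G$ of low order. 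The crucial point is that for the precise value $C_{1}=\frac{2\lambda_{1}-\lambda_{0}}{2\lambda_{1}-4\lambda_{0}}$ the top coefficient of $G$ cancels, leaving a positive multiple of a single fundamental function, whose sign on each half-line is manifest; this is exactly what forces the constant to be $C_{1}$. Since $F$, $F'$, $F''$ (and hence all the intermediate $D_{\mu}$-images) vanish at $0$, three applications of Lemma~\ref{LemPos1} then carry the sign of $G$ back up to $F$, giving $F\ge 0$ on $(0,\infty)$ and $F\le 0$ on $(-\infty,0)$, which is the assertion.

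The main obstacle is this reduction. Unlike in Theorem~\ref{ThmSbound}, where the numerator was a \emph{product} of two lower-order fundamental functions --- so that differentiation was governed by the Leibniz rule and by a frequency equal to $0$ --- here the numerator is a \emph{single} fourth-order fundamental function whose frequency multiset is, for generic $\lambda_{0},\lambda_{1}$, disjoint from that of the denominator. Thus no single operator $D_{\mu}$ simplifies both terms of $F$ simultaneously, and one must alternate differentiations with regroupings and carry a noticeably longer computation before the decisive cancellation appears. Checking, in addition, that after that cancellation the surviving term has the correct sign \emph{uniformly} over all admissible $\lambda_{0}<0<\lambda_{1}$ is the second delicate point.
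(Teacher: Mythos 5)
Your reductions are sound: the reflection identity $T^{(0)}_{(\lambda_{0},\lambda_{1})}(-t)=T^{(0)}_{(-\lambda_{1},-\lambda_{0})}(t)$ is correct and does interchange the two constants, the sign computation for $C_{1}-C_{2}$ is right, and the reformulation as a sign statement for $F=2C_{1}\Phi_{(\lambda_{0}-\lambda_{1},\lambda_{1}-\lambda_{0},0,-\lambda_{0}-\lambda_{1})}-\Phi_{(\lambda_{0},\lambda_{1},-\lambda_{0},-\lambda_{1})}$ combined with Lemma \ref{LemPos1} is exactly the frame the paper uses (the paper reaches it by the rescalings $a=\lambda_{0}/\lambda_{1}$ and $a=\lambda_{1}/\lambda_{0}$ rather than your reflection, obtaining the constant $C_{1}$ on one half-line and $C_{2}$ on the other and then taking the maximum). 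But the entire substance of the theorem lives in the step you defer: showing that for this specific constant the function $F$ has the correct sign away from $0$. That is the content of the paper's Lemma \ref{Lem1App}, a multi-stage computation, and you explicitly label it ``the main obstacle'' without carrying it out. A plan that stops at the hard part is not a proof.

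Moreover, your conjectured mechanism for that step does not match what actually happens, so the gap is not merely one of omitted routine detail. You predict that, as in Theorem \ref{ThmSbound}, a chain of operators $D_{\mu}$ reduces $F$ to a low-order exponential polynomial in which the top coefficient cancels for the precise value $C_{1}$, leaving a single fundamental function of manifest sign. In Lemma \ref{Lem1App} there is no such single cancellation: after normalizing to $a=\lambda_{0}/\lambda_{1}$ and applying $D_{b}D_{0}$ (which strips the denominator term down to $\Phi_{(a-1,1-a)}$ but leaves the numerator term of full order), the proof substitutes $v=e^{-at}$, turning everything into a combination of powers $v^{\alpha}$ with real exponents, and then climbs through the successive functions $F$, $F'$, $\widetilde{F}=v^{2+1/a}F''$, $H$; at each stage nonnegativity of the boundary value at $v=1$ imposes one of the four inequalities whose maximum defines $C_{a,b}$ in (\ref{eqCondC}), and one must finally verify that for $b=-a-1$ all four are dominated by $\max\{\frac{1}{2},\frac{2-a}{2-4a}\}$, uniformly in $a<0$. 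Your proposal offers no route to discovering these four conditions or to the uniformity check, which you yourself flag as the ``second delicate point.'' The missing computation is precisely the proof.
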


\begin{proof}
According to Lemma \ref{Lem1App} in the appendix, for any $a<0$ the function
\[
F_{a,-a-1}\left(  t\right)  =2C_{a,-a-1}\Phi_{\left(  a-1,1-a,0,-a-1\right)
}\left(  t\right)  -\Phi_{\left(  a,1,-a,-1\right)  }\left(  t\right)
\]
is positive for $t>0$ where
\[
C_{a,-a-1}=\max\left\{  \frac{1}{2},\frac{3}{2\left(  2-a\right)  }%
,\frac{3-4a}{2\left(  1-2a\right)  \left(  2-a\right)  },\frac{a+2}{2\left(
1-2a\right)  }\right\}  .
\]
It is easy to see that
\[
C_{a,-a-1}\leq\max\left\{  \frac{1}{2},\frac{2-a}{2-4a}\right\}  .
\]
Let us put $a=\lambda_{0}/\lambda_{1}<0$ and $b=-a-1$. Then
\[
F_{a,-a-1}\left(  \lambda_{1}t\right)  =2C_{a,-a-1}\Phi_{a-1,1-a,0,-a-1}%
\left(  \lambda_{1}t\right)  -\Phi_{\left(  a,-a,1,-1\right)  }\left(
\lambda_{1}t\right)
\]
is positive and therefore
\[
\frac{\Phi_{\left(  \lambda_{0},\lambda_{1},-\lambda_{0},-\lambda_{1}\right)
}\left(  t\right)  }{\Phi_{\left(  \lambda_{0}-\lambda_{1},\lambda_{1}%
-\lambda_{0},0,-\lambda_{0}-\lambda_{1}\right)  }\left(  t\right)  }%
=\frac{\Phi_{\left(  a,-a,1,-1\right)  }\left(  \lambda_{1}t\right)  }%
{\Phi_{a-1,1-a,0,-a-1}\left(  \lambda_{1}t\right)  }\leq2C_{a,-a-1}%
\]
Clearly this implies that $T_{\left(  \lambda_{0},\lambda_{1}\right)
}^{\left(  0\right)  }\left(  t\right)  \leq C_{a,-a-1}$ for all $t>0.$

Now put $a=\lambda_{1}/\lambda_{0}<0$ and $b=-1-a.$ Then $F_{a,-a-1}\left(
t\right)  $ is positive for $t>0$. Since $-\lambda_{0}t>0$ we infer that
\begin{equation}
F_{a,-a-1}\left(  -\lambda_{0}t\right)  =2C_{a,-a-1}\Phi_{\left(
a-1,1-a,0,-a-1\right)  }\left(  -\lambda_{0}t\right)  -\Phi_{\left(
a,1,-a,-1\right)  }\left(  -\lambda_{0}t\right)  \label{eqF2}%
\end{equation}
is positive for all $t>0.$ Hence
\[
\frac{\Phi_{\left(  \lambda_{0},\lambda_{1},-\lambda_{0},-\lambda_{1}\right)
}\left(  -t\right)  }{\Phi_{\left(  \lambda_{1}-\lambda_{0},\lambda
_{0}-\lambda_{1},0,-\lambda_{1}-\lambda_{0}\right)  }\left(  -t\right)
}=\frac{\Phi_{\left(  a,1,-a,-1\right)  }\left(  \lambda_{0}\left(  -t\right)
\right)  }{\Phi_{\left(  a-1,1-a,0,-a-1\right)  }\left(  \lambda_{0}\left(
-t\right)  \right)  }\leq2C_{a,-a-1}%
\]
It follows that $T_{\left(  \lambda_{0},\lambda_{1}\right)  }^{\left(
0\right)  }\left(  t\right)  \leq C$ for all $t\in\mathbb{R}$ where
\[
C\leq\max\left\{  C_{a,-a-1}:a=\frac{\lambda_{0}}{\lambda_{1}}\text{ or
}a=\frac{\lambda_{1}}{\lambda_{0}}.\right\}
\]
Hence
\[
C\leq\max\left\{  \frac{1}{2},\frac{2-\frac{\lambda_{0}}{\lambda_{1}}%
}{2-4\frac{\lambda_{0}}{\lambda_{1}}},\frac{2-\frac{\lambda_{1}}{\lambda_{0}}%
}{2-4\frac{\lambda_{1}}{\lambda_{0}}}\right\}  =\max\left\{  \frac
{2\lambda_{1}-\lambda_{0}}{2\lambda_{1}-4\lambda_{0}},\frac{\lambda
_{1}-2\lambda_{0}}{4\lambda_{1}-2\lambda_{0}}\right\}  .
\]

\end{proof}

Now we specify the general result of Theorem \ref{ThmMain1}.

\begin{theorem}
Let $\lambda_{0j}<0<\lambda_{1,j}$ be real numbers and define $\varphi
_{j}=\Phi_{\left(  \lambda_{0,j},\lambda_{1,j}\right)  }$ for $j=1,...,n-1.$
Then the matrix $\left(  s_{i,j}\right)  _{i,j=1,...,n}$ defined by
$s_{i,j}=\left\langle H_{i},H_{j}\right\rangle _{0}$ is row diagonally
dominant, and
\[
\left\Vert P^{\mathcal{H}_{n}}\right\Vert _{\text{op}}\leq2\max_{j=1,...,n-1}%
\max\left\{  \frac{2\lambda_{1,j}-4\lambda_{0,j}}{-3\lambda_{0j}}%
,\frac{4\lambda_{1,j}-2\lambda_{0,j}}{3\lambda_{1,j}}\right\}
\]

\end{theorem}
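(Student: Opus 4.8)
The plan is to obtain the estimate as an immediate consequence of Theorem~\ref{ThmMain1} applied with weight parameter $p=0$: Theorem~\ref{ThmMain2} will supply the diagonal dominance constant, Theorem~\ref{ThmSbound} the bound on $S^{(0)}$, and Proposition~\ref{PropSumHat} the bound on $\sum_j\lvert H_j\rvert$; everything after that is elementary algebra.

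First I would check that Theorem~\ref{ThmMain1} applies. Since $\lambda_{0,j}<0<\lambda_{1,j}$, Proposition~\ref{PropIncreasing}(i) shows that each $\varphi_j=\Phi_{(\lambda_{0,j},\lambda_{1,j})}$ is strictly increasing on all of $\mathbb{R}$; hence, for a given partition $t_1<\dots<t_n$, assumptions (i)--(iii) of Section~\ref{S6} hold with $\delta$ chosen to be any number $\geq\max_j(t_{j+1}-t_j)$. Next, Theorem~\ref{ThmMain2} (valid because $\lambda_{0,j}<0<\lambda_{1,j}$) gives, for every real $h$,
\[
T_{(\lambda_{0,j},\lambda_{1,j})}^{(0)}(h)\leq C_j:=\max\left\{\frac{2\lambda_{1,j}-\lambda_{0,j}}{2\lambda_{1,j}-4\lambda_{0,j}},\ \frac{\lambda_{1,j}-2\lambda_{0,j}}{4\lambda_{1,j}-2\lambda_{0,j}}\right\}<1 .
\]
Moreover $T_{(\lambda_0,\lambda_1)}^{(0)}$ is non-negative on $\mathbb{R}$: for $h>0$ one has $T_{(\lambda_0,\lambda_1)}^{(0)}(h)=A_\varphi(h)$ and $T_{(\lambda_0,\lambda_1)}^{(0)}(-h)=B_\varphi(h)$ (from the proof of Theorem~\ref{ThmMain1}), and inspection of the signs of $\varphi(\pm h)$, $\varphi(\tau-h)$, $\varphi(\tau)$ on $(0,h)$ in (\ref{eqA})--(\ref{eqB}) shows both are positive, while $T_{(\lambda_0,\lambda_1)}^{(0)}(0)=\frac{1}{2}$. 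Consequently $\lvert T_{(\lambda_{0,j},\lambda_{1,j})}^{(0)}(h)\rvert\leq C_j$ for all $h$, so the constant $c(\delta)$ of (\ref{eqFormulac}) satisfies $c(\delta)\leq c:=\max_{j=1,\dots,n-1}C_j<1$ regardless of $\delta$; in particular the Gram matrix $(s_{i,j})$ with $s_{i,j}=\langle H_i,H_j\rangle_0$ is row diagonally dominant.

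To assemble the bound, I would use Proposition~\ref{PropSumHat}, which (since $\lambda_{0,j}\leq 0\leq\lambda_{1,j}$) gives $\sum_{j=1}^n\lvert H_j(t)\rvert\leq 1$ for all $t\in[t_1,t_n]$, and Theorem~\ref{ThmSbound}, which gives $S_{(\lambda_{0,j},\lambda_{1,j})}^{(0)}(h)\leq 2$ for all real $h$. Substituting these together with $c(\delta)\leq c$ into (\ref{eqoperatornorm2}) yields
\[
\left\Vert P^{\mathcal{H}_n}\right\Vert_{\text{op}}\leq\frac{2}{1-c} .
\]
It then remains to rewrite $2/(1-c)$. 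Because $\lambda_{0,j}<0<\lambda_{1,j}$, each of $2\lambda_{1,j}-4\lambda_{0,j}$, $4\lambda_{1,j}-2\lambda_{0,j}$, $-3\lambda_{0,j}$, $3\lambda_{1,j}$ is positive, and a direct computation gives
\[
1-\frac{2\lambda_{1,j}-\lambda_{0,j}}{2\lambda_{1,j}-4\lambda_{0,j}}=\frac{-3\lambda_{0,j}}{2\lambda_{1,j}-4\lambda_{0,j}},\qquad 1-\frac{\lambda_{1,j}-2\lambda_{0,j}}{4\lambda_{1,j}-2\lambda_{0,j}}=\frac{3\lambda_{1,j}}{4\lambda_{1,j}-2\lambda_{0,j}} .
\]
Writing $x_j,y_j$ for the two fractions in the definition of $C_j$, one has $1-c=\min_{j}\min\{\,1-x_j,\,1-y_j\,\}$, hence
\[
1-c=\min_{j=1,\dots,n-1}\min\left\{\frac{-3\lambda_{0,j}}{2\lambda_{1,j}-4\lambda_{0,j}},\ \frac{3\lambda_{1,j}}{4\lambda_{1,j}-2\lambda_{0,j}}\right\},
\]
and taking reciprocals produces exactly
\[
\frac{2}{1-c}=2\max_{j=1,\dots,n-1}\max\left\{\frac{2\lambda_{1,j}-4\lambda_{0,j}}{-3\lambda_{0,j}},\ \frac{4\lambda_{1,j}-2\lambda_{0,j}}{3\lambda_{1,j}}\right\} ,
\]
which is the claimed bound. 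I do not anticipate a genuine obstacle: the statement is a repackaging of Theorems~\ref{ThmMain1}, \ref{ThmMain2} and \ref{ThmSbound} together with Proposition~\ref{PropSumHat}. The only points demanding attention are the bookkeeping of signs in the final algebra (all the relevant linear combinations of $\lambda_{0,j}$ and $\lambda_{1,j}$ being positive is precisely what $\lambda_{0,j}<0<\lambda_{1,j}$ buys us) and noticing that the bound in Theorem~\ref{ThmMain2} is uniform in $h$, so that the condition $c(\delta)<1$ holds for every grid size.
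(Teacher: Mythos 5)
Your proposal is correct and follows essentially the same route as the paper: apply Theorem \ref{ThmMain1} with $p=0$, bound $\sum_j|H_j|$ by Proposition \ref{PropSumHat}, bound $S^{(0)}$ by Theorem \ref{ThmSbound}, bound $c$ by Theorem \ref{ThmMain2}, and then rewrite $2/(1-c)$ algebraically. Your version is in fact slightly more careful than the paper's at two points the paper glosses over -- verifying that the hypotheses of Theorem \ref{ThmMain1} hold (via Proposition \ref{PropIncreasing}(i)) and handling the absolute value in the definition of $c(\delta)$ by checking the sign of $T^{(0)}$ -- but these are refinements of the same argument, not a different one.
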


\begin{proof}
By Proposition \ref{PropSumHat}, $\sum_{j=1}^{n}\left\vert H_{j}\left(
t\right)  \right\vert \leq1,$ and Theorem \ref{ThmSbound} shows that
$S_{\left(  \lambda_{0},\lambda_{1}\right)  }^{\left(  0\right)  }\left(
t\right)  \leq2$ for all real $t.$ Hence according to Theorem \ref{ThmMain1}
\[
\left\Vert P^{\mathcal{H}_{n}}\right\Vert _{\text{op}}\leq\frac{2}{1-c}.
\]
Theorem \ref{ThmMain2} shows that
\[
c\leq\max_{j=1,...,n-1}\max\left\{  \frac{2\lambda_{1,j}-\lambda_{0,j}%
}{2\lambda_{1,j}-4\lambda_{0,j}},\frac{\lambda_{1,j}-2\lambda_{0,j}}%
{4\lambda_{1,j}-2\lambda_{0,j}}\right\}  <1.
\]
It follows that for some $j=1,...,n,$ at least one of the following two
inequalities holds:
\begin{align*}
1-c  &  \geq1-\frac{2\lambda_{1,j}-\lambda_{0,j}}{2\lambda_{1,j}%
-4\lambda_{0,j}}=\frac{-3\lambda_{0,j}}{2\lambda_{1,j}-4\lambda_{0,j}}\\
1-c  &  \geq1-\frac{\lambda_{1,j}-2\lambda_{0,j}}{4\lambda_{1,j}%
-2\lambda_{0,j}}=\frac{3\lambda_{1,j}}{4\lambda_{1,j}-2\lambda_{0,j}}%
\end{align*}
It follows that
\[
\frac{1}{1-c}\leq\max_{j=1,...,n-1}\max\left\{  \frac{2\lambda_{1,j}%
-4\lambda_{0,j}}{-3\lambda_{0j}},\frac{4\lambda_{1,j}-2\lambda_{0,j}}%
{3\lambda_{1,j}}\right\}  .
\]

\end{proof}

\section{\label{S7}Error estimates for interpolation with exponential splines
of order $4$}

Assume that $f,g:\left(  t_{j-1},t_{j}\right)  \rightarrow\mathbb{R}$ are
differentiable and that $f\left(  t\right)  ,g\left(  t\right)  $ have limits
when $t\rightarrow t_{j},t_{j+1}$ for $t\in\left(  t_{j},t_{j+1}\right)  ,$
and let $D_{\lambda}f=f^{\prime}-\lambda f.$ Partial integration shows that
\begin{equation}
\int_{t_{j}}^{t_{j+1}}D_{\lambda_{2,j}}f\left(  t\right)  \cdot g\left(
t\right)  dt=fg\mid_{t_{j}}^{t_{j+1}}-\int_{t_{j}}^{t_{j+1}}f\left(  t\right)
D_{-\lambda_{2,j}}g\left(  t\right)  dt \label{eqDpartial}%
\end{equation}
where $f\mid_{t_{j}}^{t_{j+1}}$ is defined as the difference $f\left(
t_{j+1}\right)  -f\left(  t_{j}\right)  .$ Note that
\begin{equation}
D_{-\lambda_{2,j}}\left(  h\left(  t\right)  e^{pt}\right)  =e^{pt}\left(
h^{\prime}\left(  t\right)  +ph\left(  t\right)  +\lambda_{2,j}h\left(
t\right)  \right)  =e^{pt}D_{-p-\lambda_{2,j}}h\left(  t\right)  .
\label{eqpp}%
\end{equation}
Take now $g=he^{pt}$ in formula (\ref{eqDpartial}) and use (\ref{eqpp}):
\begin{equation}
\int_{t_{j}}^{t_{j+1}}D_{\lambda_{2,j}}f\cdot he^{pt}dt=fhe^{pt}\mid_{t_{j}%
}^{t_{j+1}}-\int_{t_{j}}^{t_{j+1}}fD_{-p-\lambda_{2,j}}h\cdot e^{pt}dt.
\label{eqPI3}%
\end{equation}
Now replace $f$ by $D_{\lambda_{3,j}}f,$ and we see that
\begin{align*}
\int_{t_{j}}^{t_{j+1}}D_{\lambda_{2,j}}D_{\lambda_{3,j}}f\cdot he^{pt}dt  &
=D_{\lambda_{3,j}}f\cdot he^{pt}\mid_{t_{j}}^{t_{j+1}}\\
&  -\int_{t_{j}}^{t_{j+1}}D_{\lambda_{3,j}}f\cdot D_{-p-\lambda_{2,j}\text{ }%
}h\cdot e^{pt}dt.
\end{align*}
We apply (\ref{eqPI3}) to the last summand with $\lambda_{3,j}$ instead of
$\lambda_{2,j}$ and replace $h$ by $D_{-p-\lambda_{2,j}}h,$ so we have the
general identity
\begin{align*}
\int_{t_{j}}^{t_{j+1}}D_{\lambda_{2,j}}D_{\lambda_{3,j}}f\cdot he^{pt}dt  &
=D_{\lambda_{3,j}}f\cdot he^{pt}\mid_{t_{j}}^{t_{j+1}}\\
-fD_{-p-\lambda_{2,j}}h\cdot e^{pt}  &  \mid_{t_{j}}^{t_{j+1}}+\int_{t_{j}%
}^{t_{j+1}}fD_{-p-\lambda_{3,j}}D_{-p-\lambda_{2,j}\text{ }}h\cdot e^{pt}dt.
\end{align*}

\begin{proposition}
\label{PropOrtho}Assume that for the real numbers $\lambda_{0,j},\lambda
_{1,j}$,$\lambda_{2,j}$,$\lambda_{3,j}$ for $j=1,...,n-1$ there exists a real
number $p$ such that for all $j=1,....,n-1$
\begin{equation}
\lambda_{0,j}=-p-\lambda_{2,j}\text{ and }\lambda_{1,j}=-p-\lambda_{3,j}.
\label{eqcondlambda}%
\end{equation}
Assume further that $F\in C^{2}\left[  t_{1},t_{n}\right]  $ vanishes on
$t_{1},...,t_{n}$ and $F^{\prime}\left(  t_{1}\right)  =0$ and $F^{\prime
}\left(  t_{n}\right)  =0.$ Define $f\in C\left[  t_{1},t_{n}\right]  $ by
setting $f\left(  t\right)  =D_{\lambda_{2,j}}D_{\lambda_{3,j}}F\left(
t\right)  $ for $t\in\left[  t_{j},t_{j+1}\right]  $ and $j=1,...,n-1.$ Then
\[
\left\langle f,h\right\rangle _{p}=0
\]
for any piecewise exponential spline $h$ with respect to $L_{\left(
\lambda_{0,j},\lambda_{1,j}\right)  }$ for $j=1,...,n-1.$
\end{proposition}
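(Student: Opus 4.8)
The plan is to compute $\langle f,h\rangle_p$ directly from its definition and collapse everything to boundary terms that vanish. Since $f$ is given piecewise by $f(t)=D_{\lambda_{2,j}}D_{\lambda_{3,j}}F(t)$ on $[t_j,t_{j+1}]$, I would first write
\[
\langle f,h\rangle_p=\int_{t_1}^{t_n}f(t)h(t)e^{pt}\,dt=\sum_{j=1}^{n-1}\int_{t_j}^{t_{j+1}}D_{\lambda_{2,j}}D_{\lambda_{3,j}}F(t)\cdot h(t)e^{pt}\,dt,
\]
and then apply to each summand the general integration-by-parts identity established just before the statement, with the generic function there replaced by $F$:
\[
\int_{t_j}^{t_{j+1}}D_{\lambda_{2,j}}D_{\lambda_{3,j}}F\cdot he^{pt}\,dt
=D_{\lambda_{3,j}}F\cdot he^{pt}\mid_{t_j}^{t_{j+1}}
-F\,D_{-p-\lambda_{2,j}}h\cdot e^{pt}\mid_{t_j}^{t_{j+1}}
+\int_{t_j}^{t_{j+1}}F\,D_{-p-\lambda_{3,j}}D_{-p-\lambda_{2,j}}h\cdot e^{pt}\,dt.
\]

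Next I would treat the three groups of terms separately. For the remaining integral, hypothesis (\ref{eqcondlambda}) gives $-p-\lambda_{2,j}=\lambda_{0,j}$ and $-p-\lambda_{3,j}=\lambda_{1,j}$, so that $D_{-p-\lambda_{3,j}}D_{-p-\lambda_{2,j}}h=D_{\lambda_{1,j}}D_{\lambda_{0,j}}h=L_{\left(\lambda_{0,j},\lambda_{1,j}\right)}h$ (the constant-coefficient factors commuting), which is identically zero on $(t_j,t_{j+1})$ because the restriction of $h$ to that interval solves $L_{\left(\lambda_{0,j},\lambda_{1,j}\right)}h=0$; hence each such integral drops out. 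For the middle boundary term, $F$ vanishes at $t_j$ and $t_{j+1}$, so $F\,D_{-p-\lambda_{2,j}}h\cdot e^{pt}\mid_{t_j}^{t_{j+1}}=0$ for every $j$. For the first boundary term, again using $F(t_j)=F(t_{j+1})=0$ I would simplify $D_{\lambda_{3,j}}F(t_k)=F'(t_k)-\lambda_{3,j}F(t_k)=F'(t_k)$ at $t_k=t_j$ and $t_k=t_{j+1}$, so that this term equals $F'(t_{j+1})h(t_{j+1})e^{pt_{j+1}}-F'(t_j)h(t_j)e^{pt_j}$.

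Finally I would sum over $j=1,\dots,n-1$. The middle and the integral contributions are already zero, and, writing $a_k:=F'(t_k)h(t_k)e^{pt_k}$, the surviving sum is $\sum_{j=1}^{n-1}(a_{j+1}-a_j)=a_n-a_1$, a telescoping collapse which is legitimate because $F\in C^2[t_1,t_n]$ makes $F'$ single-valued and continuous at the interior knots and $h$, being a piecewise exponential spline of order $2$, is continuous there, so each $a_k$ is unambiguous. Since $F'(t_1)=F'(t_n)=0$ we get $a_1=a_n=0$, whence $\langle f,h\rangle_p=0$, which is the assertion. There is no genuine obstacle in this argument; the only point requiring care is the bookkeeping — one must simplify the first boundary term using $F(t_j)=0$ \emph{before} summing, so that the jump of $\lambda_{3,j}$ across knots becomes irrelevant and the surviving quantity $F'$ is continuous, and then observe that the telescoping leaves only the endpoint values, which are annihilated by $F'(t_1)=F'(t_n)=0$.
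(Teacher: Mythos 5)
Your proof is correct and follows essentially the same route as the paper: the same integration-by-parts identity on each subinterval, the same vanishing of the integral term via $L_{(\lambda_{0,j},\lambda_{1,j})}h=0$ and of the middle boundary term via $F(t_j)=0$, and the same telescoping of the remaining boundary contributions annihilated by $F'(t_1)=F'(t_n)=0$. Your explicit reduction of $D_{\lambda_{3,j}}F(t_k)$ to $F'(t_k)$ before summing is a slightly more careful justification of the telescoping than the paper's appeal to continuity, but it is the same argument.
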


\begin{proof}
Since we assume that $F\left(  t_{j}\right)  =0$ for $j=1,...,n$ we obtain
\[
\left\langle f,h\right\rangle _{p}=\sum_{j=1}^{n-1}\int_{t_{j}}^{t_{j+1}%
}D_{\lambda_{2,j}}D_{\lambda_{3,j}}F\cdot he^{pt}dt=\sum_{j=1}^{n-1}%
D_{\lambda_{3,j}}f\cdot he^{pt}\mid_{t_{j}}^{t_{j+1}}%
\]
since the expressions $FD_{-p-\lambda_{2,j}}h\cdot e^{pt}\mid_{t_{j}}%
^{t_{j+1}}$clearly vanish and
\[
\int_{t_{j}}^{t_{j+1}}F\left(  t\right)  D_{-p-\lambda_{3,j}}D_{-p-\lambda
_{2,j}\text{ }}h\left(  t\right)  \cdot e^{pt}dt=0
\]
since $h\left(  t\right)  $ is an exponential spline $h$ with respect to
$L_{\left(  \lambda_{0,j},\lambda_{1,j}\right)  }$ for $j=1,...,n-1,$ and
condition (\ref{eqcondlambda}) is satisfied. Since $D_{\lambda_{3,j}}f\left(
t\right)  $ and $h\left(  t\right)  e^{pt}$ is continuous we infer that the
following sum is telescoping:
\[
\sum_{j=1}^{n-1}\left(  D_{\lambda_{3,j}}f\cdot he^{pt}\mid_{t_{j}}^{t_{j+1}%
}\right)  =D_{\lambda_{3,n-1}}F\left(  t_{n}\right)  h\left(  t_{n}\right)
e^{pt_{n}}-D_{\lambda_{3,1}}F\left(  t_{1}\right)  h\left(  t_{1}\right)
e^{pt_{1}}=0
\]
since $F\left(  t_{n}\right)  =F^{\prime}\left(  t_{n}\right)  =0$ and
$F\left(  t_{1}\right)  =F^{\prime}\left(  t_{1}\right)  =0.$
\end{proof}

\begin{theorem}
Assume that there exists a real number $p$ such that $-p-\lambda_{2,j}%
=\lambda_{0,j}$ and $-p-\lambda_{3,j}=\lambda_{1,j}$ for $j=1,...,n. $ Let
$t_{1}<...<t_{n}$ and $F\in C^{4}\left[  t_{1},t_{n}\right]  $. Assume that
$I_{4}\left(  F\right)  $ is a piecewise exponential spline with respect to
$L_{\left(  \lambda_{0,j},\lambda_{1,j},\lambda_{2,j},\lambda_{3,j}\right)  }$
for $j=1,...,n-1,$ interpolating $F$ at the points $t_{1},...,t_{n}$ and such
that
\[
\frac{d}{dt}F\left(  t_{1}\right)  =\frac{d}{dt}I_{4}\left(  F\right)  \left(
t_{1}\right)  \text{ and }\frac{d}{dt}F\left(  t_{n}\right)  =\frac{d}%
{dt}I_{4}\left(  F\right)  \left(  t_{n}\right)  .
\]
Then for any $t\in\left[  t_{j},t_{j+1}\right]  $ the estimate
\[
\left\vert F\left(  t\right)  -I_{4}\left(  F\right)  \left(  t\right)
\right\vert \leq C\cdot\max_{\zeta\in\left[  t_{j},t_{j+1}\right]
\ ,j=1,...,n-1}\left\vert L_{\left(  \lambda_{0,j},\lambda_{1,j},\lambda
_{2,j},\lambda_{3,j}\right)  }F\left(  \zeta\right)  \right\vert
\]
holds, where
\[
C=\left(  1+\left\Vert P^{\mathcal{H}_{n}}\right\Vert _{\text{op}}\right)
\max_{j=1,...n-1}M_{\lambda_{2,j},\lambda_{3,j}}^{t_{j},t_{j+1}}%
\max_{j=1,...n}M_{\lambda_{0,j}\lambda_{1,j}}^{t_{j}t_{j+1}}%
\]
and $\left\Vert P^{\mathcal{H}_{n}}\right\Vert _{\text{op}}$ is given in
(\ref{eqoperatornorm2}) with respect to the weight function $e^{pt}.$
\end{theorem}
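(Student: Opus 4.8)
The plan is to transplant de Boor's two–step argument for cubic splines to the exponential setting, gluing together the three order–$2$ ingredients already in hand: the vanishing–endpoints estimate with constant $M_{\lambda_{0},\lambda_{1}}^{a,b}$ (the first theorem of Section \ref{S4}), the interpolation error estimate for $I_{2}$ (the last theorem of Section \ref{S4}), and the bound on $\Vert P^{\mathcal{H}_{n}}\Vert_{\mathrm{op}}$ relative to $\langle\cdot,\cdot\rangle_{p}$ (Theorem \ref{ThmMain1}), linked through the orthogonality Proposition \ref{PropOrtho}.

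First I would put $G:=F-I_{4}(F)$; by (\ref{eqInterpolB1})--(\ref{eqInterpolB2}) it vanishes at $t_{1},\dots,t_{n}$ and satisfies $G'(t_{1})=G'(t_{n})=0$. Define $f$ on each $[t_{j},t_{j+1}]$ by $f:=D_{\lambda_{2,j}}D_{\lambda_{3,j}}G=L_{(\lambda_{2,j},\lambda_{3,j})}G$. Since the operators $\frac{d}{dt}-\lambda_{i,j}$ commute and $L_{(\lambda_{0,j},\lambda_{1,j},\lambda_{2,j},\lambda_{3,j})}I_{4}(F)=0$ on $(t_{j},t_{j+1})$, one has $L_{(\lambda_{0,j},\lambda_{1,j})}f=L_{(\lambda_{0,j},\lambda_{1,j},\lambda_{2,j},\lambda_{3,j})}F$ there. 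The easy half of the estimate: on $[t_{j},t_{j+1}]$ the function $G$ vanishes at both endpoints and $L_{(\lambda_{2,j},\lambda_{3,j})}G=f$, so the first theorem of Section \ref{S4} gives $|G(t)|\le M_{\lambda_{2,j},\lambda_{3,j}}^{t_{j},t_{j+1}}\max_{x\in[t_{j},t_{j+1}]}|f(x)|$, whence $\Vert G\Vert_{[t_{1},t_{n}]}\le(\max_{j}M_{\lambda_{2,j},\lambda_{3,j}}^{t_{j},t_{j+1}})\,\Vert f\Vert_{[t_{1},t_{n}]}$.

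The delicate half produces the factor $1+\Vert P^{\mathcal{H}_{n}}\Vert_{\mathrm{op}}$. Let $\phi$ be the function equal to $L_{(\lambda_{2,j},\lambda_{3,j})}F$ on $[t_{j},t_{j+1}]$ and $\psi$ the one equal to $L_{(\lambda_{2,j},\lambda_{3,j})}I_{4}(F)$ on $[t_{j},t_{j+1}]$, so that $f=\phi-\psi$. The key observation is that $\psi\in\mathcal{H}_{n}$: on each $(t_{j},t_{j+1})$ it satisfies $L_{(\lambda_{0,j},\lambda_{1,j})}\psi=L_{(\lambda_{0,j},\lambda_{1,j},\lambda_{2,j},\lambda_{3,j})}I_{4}(F)=0$, and it is continuous at the interior knots because $I_{4}(F)\in C^{2}[t_{1},t_{n}]$, so $I_{4}(F)$, $I_{4}(F)'$ and $I_{4}(F)''$ all match there (for constant frequencies, and for the symmetric operator $L_{(\xi_{j},\xi_{j},-\xi_{j},-\xi_{j})}$, this is automatic; in the genuinely piecewise case one needs the frequencies chosen compatibly). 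Since $G$ vanishes at $t_{1},\dots,t_{n}$, $G'(t_{1})=G'(t_{n})=0$ and $(\lambda_{0,j},\lambda_{1,j})=(-p-\lambda_{2,j},-p-\lambda_{3,j})$, Proposition \ref{PropOrtho} (applied with its $F$ taken to be $G$) gives $\langle f,h\rangle_{p}=0$ for all $h\in\mathcal{H}_{n}$; combined with $\psi\in\mathcal{H}_{n}$ this identifies $\psi=P^{\mathcal{H}_{n}}(\phi)$. Then (\ref{eqSecondSt}) yields $\Vert f\Vert_{[t_{1},t_{n}]}=\Vert\phi-P^{\mathcal{H}_{n}}(\phi)\Vert_{[t_{1},t_{n}]}\le(1+\Vert P^{\mathcal{H}_{n}}\Vert_{\mathrm{op}})\,\mathrm{dist}_{C[t_{1},t_{n}]}(\phi,\mathcal{H}_{n})$. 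Finally, since $\phi=L_{(\lambda_{2,j},\lambda_{3,j})}F\in C^{2}$, bounding the distance by $\Vert\phi-I_{2}(\phi)\Vert_{[t_{1},t_{n}]}$ and invoking the interpolation error estimate of Section \ref{S4} gives $\mathrm{dist}_{C[t_{1},t_{n}]}(\phi,\mathcal{H}_{n})\le(\max_{j}M_{\lambda_{0,j},\lambda_{1,j}}^{t_{j},t_{j+1}})\max_{j}\max_{x\in[t_{j},t_{j+1}]}|L_{(\lambda_{0,j},\lambda_{1,j})}\phi(x)|$, and $L_{(\lambda_{0,j},\lambda_{1,j})}\phi=L_{(\lambda_{0,j},\lambda_{1,j},\lambda_{2,j},\lambda_{3,j})}F$. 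Chaining the two halves produces exactly $\Vert F-I_{4}(F)\Vert_{[t_{1},t_{n}]}\le C\max_{x\in[t_{1},t_{n}]}|L_{(\lambda_{0,j},\lambda_{1,j},\lambda_{2,j},\lambda_{3,j})}F(x)|$ with $C=(1+\Vert P^{\mathcal{H}_{n}}\Vert_{\mathrm{op}})(\max_{j}M_{\lambda_{2,j},\lambda_{3,j}}^{t_{j},t_{j+1}})(\max_{j}M_{\lambda_{0,j},\lambda_{1,j}}^{t_{j},t_{j+1}})$.

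I expect the one genuine obstacle to be the identification $\psi=P^{\mathcal{H}_{n}}(\phi)$ in the delicate half: verifying that $\psi=L_{(\lambda_{2,j},\lambda_{3,j})}I_{4}(F)$ is an admissible element of $\mathcal{H}_{n}$ (the continuity across interior knots, which rests on the $C^{2}$–regularity of $I_{4}(F)$ and, in the piecewise–frequency case, on compatibility of the frequencies, being automatic for the cases of interest) and that Proposition \ref{PropOrtho} applies verbatim with $G$ in place of its $F$. Everything else — the two applications of the Section \ref{S4} estimates carried out interval by interval and the passage to global maxima — is routine bookkeeping; one only needs to keep in mind that $\phi$ and $\psi$ need not be globally continuous in the piecewise case, but since the argument uses them only on individual closed subintervals and inside the inner product $\langle\cdot,\cdot\rangle_{p}$, the finitely many knot values are immaterial.
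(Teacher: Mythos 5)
Your proposal is correct and follows essentially the same route as the paper: the pointwise bound with constant $M_{\lambda_{2,j},\lambda_{3,j}}^{t_{j},t_{j+1}}$ applied to $F-I_{4}(F)$, the identification via Proposition \ref{PropOrtho} of $L_{(\lambda_{2,j},\lambda_{3,j})}I_{4}(F)$ as the best weighted $L^{2}$-approximation of $L_{(\lambda_{2,j},\lambda_{3,j})}F$ from $\mathcal{H}_{n}$, and then (\ref{eqSecondSt}) combined with the $I_{2}$ interpolation error estimate. Your explicit attention to why $\psi=L_{(\lambda_{2,j},\lambda_{3,j})}I_{4}(F)$ actually lies in $\mathcal{H}_{n}$ (continuity across interior knots in the piecewise-frequency case) is a point the paper passes over silently, and is worth the care you give it.
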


\begin{proof}
The function $t\longmapsto$ $I_{4}\left(  F\right)  \left(  t\right)  $ for
$t\in\left[  t_{j},t_{j+1}\right]  $ is an exponential polynomial in $E\left(
\lambda_{0,j},\lambda_{1,j},\lambda_{2,j},\lambda_{3,j}\right)  .$ Further
$g\left(  t\right)  :=F\left(  t\right)  -I_{4}\left(  F\right)  \left(
t\right)  $ for $t\in\left[  t_{j},t_{j+1}\right]  $ vanishes in $t_{j}$ and
$t_{j+1}.$ Using inequality (\ref{eqmaxderiv}) for the function $g\left(
t\right)  $ and for the constants $\lambda_{2,j},$ $\lambda_{3,j},$ we see
that
\[
\left\vert F\left(  t\right)  -I_{4}\left(  F\right)  \left(  t\right)
\right\vert \leq M_{\lambda_{2,j},\lambda_{3,j}}^{t_{j},t_{j+1}}\max_{\zeta
\in\left[  t_{j},t_{j+1}\right]  }\left\vert D_{\lambda_{2,j}}D_{\lambda
_{3,j}}\left[  F-I_{4}\left(  F\right)  \right]  \left(  \zeta\right)
\right\vert
\]
for $t\in\left[  t_{j},t_{j+1}\right]  .$ We define $f\in C\left[  t_{1}%
,t_{n}\right]  $ and $f_{0}\in C\left[  t_{1},t_{n}\right]  $ by putting
\begin{align*}
f\left(  t\right)   &  =D_{\lambda_{2,j}}D_{\lambda_{3,j}}F\left(  t\right)
\quad\quad\text{ for }t\in\left[  t_{j},t_{j+1}\right] \\
f_{0}\left(  t\right)   &  =D_{\lambda_{2,j}}D_{\lambda_{3,j}}\left(
I_{4}\left(  F\right)  \right)  \left(  t\right)  \quad\quad\text{ for }%
t\in\left[  t_{j},t_{j+1}\right]  .
\end{align*}
From Proposition \ref{PropOrtho} we infer that $\left\langle f-f_{0}%
,h\right\rangle _{p}=0$ for any exponential spline $h$ with respect to
$L_{\left(  \lambda_{0,j},\lambda_{1,j}\right)  }$ for $j=1,...,n-1.$ Hence
$f_{0}$ is the best $L^{2}$-approximation to the function $f$ from the
subspace $\mathcal{H}_{n}$ with weight function $e^{pt}$. Hence by inequality
(\ref{eqSecondSt}) we obtain
\[
\max_{\zeta\in\left[  t_{j},t_{j+1}\right]  }\left\vert f\left(  \zeta\right)
-f_{0}\left(  \zeta\right)  \right\vert \leq\left(  1+\left\Vert
P^{\mathcal{H}_{n}}\right\Vert _{\text{op}}\right)  \left\Vert f-I_{2}\left(
f\right)  \right\Vert _{\left[  t_{1},t_{n}\right]  }.
\]
Further
\[
\left\Vert f-I_{2}\left(  f\right)  \right\Vert _{\left[  t_{1},t_{n}\right]
}\leq\max_{j=1,...n}M_{\lambda_{0,j}\lambda_{1,j}}^{t_{j}t_{j+1}}\max
_{\zeta\in\left[  t_{j},t_{j+1}\right]  ,j=1,...,n-1}\left\vert L_{\left(
\lambda_{0,j},\lambda_{1,j}\right)  }f\left(  \zeta\right)  \right\vert
\]
This ends the proof.
\end{proof}

\begin{remark}
Note that the last proof also provides an estimate for the derivatives of
second order:
\begin{align*}
&  \max_{t\in\left[  t_{j},t_{j+1}\right]  }\left\vert D_{\lambda_{2,j}%
}D_{\lambda_{3,j}}\left[  F-I_{4}\left(  F\right)  \right]  \left(  t\right)
\right\vert \\
&  \leq\left(  1+\left\Vert P^{\mathcal{H}_{n}}\right\Vert _{\text{op}%
}\right)  \max_{x\in\left[  t_{j},t_{j+1}\right]  ,j=1,...,n-1}\left\vert
L_{\left(  \lambda_{0,j},\lambda_{1,j},\lambda_{2,j},\lambda_{3,j}\right)
}F\left(  x\right)  \right\vert .
\end{align*}

\end{remark}

Now we are able to derive our main result:

\begin{theorem}
Let $\xi_{j}$ for $j=1,...,n-1$ real numbers and $t_{1}<...<t_{n}.$ Let $F\in
C^{4}\left[  t_{1},t_{n}\right]  $ and assume that $I_{4}\left(  F\right)  $
is a piecewise exponential spline for the operators $L_{\left(  \xi_{j}%
,-\xi_{j},\xi_{j},-\xi_{j}\right)  }$ which interpolates $F$ at the points
$t_{1},...,t_{n}$ and
\[
\frac{d}{dt}F\left(  t_{1}\right)  =\frac{d}{dt}I_{4}\left(  F\right)  \left(
t_{1}\right)  \text{ and }\frac{d}{dt}F\left(  t_{n}\right)  =\frac{d}%
{dt}I_{4}\left(  F\right)  \left(  t_{n}\right)  .
\]
Then for any $t\in\left[  t_{1},t_{n}\right]  $ we have the estimate
\[
\left\vert F\left(  t\right)  -I_{4}\left(  F\right)  \left(  t\right)
\right\vert \leq\frac{\Delta^{4}}{16}\frac{5}{4}\max_{\zeta\in\left[
t_{j},t_{j+1}\right]  ,j=1,...,n-1}\left\vert L_{\left(  \xi_{j},-\xi_{j}%
,\xi_{j},-\xi_{j}\right)  }F\left(  \zeta\right)  \right\vert .
\]

\end{theorem}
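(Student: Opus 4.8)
The plan is to apply the general order-four estimate (the theorem immediately preceding the one in question) to the special case of the symmetric frequencies $\lambda_{0,j}=\xi_j$, $\lambda_{1,j}=-\xi_j$, $\lambda_{2,j}=-\xi_j$, $\lambda_{3,j}=\xi_j$. First I would verify that the compatibility condition required by that theorem holds with $p=0$: one needs a real $p$ with $-p-\lambda_{2,j}=\lambda_{0,j}$ and $-p-\lambda_{3,j}=\lambda_{1,j}$, and indeed for $p=0$ this reads $\xi_j=\xi_j$ and $-\xi_j=-\xi_j$, so the hypothesis is satisfied for every $j$. The operator $L_{(\lambda_{0,j},\lambda_{1,j},\lambda_{2,j},\lambda_{3,j})}$ then becomes exactly $L_{(\xi_j,-\xi_j,\xi_j,-\xi_j)}$, which is the operator appearing in the statement, so the interpolation problem matches.

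Next I would assemble the constant $C$ from the preceding theorem, namely
\[
C=\bigl(1+\lVert P^{\mathcal{H}_n}\rVert_{\mathrm{op}}\bigr)\cdot\max_{j=1,\dots,n-1}M^{t_j,t_{j+1}}_{\lambda_{2,j},\lambda_{3,j}}\cdot\max_{j=1,\dots,n}M^{t_j,t_{j+1}}_{\lambda_{0,j},\lambda_{1,j}},
\]
and bound each factor. For the operator-norm factor, Theorem \ref{CorLebsgueSym} (with the weight $e^{0\cdot t}=1$, i.e. $p=0$, which is exactly the case of symmetric frequencies treated there) gives $\lVert P^{\mathcal{H}_n}\rVert_{\mathrm{op}}\le 4$, hence $1+\lVert P^{\mathcal{H}_n}\rVert_{\mathrm{op}}\le 5$. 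For each of the two $M$-factors, I would invoke Theorem \ref{ThmMOmega}: since $\lambda_{2,j},\lambda_{3,j}$ are $\{-\xi_j,\xi_j\}$ in some order and likewise $\lambda_{0,j},\lambda_{1,j}$, the estimate $M^{a,b}_{\xi,-\xi}\le\frac18(b-a)^2$ applies (note $M^{a,b}_{\lambda_0,\lambda_1}$ is symmetric in the two frequencies because $\Omega$ depends only on the operator $L_{(\lambda_0,\lambda_1)}$, which is unchanged under swapping $\lambda_0,\lambda_1$). Therefore each $M$-factor is at most $\tfrac18\Delta^2$, where $\Delta=\max_j|t_{j+1}-t_j|$.

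Multiplying the three bounds gives $C\le 5\cdot\tfrac{\Delta^2}{8}\cdot\tfrac{\Delta^2}{8}=\tfrac{5\Delta^4}{64}=\tfrac{\Delta^4}{16}\cdot\tfrac54$, which is precisely the constant claimed in the statement. Substituting into the pointwise bound of the preceding theorem yields, for every $t\in[t_1,t_n]$ (so in particular for $t$ in any subinterval $[t_j,t_{j+1}]$),
\[
\bigl|F(t)-I_4(F)(t)\bigr|\le\frac{\Delta^4}{16}\cdot\frac54\,\max_{\zeta\in[t_j,t_{j+1}],\,j=1,\dots,n-1}\bigl|L_{(\xi_j,-\xi_j,\xi_j,-\xi_j)}F(\zeta)\bigr|,
\]
which is the desired inequality. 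I do not anticipate a genuine obstacle here: the work of the section has already been done, and this theorem is a corollary obtained by specializing parameters and inserting the three sharp auxiliary bounds. The only point requiring a moment's care is the observation that $M^{a,b}_{\lambda_0,\lambda_1}$ (and hence the bound $\tfrac18(b-a)^2$) is insensitive to the order of the two frequencies, so that it legitimately covers both the pair $(\xi_j,-\xi_j)$ and the pair $(-\xi_j,\xi_j)$; this follows immediately from the definition of $\Omega^{a,b}_{\lambda_0,\lambda_1,0}$ via the operator $L_{(\lambda_0,\lambda_1)}$, which is symmetric in $\lambda_0$ and $\lambda_1$.
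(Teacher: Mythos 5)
Your proposal is correct and follows essentially the same route as the paper: both apply the preceding general order-four theorem with $p=0$, bound the two $M$-factors by $\tfrac{1}{8}\Delta^{2}$ via Theorem \ref{ThmMOmega}, and bound the operator norm by $4$ via Theorem \ref{CorLebsgueSym}, giving $C\leq\tfrac{5}{64}\Delta^{4}$. Your extra remarks on the choice of $\lambda_{2,j},\lambda_{3,j}$ and the symmetry of $M^{a,b}_{\lambda_{0},\lambda_{1}}$ in the two frequencies are correct and in fact make the argument slightly more explicit than the paper's.
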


\begin{proof}
Theorem \ref{ThmMOmega} shows that $M_{\xi_{j},-\xi_{j}}^{t_{j}t_{j+1}}%
\leq\frac{1}{8}\left\vert t_{j+1}-t_{j}\right\vert ^{2}$, and Theorem
\ref{CorLebsgueSym} shows $\left\Vert P^{\mathcal{H}_{n}}\right\Vert \leq4.$
Hence
\[
C=\left(  1+\left\Vert P^{\mathcal{H}_{n}}\right\Vert \right)  \max
_{j=1,...n-1}M_{\xi_{j},-\xi_{j}}^{t_{j}t_{j+1}}\max_{j=1,...n}M_{\xi_{j}%
,-\xi_{j}}^{t_{j}t_{j+1}}\leq\frac{5}{64}\left\vert t_{j+1}-t_{j}\right\vert
^{4}.
\]

\end{proof}

\section{\label{S8}Appendix: Exponential polynomials}

For given complex numbers $\lambda_{0},...,\lambda_{N}$ the elements of the
space
\[
E\left(  \lambda_{0},...,\lambda_{N}\right)  =\left\{  f\in C^{N+1}\left(
\mathbb{R}\right)  :L_{\left(  \lambda_{0},\ldots,\lambda_{N}\right)
}f=0\right\}
\]
are called \emph{exponential polynomials} or $L$\emph{-polynomials, }and
$\lambda_{0},\ldots,\lambda_{N}$ are also called \emph{exponents} or
\emph{frequencies}. In the case of \emph{pairwise different} $\lambda
_{j},j=0,\ldots,N,$ the space$E\left(  \lambda_{0},...,\lambda_{N}\right)  $
is the linear span of the functions%
\[
e^{\lambda_{0}x},e^{\lambda_{1}x},\ldots,e^{\lambda_{N}x}.
\]
When $\lambda_{j}$ occurs $m_{j}$ times in $\Lambda_{N}=\left(  \lambda
_{0},\ldots,\lambda_{N}\right)  ,$ a basis of the space$E\left(  \lambda
_{0},...,\lambda_{N}\right)  $ is given by the linearly independent functions
$x^{s}e^{\lambda_{j}x}$ for $s=0,1,\ldots,m_{j}-1.$

There exists a unique function $\Phi_{\left(  \lambda_{0},...,\lambda
_{N}\right)  }$ in $E\left(  \lambda_{0},...,\lambda_{N}\right)  $ such that
\begin{equation}
\Phi_{\left(  \lambda_{0},...,\lambda_{N}\right)  }\left(  0\right)
=\ldots=\Phi_{\left(  \lambda_{0},...,\lambda_{N}\right)  }^{\left(
N-1\right)  }\left(  0\right)  =0\text{ and }\Phi_{\left(  \lambda
_{0},...,\lambda_{N}\right)  }^{\left(  N\right)  }\left(  0\right)  =1.
\label{eqfundamentfunc}%
\end{equation}
The function $\Phi_{\left(  \lambda_{0},...,\lambda_{N}\right)  }^{\left(
N-1\right)  }\left(  0\right)  $ is called the \emph{fundamental function. }An
explicit definition is the formula
\begin{equation}
\Phi_{\left(  \lambda_{0},...,\lambda_{N}\right)  }\left(  t\right)  =\frac
{1}{2\pi i}\int_{\Gamma_{r}}\frac{e^{tz}}{\left(  z-\lambda_{0}\right)
\cdots\left(  z-\lambda_{N}\right)  }dz, \label{defPhi}%
\end{equation}
where $\Gamma_{r}$ is the path in the complex plane defined by $\Gamma
_{r}\left(  t\right)  =re^{it}$, $t\in\left[  0,2\pi\right]  $, surrounding
all the complex numbers $\lambda_{0},\ldots,\lambda_{N},$ see \cite{micchelli}%
. The integral representation (\ref{defPhi}) implies the formula
\begin{equation}
\left(  \frac{d}{dt}-\lambda_{N+1}\right)  \Phi_{\left(  \lambda_{0}%
,\ldots,\lambda_{N+1}\right)  }\left(  t\right)  =\Phi_{\left(  \lambda
_{0},\ldots,\lambda_{N}\right)  }\left(  t\right)  . \label{rec0}%
\end{equation}
This formula for $\lambda_{N+1}=0$ and the fundamental theorem of calculus
yield:
\begin{equation}
\int_{0}^{h}\Phi_{\left(  \lambda_{0},\ldots,\lambda_{N}\right)  }\left(
t\right)  dt=\Phi_{\left(  \lambda_{0},\ldots,\lambda_{N},0\right)  }\left(
h\right)  \label{eqInt}%
\end{equation}

The following result is a well-known, see e.g. \cite{Sc81},
\cite{KounchevBOOK}.

\begin{proposition}
\label{PropCHebyshev} If $\lambda_{0},...,\lambda_{N}$ are real then the space
$E\left(  \lambda_{0},...,\lambda_{N}\right)  $ is an extended Chebyshev space
on $\mathbb{R}$, i.e. each non-zero function $f\in E\left(  \lambda
_{0},...,\lambda_{N}\right)  $ has at most $N$ zeros (including
multiplicities) on the real line.
\end{proposition}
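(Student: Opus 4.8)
The plan is to prove this classical disconjugacy statement by induction on $N$, using only the generalized Rolle theorem; the reality of the $\lambda_{j}$ will enter at exactly one point, namely to reduce to real-valued functions. First I would dispose of complex-valued $f$: since $\lambda_{0},\ldots,\lambda_{N}$ are real, $L_{\left(  \lambda_{0},\ldots,\lambda_{N}\right)  }$ has real coefficients, so together with $f$ its real part $f_{1}$ and imaginary part $f_{2}$ again lie in $E\left(  \lambda_{0},\ldots,\lambda_{N}\right)  $. A point $x_{0}$ is a zero of $f$ of multiplicity $k$ exactly when $f_{1}$ and $f_{2}$ both vanish to order at least $k$ at $x_{0}$; hence if a nonzero $f$ had at least $N+1$ zeros counted with multiplicity, so would whichever of $f_{1},f_{2}$ is nonzero. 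Thus it suffices to bound the zero count of a real-valued nonzero element, and reality is indispensable here: $\sin$, annihilated by $L_{\left(  i,-i\right)  }$, has infinitely many zeros.

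Then I would induct on $N$. For $N=0$ one has $E\left(  \lambda_{0}\right)  =\mathbb{C}\,e^{\lambda_{0}x}$, and a nonzero multiple of $e^{\lambda_{0}x}$ has no zeros. For the inductive step, assume the claim for $N-1$ and let $f\in E\left(  \lambda_{0},\ldots,\lambda_{N}\right)  $ be real-valued and nonzero with $m$ zeros counted with multiplicity. Put $g\left(  x\right)  =e^{-\lambda_{N}x}f\left(  x\right)  $; by the Leibniz rule, and since $e^{-\lambda_{N}x}$ never vanishes, $g$ has exactly the same zeros as $f$ with the same multiplicities. The generalized Rolle theorem --- at a zero of $g$ of multiplicity $k$ the derivative $g^{\prime}$ vanishes to order at least $k-1$, and strictly between two consecutive distinct zeros of $g$ the Mean Value Theorem produces one more zero of $g^{\prime}$ --- shows that $g^{\prime}$ has at least $m-1$ zeros counted with multiplicity. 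Since $g^{\prime}\left(  x\right)  =e^{-\lambda_{N}x}\left(  \tfrac{d}{dx}-\lambda_{N}\right)  f\left(  x\right)  $, the function $\left(  \tfrac{d}{dx}-\lambda_{N}\right)  f$ has at least $m-1$ zeros counted with multiplicity, and because the first-order factors of $L_{\left(  \lambda_{0},\ldots,\lambda_{N}\right)  }$ commute we get $L_{\left(  \lambda_{0},\ldots,\lambda_{N-1}\right)  }\!\left(  \left(  \tfrac{d}{dx}-\lambda_{N}\right)  f\right)  =L_{\left(  \lambda_{0},\ldots,\lambda_{N}\right)  }f=0$, that is, $\left(  \tfrac{d}{dx}-\lambda_{N}\right)  f\in E\left(  \lambda_{0},\ldots,\lambda_{N-1}\right)  $. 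If $\left(  \tfrac{d}{dx}-\lambda_{N}\right)  f\neq0$, the induction hypothesis forces $m-1\le N-1$; if $\left(  \tfrac{d}{dx}-\lambda_{N}\right)  f=0$, then $f$ is a nonzero multiple of $e^{\lambda_{N}x}$, so $m=0$. In both cases $m\le N$, which closes the induction.

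The argument is entirely routine, and I anticipate no genuine obstacle; the only step demanding a little care is the multiplicity bookkeeping in the Rolle step, which is standard. Alternatively one could simply cite \cite{Sc81} or \cite{KounchevBOOK}, where this extended-Chebyshev property is established in this generality.
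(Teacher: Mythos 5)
Your proof is correct. The paper itself gives no argument for this proposition: it is stated as well known with a pointer to \cite{Sc81} and \cite{KounchevBOOK}, which is exactly the fallback you mention in your last sentence. So the comparison is simply that you supply the standard self-contained proof where the authors cite. Your argument --- reduce to real-valued $f$ using that $L_{\left(\lambda_{0},\ldots,\lambda_{N}\right)}$ has real coefficients, then induct on $N$ by passing from $f$ to $g=e^{-\lambda_{N}x}f$, applying the generalized Rolle theorem to get at least $m-1$ zeros of $g^{\prime}=e^{-\lambda_{N}x}\left(\frac{d}{dx}-\lambda_{N}\right)f$, and invoking the inductive hypothesis on $E\left(\lambda_{0},\ldots,\lambda_{N-1}\right)$ --- is the classical disconjugacy proof and all the steps (the multiplicity bookkeeping in Rolle, the commutation of the first-order factors, the degenerate case $\left(\frac{d}{dx}-\lambda_{N}\right)f=0$) check out. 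One small imprecision in your framing: reality of the frequencies is used not only in the initial reduction to real-valued $f$ but also at every inductive step, since $g=e^{-\lambda_{N}x}f$ must be real-valued for the mean value theorem to apply; this is harmless here because all $\lambda_{j}$ are assumed real, but it is two uses of the hypothesis, not one.
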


\begin{remark}
\label{RemarkOMEGA} It is a simple and well-known consequence of the above
Proposition that for every choice of the numbers $t_{1}<...<t_{N}$ and the
data $y_{1},....,y_{N}$ there exists a unique $f\in E\left(  \lambda
_{0},...,\lambda_{N}\right)  $ with $f\left(  t_{j}\right)  =y_{j}$ for
$j=1,...,n$. Hence, for $\left(  \lambda_{0},\lambda_{1},0\right)
\in\mathbb{R}^{3},$ numbers $a<b$ and $t_{\ast}\in\left(  a,b\right)  $ there
exists $f\in E\left(  \lambda_{0},\lambda_{1},0\right)  $ such that $f\left(
a\right)  =f\left(  b\right)  =0$ and $f\left(  t_{\ast}\right)  =1.$ This
implies the existence of the function $\Omega_{\left(  \lambda_{0},\lambda
_{1},0\right)  }^{a,b}$ in section \ref{S4}.
\end{remark}

The fundamental function $\Phi_{\left(  \lambda_{0},...,\lambda_{N}\right)  }$
has a zero of order $N,$ hence it follows that $\Phi_{\left(  \lambda
_{0},...,\lambda_{N}\right)  }\left(  t\right)  \neq0$ for all $t\neq0.$

\begin{proposition}
\label{PropPhipositive}If $\lambda_{0},...,\lambda_{N}$ are real numbers then
$\Phi_{\left(  \lambda_{0},...,\lambda_{N}\right)  }\left(  t\right)  >0$ for
all $t>0.$
\end{proposition}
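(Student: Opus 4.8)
The plan is to combine two facts that are already at hand: first, that $\Phi_{\left(\lambda_{0},...,\lambda_{N}\right)}$ is nonzero on $\left(0,\infty\right)$, and second, that its sign near $0^{+}$ is $+$. The non-vanishing was just recorded above as a consequence of Proposition \ref{PropCHebyshev}: since $\lambda_{0},\dots,\lambda_{N}$ are real, $E\left(\lambda_{0},...,\lambda_{N}\right)$ is an extended Chebyshev space, and $\Phi_{\left(\lambda_{0},...,\lambda_{N}\right)}$ is a nonzero member of it possessing a zero of multiplicity exactly $N$ at the origin, hence it can have no further zero anywhere on $\mathbb{R}$. For the sign near the origin, recall from the normalization (\ref{eqfundamentfunc}) that $\Phi_{\left(\lambda_{0},...,\lambda_{N}\right)}^{\left(j\right)}\left(0\right)=0$ for $j=0,\dots,N-1$ and $\Phi_{\left(\lambda_{0},...,\lambda_{N}\right)}^{\left(N\right)}\left(0\right)=1$, so Taylor's theorem gives $\Phi_{\left(\lambda_{0},...,\lambda_{N}\right)}\left(t\right)=\tfrac{1}{N!}t^{N}+o\left(t^{N}\right)$ as $t\to0$, which is strictly positive for all sufficiently small $t>0$.

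Putting these together finishes the proof: $\Phi_{\left(\lambda_{0},...,\lambda_{N}\right)}$ is continuous and never vanishes on the connected set $\left(0,\infty\right)$, so it has constant sign there; since that sign equals $+$ on some interval $\left(0,\varepsilon\right)$, it is $+$ on all of $\left(0,\infty\right)$, i.e. $\Phi_{\left(\lambda_{0},...,\lambda_{N}\right)}\left(t\right)>0$ for every $t>0$.

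An alternative, fully self-contained route is induction on $N$. For $N=0$ one has $\Phi_{\left(\lambda_{0}\right)}\left(t\right)=e^{\lambda_{0}t}>0$. For the inductive step, put $g=\Phi_{\left(\lambda_{0},...,\lambda_{N+1}\right)}$; by the recursion (\ref{rec0}) we have $D_{\lambda_{N+1}}g=\Phi_{\left(\lambda_{0},...,\lambda_{N}\right)}$, which is $>0$ on $\left(0,\infty\right)$ by the inductive hypothesis, while $g\left(0\right)=0$ by (\ref{eqfundamentfunc}); Lemma \ref{LemPos1}, applied with $a=0$, $\lambda=\lambda_{N+1}$ and $f=g$, then gives $g\left(t\right)>0$ for all $t>0$. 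There is no serious obstacle in either approach; the only points deserving a line of care are that the zero of $\Phi_{\left(\lambda_{0},...,\lambda_{N}\right)}$ at the origin has multiplicity \emph{exactly} $N$ (so that Proposition \ref{PropCHebyshev} rules out any additional zero), and, in the inductive variant, that the hypotheses of Lemma \ref{LemPos1} are indeed verified.
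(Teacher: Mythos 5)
Your proposal is correct and follows exactly the route the paper sets up: the paper gives no explicit proof, but the sentence immediately preceding the Proposition (that $\Phi_{\left(\lambda_{0},...,\lambda_{N}\right)}$ has a zero of order $N$ at the origin and hence, by the extended Chebyshev property of Proposition \ref{PropCHebyshev}, cannot vanish for $t\neq0$) is precisely your first argument, which you complete correctly by pinning down the sign near $0^{+}$ via the normalization $\Phi^{\left(N\right)}\left(0\right)=1$. Your inductive alternative via (\ref{rec0}) and Lemma \ref{LemPos1} is also valid and is a nice self-contained bonus, though it is not what the paper relies on.
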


\begin{lemma}
For real $\lambda_{0},\lambda_{1}$ the following identities hold:
\begin{align}
\int_{0}^{h}\left\vert \Phi_{\left(  \lambda_{0},\lambda_{1}\right)  }\left(
t\right)  \right\vert ^{2}e^{pt}dt  &  =2e^{ph}\Phi_{\left(  2\lambda
_{0},2\lambda_{1},\lambda_{0}+\lambda_{1},-p\right)  }\left(  h\right)
\label{eqId1}\\
\int_{0}^{h}\Phi_{\left(  \lambda_{0},\lambda_{1}\right)  }\left(  t-h\right)
^{2}e^{pt}dt  &  =-2\Phi_{\left(  -2\lambda_{0},-2\lambda_{1},-\lambda
_{0}-\lambda_{1},p\right)  }\left(  h\right) \label{eqId2}\\
\int_{0}^{h}\Phi_{\left(  \lambda_{0},\lambda_{1}\right)  }\left(  t-h\right)
e^{pt}dt  &  =-\Phi_{\left(  -\lambda_{0},-\lambda_{1},p\right)  }\left(
h\right)  \label{eqId3}%
\end{align}

\end{lemma}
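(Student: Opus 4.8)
The plan is to reduce all three identities to a single ``master'' integration formula for one fundamental function, after first recognizing the square $\Phi_{(\lambda_0,\lambda_1)}^2$ as a multiple of a fundamental function of three frequencies. Since $\lambda_0,\lambda_1$ are real, $\Phi_{(\lambda_0,\lambda_1)}$ is real-valued, so $\left|\Phi_{(\lambda_0,\lambda_1)}\right|^2=\Phi_{(\lambda_0,\lambda_1)}^2$ and the modulus in (\ref{eqId1}) is harmless.

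First I would establish the squaring identity
\[
\Phi_{(\lambda_0,\lambda_1)}^2=2\,\Phi_{(2\lambda_0,\lambda_0+\lambda_1,2\lambda_1)}.
\]
The product of two exponential polynomials is again one, with frequencies all pairwise sums of the factors' frequencies; hence $\Phi_{(\lambda_0,\lambda_1)}^2\in E(2\lambda_0,\lambda_0+\lambda_1,2\lambda_1)$, a space of dimension $3$. Using $\Phi_{(\lambda_0,\lambda_1)}(0)=0$ and $\Phi_{(\lambda_0,\lambda_1)}'(0)=1$ one checks that $\Phi_{(\lambda_0,\lambda_1)}^2$ vanishes to second order at $0$ with second derivative $2$; by the normalization (\ref{eqfundamentfunc}) the function $2\Phi_{(2\lambda_0,\lambda_0+\lambda_1,2\lambda_1)}$ has the same $2$-jet at $0$. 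Their difference lies in $E(2\lambda_0,\lambda_0+\lambda_1,2\lambda_1)$ and has a zero of order $3$ at $0$; since this is an extended Chebyshev space (Proposition \ref{PropCHebyshev}) a nonzero element has at most $2$ zeros counting multiplicity, so the difference vanishes.

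Next I would prove the two master formulas, valid for any real tuple $\nu=(\nu_0,\dots,\nu_k)$:
\[
\int_0^h \Phi_{(\nu_0,\dots,\nu_k)}(t)\,e^{pt}\,dt=\Phi_{(p+\nu_0,\dots,p+\nu_k,0)}(h),
\]
\[
\int_0^h \Phi_{(\nu_0,\dots,\nu_k)}(t-h)\,e^{pt}\,dt=(-1)^k\,\Phi_{(-\nu_0,\dots,-\nu_k,p)}(h).
\]
The first is immediate from the translation rule (\ref{eqTrans}) followed by (\ref{eqInt}). For the second I substitute $s=t-h$, pull out $e^{ph}$, apply (\ref{eqTrans}) to absorb $e^{ps}$, integrate from $-h$ to $0$ via (\ref{eqInt}) (the primitive vanishing at $0$), and finally use the reflection rule (\ref{eqMinus}) together with a second application of (\ref{eqTrans}) to return to argument $h$; the $k+2$ frequencies of the primitive contribute the parity $(-1)^{k+1}$, which combines with the sign from the lower limit to leave $(-1)^k$.

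Finally the three identities follow by specialization. Identity (\ref{eqId3}) is the second master formula with $\nu=(\lambda_0,\lambda_1)$, where $k=1$ produces the minus sign. Identity (\ref{eqId1}) is the squaring identity inserted into the first master formula with $\nu=(2\lambda_0,\lambda_0+\lambda_1,2\lambda_1)$, followed by one more use of (\ref{eqTrans}) to generate the factor $e^{ph}$. Identity (\ref{eqId2}) is the squaring identity inserted into the second master formula for the cubic fundamental function, where $k=2$. The main obstacle throughout is the sign bookkeeping: the parity factor $(-1)^k$ from the reflection rule (\ref{eqMinus}) is exactly what distinguishes the sign in (\ref{eqId3}) (odd number of frequencies) from that in (\ref{eqId2}) (even number), so the coefficient in (\ref{eqId2}) should be checked with particular care against this parity, verifying the count of frequencies at each application.
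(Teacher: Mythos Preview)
Your approach is essentially the paper's: both establish the squaring identity $\Phi_{(\lambda_0,\lambda_1)}^2=2\Phi_{(2\lambda_0,\lambda_0+\lambda_1,2\lambda_1)}$ and then reduce everything to (\ref{eqTrans}), (\ref{eqInt}), and (\ref{eqMinus}); your two ``master formulas'' merely repackage those same steps, and your Chebyshev argument for the squaring identity replaces the paper's explicit computation for $\lambda_0\neq\lambda_1$ followed by a limit.

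One remark on the sign you flag at the end. Your second master formula with $k=2$ yields $+2\,\Phi_{(-2\lambda_0,-2\lambda_1,-\lambda_0-\lambda_1,p)}(h)$ for (\ref{eqId2}), not the stated $-2$. This is not an error on your part: the paper's own derivation via the substitution $\tau=h-t$ also arrives at $+2$, and the sanity check $\lambda_0=\lambda_1=p=0$, $h>0$ (left side $\int_0^h(t-h)^2\,dt=h^3/3>0$, stated right side $-2\Phi_{(0,0,0,0)}(h)=-h^3/3$) confirms that the sign in (\ref{eqId2}) is a misprint. So your parity count is correct; just state the conclusion explicitly rather than leaving it as a caution.
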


\begin{proof}
For $\lambda_{0}\neq\lambda_{1}$ real, the function
\[
f\left(  t\right)  :=\Phi_{\left(  \lambda_{0},\lambda_{1}\right)  }\left(
t\right)  ^{2}=\frac{1}{\left(  \lambda_{1}-\lambda_{0}\right)  ^{2}}\left(
e^{2\lambda_{1}t}-2e^{\left(  \lambda_{1}+\lambda_{0}\right)  t}%
+e^{2\lambda_{0}t}\right)  .
\]
is an exponential polynomial in $E\left(  2\lambda_{1},2\lambda_{2}%
,\lambda_{1}+\lambda_{0}\right)  $ such that $f\left(  0\right)  =f^{\prime
}\left(  0\right)  =0$ and $f^{\prime\prime}\left(  0\right)  =2.$ Thus
\[
\Phi_{\left(  \lambda_{0},\lambda_{1}\right)  }\left(  t\right)  ^{2}%
=2\Phi_{\left(  2\lambda_{0},2\lambda_{1},\lambda_{0}+\lambda_{1}\right)
}\left(  t\right)  .
\]
This formula is also valid for $\lambda_{0}=\lambda_{1}$ by a limit argument
in $\left(  \lambda_{0},\lambda_{1}\right)  $. With (\ref{eqTrans}) we see
that
\[
\left\vert \Phi_{\left(  \lambda_{0},\lambda_{1}\right)  }\left(  t\right)
\right\vert ^{2}e^{pt}=2e^{pt}\Phi_{\left(  2\lambda_{0},2\lambda_{1}%
,\lambda_{0}+\lambda_{1}\right)  }\left(  t\right)  =2\Phi_{\left(
p+2\lambda_{0},p+2\lambda_{1},p+\lambda_{0}+\lambda_{1}\right)  }\left(
t\right)
\]
Integration as in (\ref{eqInt}) shows that
\[
\int_{0}^{h}\left\vert \Phi_{\left(  \lambda_{0},\lambda_{1}\right)  }\left(
t\right)  \right\vert ^{2}e^{pt}dt=2\Phi_{\left(  p+2\lambda_{0}%
,p+2\lambda_{1},p+\lambda_{0}+\lambda_{1},0\right)  }\left(  h\right)
\]
which with (\ref{eqTrans}) gives (\ref{eqId1}).

Note that substitution $\tau_{1}=h-t$ yields
\[
I:=\int_{0}^{h}\Phi_{\left(  \lambda_{0},\lambda_{1}\right)  }\left(
t-h\right)  ^{2}e^{pt}dt=e^{ph}\int_{0}^{h}\Phi_{\left(  \lambda_{0}%
,\lambda_{1}\right)  }\left(  -\tau\right)  ^{2}e^{-pt}d\tau.
\]
By (\ref{eqMinus}), $\Phi_{\left(  \lambda_{0},\lambda_{1}\right)  }\left(
-\tau\right)  =-\Phi_{\left(  -\lambda_{0},-\lambda_{1}\right)  }\left(
\tau\right)  .$ Using (\ref{eqId1}) for $\left(  -\lambda_{0},-\lambda
_{1}\right)  $ and $-p$ shows that
\[
I=e^{ph}2\Phi_{\left(  -p-2\lambda_{0},-p-2\lambda_{1},-p-\lambda_{0}%
-\lambda_{1},0\right)  }\left(  h\right)  =2\Phi_{\left(  -2\lambda
_{0},-2\lambda_{1},-\lambda_{0}-\lambda_{1},p\right)  }\left(  h\right)  ,
\]
which proves (\ref{eqId2}). The case (\ref{eqId3}) is similar and left to the reader.
\end{proof}

Let $g:\left[  0,b\right]  \rightarrow\mathbb{C}$ be differentiable and
$f:\left[  0,b\right]  \rightarrow\mathbb{C}$ continuous. Then it is well
known that
\[
A\left(  y\right)  =\int_{0}^{y}f\left(  t\right)  g\left(  y-t\right)  dt
\]
is differentiable and
\[
\frac{d}{dy}\int_{0}^{y}f\left(  t\right)  g\left(  y-t\right)  dt=f\left(
y\right)  g\left(  0\right)  +\int_{0}^{y}f\left(  t\right)  \left(  \frac
{d}{dy}g\right)  \left(  y-t\right)  dy.
\]
For the differential operator $D_{\lambda}=\frac{d}{dy}-\lambda$ it is
straightforward to verify that
\[
D_{\lambda}\int_{0}^{y}f\left(  t\right)  g\left(  y-t\right)  dt=f\left(
y\right)  g\left(  0\right)  +\int_{0}^{y}f\left(  t\right)  D_{\lambda
}g\left(  y-t\right)  dy
\]
The next result follows by induction.

\begin{proposition}
\label{PropRecInt2}Let $g:\left[  0,b\right]  \rightarrow\mathbb{C}$ be $k+1$
times continuously differentiable function with $g^{\left(  l\right)  }\left(
0\right)  =0$ for $l=0,...,k$ and assume that $f:\left[  0,b\right]
\rightarrow\mathbb{C}$ is continuous. Then
\[
A\left(  y\right)  =\int_{0}^{y}f\left(  t\right)  g\left(  y-t\right)  dt
\]
is $k+1$ times differentiable and
\[
D_{\lambda_{k}}\cdots D_{\lambda_{0}}A\left(  y\right)  =\int_{0}^{y}f\left(
t\right)  \left(  D_{\lambda_{k}}\cdots D_{\lambda_{0}}g\right)  \left(
y-t\right)  dy.
\]

\end{proposition}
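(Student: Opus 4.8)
The plan is to prove the statement by induction on $k$, the key tool being the single-operator convolution identity
\[
D_{\lambda}\int_{0}^{y}f(t)g(y-t)\,dt=f(y)g(0)+\int_{0}^{y}f(t)\,(D_{\lambda}g)(y-t)\,dt
\]
established just above the statement for differentiable $g$ and continuous $f$. First I would treat the base case $k=0$: here the hypothesis says merely that $g\in C^{1}[0,b]$ with $g(0)=0$. Then $A$ is differentiable by the preceding remarks, and applying the identity above with $\lambda=\lambda_{0}$, the boundary term $f(y)g(0)$ vanishes, leaving $D_{\lambda_{0}}A(y)=\int_{0}^{y}f(t)(D_{\lambda_{0}}g)(y-t)\,dt$, which is the claim for $k=0$.

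For the inductive step I would assume the statement for $k-1$ (for an arbitrary choice of $k$ constants and an arbitrary $C^{k}$ kernel vanishing together with its first $k-1$ derivatives at $0$). Given $g\in C^{k+1}[0,b]$ with $g^{(l)}(0)=0$ for $l=0,\dots,k$, put $\tilde g:=D_{\lambda_{0}}g$. Then $\tilde g\in C^{k}[0,b]$, and from $\tilde g^{(l)}=g^{(l+1)}-\lambda_{0}g^{(l)}$ one reads off $\tilde g^{(l)}(0)=0$ for $l=0,\dots,k-1$, so $\tilde g$ satisfies the hypotheses of the proposition with $k$ replaced by $k-1$. By the single-operator identity (again using $g(0)=0$) we get $D_{\lambda_{0}}A(y)=\int_{0}^{y}f(t)\tilde g(y-t)\,dt=:\tilde A(y)$. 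Applying the induction hypothesis to $\tilde A$ with the constants $\lambda_{1},\dots,\lambda_{k}$ yields that $\tilde A$ is $k$ times differentiable and
\[
D_{\lambda_{k}}\cdots D_{\lambda_{1}}\tilde A(y)=\int_{0}^{y}f(t)\,(D_{\lambda_{k}}\cdots D_{\lambda_{1}}\tilde g)(y-t)\,dt .
\]
Consequently $A$ is $k+1$ times differentiable, and composing with $D_{\lambda_{0}}$ gives
\[
D_{\lambda_{k}}\cdots D_{\lambda_{0}}A(y)=D_{\lambda_{k}}\cdots D_{\lambda_{1}}\tilde A(y)=\int_{0}^{y}f(t)\,(D_{\lambda_{k}}\cdots D_{\lambda_{0}}g)(y-t)\,dt ,
\]
since $D_{\lambda_{k}}\cdots D_{\lambda_{1}}\tilde g=D_{\lambda_{k}}\cdots D_{\lambda_{0}}g$. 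This closes the induction.

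The only mildly delicate point — and hence what I would flag as the ``main obstacle'', such as it is — is the bookkeeping on the vanishing conditions: one must verify at each stage that $\tilde g=D_{\lambda_{0}}g$ still vanishes to the appropriate order at $0$, since this is exactly what makes the boundary term $f(y)g(0)$ drop out and lets the induction proceed without error terms accumulating. Everything else is a routine application of differentiation under the integral sign for a convolution-type integral, already recorded in the lines preceding the statement, so no new analytic input is needed.
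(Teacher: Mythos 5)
Your proof is correct and follows exactly the route the paper intends: the paper simply remarks that the proposition ``follows by induction'' from the single-operator convolution identity, and your argument supplies precisely that induction, including the necessary check that $\tilde g = D_{\lambda_0}g$ inherits the vanishing conditions $\tilde g^{(l)}(0)=0$ for $l=0,\dots,k-1$ so the boundary term $f(y)g(0)$ drops out at every stage. Nothing further is needed.
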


\begin{theorem}
\label{ThmApp1}Let $\lambda_{0},...,\lambda_{n},\lambda_{n+1},...,\lambda
_{n+m}$ be complex numbers. Then the following identity holds:
\begin{equation}
\int_{0}^{y}\Phi_{\left(  \lambda_{0},...,\lambda_{n}\right)  }\left(
t\right)  \Phi_{\left(  \lambda_{n+1},...,\lambda_{n+m}\right)  }\left(
y-t\right)  dt=\Phi_{\left(  \lambda_{0},...,\lambda_{n+m}\right)  }\left(
y\right)  . \label{eqgrand}%
\end{equation}

\end{theorem}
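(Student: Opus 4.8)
The plan is to prove the convolution identity (\ref{eqgrand}) by induction on the total number $n+m$ of frequencies, using the recursion (\ref{rec0}) as the driving tool, together with the characterization of $\Phi_{(\mu_0,\dots,\mu_k)}$ by its initial conditions (\ref{eqfundamentfunc}). First I would treat the base case: when there is a single frequency on each side, say $\Phi_{(\lambda_0)}(t)=e^{\lambda_0 t}$ and $\Phi_{(\lambda_1)}(s)=e^{\lambda_1 s}$, the integral $\int_0^y e^{\lambda_0 t}e^{\lambda_1(y-t)}\,dt$ is an elementary computation giving exactly $\Phi_{(\lambda_0,\lambda_1)}(y)=\frac{e^{\lambda_1 y}-e^{\lambda_0 y}}{\lambda_1-\lambda_0}$ (and the coincident case $\lambda_0=\lambda_1$ by the limit argument, or directly). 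Even simpler, one can start from $m=0$ where the right factor is $\Phi_{()}\equiv$ the "empty product", but it is cleaner to anchor the induction at $n=m=0$ meaning one exponent on each side, or at $m=1$; I would pick whichever makes the bookkeeping lightest, most likely $\Phi_{(\lambda_{n+1})}(s)=e^{\lambda_{n+1}s}$ as the right factor and induct on $n$.

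The key step is the inductive move. Fix the identity for convolutions whose right factor has $m$ exponents and suppose we want it with $m+1$ exponents on the right. Write $A(y)=\int_0^y \Phi_{(\lambda_0,\dots,\lambda_n)}(t)\,\Phi_{(\lambda_{n+1},\dots,\lambda_{n+m+1})}(y-t)\,dt$. Since $\Phi_{(\lambda_{n+1},\dots,\lambda_{n+m+1})}$ vanishes to order $m$ at $0$, Proposition \ref{PropRecInt2} applies with $g=\Phi_{(\lambda_{n+1},\dots,\lambda_{n+m+1})}$: it gives
\[
D_{\lambda_{n+m+1}}A(y)=\int_0^y \Phi_{(\lambda_0,\dots,\lambda_n)}(t)\,\bigl(D_{\lambda_{n+m+1}}\Phi_{(\lambda_{n+1},\dots,\lambda_{n+m+1})}\bigr)(y-t)\,dt,
\]
and by (\ref{rec0}) the inner factor is $\Phi_{(\lambda_{n+1},\dots,\lambda_{n+m})}(y-t)$. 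By the induction hypothesis this equals $\Phi_{(\lambda_0,\dots,\lambda_{n+m})}(y)$. Hence $D_{\lambda_{n+m+1}}A=\Phi_{(\lambda_0,\dots,\lambda_{n+m})}$, which is precisely the recursion (\ref{rec0}) satisfied by $\Phi_{(\lambda_0,\dots,\lambda_{n+m+1})}$. Thus $A$ and $\Phi_{(\lambda_0,\dots,\lambda_{n+m+1})}$ both solve the first-order ODE $D_{\lambda_{n+m+1}}u=\Phi_{(\lambda_0,\dots,\lambda_{n+m})}$, so they differ by a constant multiple of $e^{\lambda_{n+m+1}y}$. To pin down that constant, I would check the value and enough derivatives at $y=0$: the convolution $A(y)$ and its first $n+m$ derivatives vanish at $0$ because $\Phi_{(\lambda_0,\dots,\lambda_n)}$ vanishes to order $n$ at $0$ and $\Phi_{(\lambda_{n+1},\dots,\lambda_{n+m+1})}$ to order $m$, while $\Phi_{(\lambda_0,\dots,\lambda_{n+m+1})}$ also vanishes to order $n+m+1\ge n+m$ at $0$; matching the lowest nonvanishing data forces the homogeneous correction term to be $0$.

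The main obstacle is the bookkeeping at $y=0$: one must verify carefully that differentiating the convolution $A(y)$ up through order $n+m$ produces only terms that vanish at $y=0$, so that the constant of integration is forced to be zero rather than merely constrained. This is handled by the general Leibniz-type formula for differentiating $\int_0^y f(t)g(y-t)\,dt$ (the boundary terms all carry a factor $g^{(l)}(0)$ or $f^{(l)}(0)$ with $l$ small, hence vanish under our order-of-vanishing hypotheses), i.e. exactly the mechanism behind Proposition \ref{PropRecInt2}; indeed one can iterate Proposition \ref{PropRecInt2} itself with $g=\Phi_{(\lambda_{n+1},\dots,\lambda_{n+m+1})}$ and $\lambda$'s chosen to be $\lambda_{n+1},\dots,\lambda_{n+m}$ to see that $D_{\lambda_{n+m}}\cdots D_{\lambda_{n+1}}A(y)=\int_0^y\Phi_{(\lambda_0,\dots,\lambda_n)}(t)\,\Phi_{(\lambda_{n+m+1})}(y-t)\,dt$, reducing everything to the two-exponent base computation and a clean induction on $n$. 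Once the constant is shown to vanish, the identity (\ref{eqgrand}) follows, completing the proof.
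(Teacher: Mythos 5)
Your argument is correct and is essentially the paper's own proof: both rest on Proposition \ref{PropRecInt2} together with the recursion (\ref{rec0}) and the characterization of $\Phi_{(\lambda_0,\dots,\lambda_{n+m})}$ by its vanishing initial data. The only difference is organizational --- you peel off one frequency at a time by induction on $m$ and solve a first-order ODE at each step, whereas the paper applies the whole chain $D_{\lambda_{n+1}}\cdots D_{\lambda_{n+m}}$ at once and then verifies directly that $A$ satisfies all the defining initial conditions and lies in $E(\lambda_0,\dots,\lambda_{n+m})$.
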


\begin{proof}
Let us denote the integral by $A\left(  y\right)  $. Define $f\left(
t\right)  =\Phi_{\left(  \lambda_{0},...,\lambda_{n}\right)  }\left(
t\right)  $ and $g\left(  y\right)  =\Phi_{\left(  \lambda_{n+1}%
,...,\lambda_{n+m}\right)  }\left(  y\right)  .$Then $g^{\left(  l\right)
}\left(  0\right)  =0$ for $l=0,....,m-1$ and $g^{\left(  m\right)  }\left(
0\right)  =1.$ Proposition \ref{PropRecInt2} (for $k=m-1)$ shows that
\[
D_{\lambda_{n+l}}....D_{\lambda_{n+m}}A\left(  y\right)  =\int_{0}^{y}f\left(
t\right)  D_{\lambda_{n+l}}....D_{\lambda_{n+m}}\Phi_{\left(  \lambda
_{n+1},...,\lambda_{n+m}\right)  }\left(  y-t\right)  dt
\]
for each $l=2,....,m$, and we conclude that $A^{\left(  k\right)  }\left(
0\right)  =0$ for $k=0,...,m-1.$ For $l=2$ we obtain
\[
G\left(  y\right)  :=D_{\lambda_{n+2}}....D_{\lambda_{n+m}}A\left(  y\right)
=\int_{0}^{y}f\left(  t\right)  \Phi_{\left(  \lambda_{n+1}\right)  }\left(
y-t\right)  dt.
\]
Proposition \ref{PropRecInt2} applied to $g_{2}\left(  y\right)
=\Phi_{\left(  \lambda_{n+1}\right)  }\left(  y\right)  $ shows that
\[
D_{\lambda_{n+1}}G\left(  y\right)  =f\left(  y\right)  g_{2}\left(  0\right)
+\int_{0}^{y}f\left(  t\right)  D_{\lambda_{n+1}}g_{2}\left(  y-t\right)  dy.
\]
The last integral vanishes since $D_{\lambda_{n+1}}g_{2}=0$. Since
$g_{2}\left(  0\right)  =1$ and $f\left(  t\right)  =\Phi_{\left(  \lambda
_{0},...,\lambda_{n}\right)  }\left(  t\right)  $ we obtain
\[
D_{\lambda_{n+1}}....D_{\lambda_{n+m}}A\left(  y\right)  =\Phi_{\left(
\lambda_{0},...,\lambda_{n}\right)  }\left(  y\right)  .
\]
Thus $A^{\left(  k\right)  }\left(  0\right)  =0$ for all $k=0,....,n+m,$ and
$A^{\left(  n+m\right)  }\left(  0\right)  =1.$ In order to show that
$A\left(  y\right)  =\Phi_{\left(  \lambda_{0},...,\lambda_{n+m}\right)
}\left(  y\right)  $ it suffices to show that it an exponential polynomial
with frequencies $\lambda_{0},...,\lambda_{n+m},$ but this is clear since
\[
D_{\lambda_{0}}....D_{\lambda_{n}}D_{\lambda_{n+1}}....D_{\lambda_{n+m}%
}A\left(  y\right)  =D_{\lambda_{0}}....D_{\lambda_{n}}\Phi_{\left(
\lambda_{0},...,\lambda_{n}\right)  }\left(  y\right)  =0.
\]

\end{proof}

\begin{lemma}
\label{Lem1App}Let $a$ be negative number and $b$ a real number. Then the
function $F_{\left(  a,b\right)  }$ defined by
\[
F_{\left(  a,b\right)  }\left(  t\right)  =2C_{a,b}\Phi_{\left(
a-1,1-a,0,b\right)  }\left(  t\right)  -\Phi_{\left(  a,1,-a,-1\right)
}\left(  t\right)
\]
is positive for all $t>0$, where we have put
\begin{equation}
C_{a,b}:=\max\left\{  \frac{1}{2},\frac{2-a-b}{2\left(  2-a\right)  }%
,\frac{2a^{2}+\left(  2a-1\right)  b+2-3a}{2\left(  1-2a\right)  \left(
2-a\right)  },\frac{1-b}{2\left(  1-2a\right)  }\right\}  . \label{eqCondC}%
\end{equation}

\end{lemma}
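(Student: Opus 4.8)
The plan is to mimic the strategy used in the proof of Theorem \ref{ThmSbound}: show that $F_{(a,b)}$ vanishes to high order at $t=0$, and then push derivatives through the operators $D_\lambda$ until we reach an expression whose sign is manifest, so that repeated application of Lemma \ref{LemPos1} lifts positivity back up. First I would record that $\Phi_{(a,1,-a,-1)}$ and $\Phi_{(a-1,1-a,0,b)}$ are fundamental functions of fourth-order operators, hence both vanish together with their first two derivatives at $t=0$, and have third derivative equal to $1$; consequently $F_{(a,b)}(0)=F_{(a,b)}'(0)=F_{(a,b)}''(0)=0$ and $F_{(a,b)}'''(0)=2C_{a,b}-1\geq 0$ because $C_{a,b}\geq \tfrac12$ by the first term in the maximum \eqref{eqCondC}. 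So it suffices to prove that $F_{(a,b)}'''(t)>0$ for $t>0$, or failing that, to descend one more level and control $F_{(a,b)}^{(4)}$.

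The main computational step is to apply the recursion \eqref{rec0}, $D_{\lambda_{N+1}}\Phi_{(\lambda_0,\ldots,\lambda_{N+1})}=\Phi_{(\lambda_0,\ldots,\lambda_N)}$, to strip off frequencies one at a time. I would compute $D_{b}F_{(a,b)}'''$ — more precisely, apply a carefully chosen sequence of $D_\lambda$'s to $F_{(a,b)}'''$ so that the $\Phi_{(a-1,1-a,0,b)}$ term collapses to a constant (or to $\Phi_{(a-1,1-a)}(t)=\tfrac{e^{(1-a)t}-e^{(a-1)t}}{2(1-a)}$, i.e. a hyperbolic sine), while the $\Phi_{(a,1,-a,-1)}$ term becomes an explicit combination of exponentials $e^{at},e^{t},e^{-at},e^{-t}$. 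At each stage the "boundary'' constant produced is exactly one of the four quantities appearing in the maximum defining $C_{a,b}$ in \eqref{eqCondC}; the role of taking $C_{a,b}$ to be at least each of these is precisely to guarantee that the constant term left over after differentiation has the correct (nonnegative) sign at $t=0$, so that Lemma \ref{LemPos1} applies. Concretely, I expect to show by a finite induction that $G_j(t):=D_{\mu_j}\cdots D_{\mu_1}F_{(a,b)}^{(\text{suitable order})}(t)$ is positive for $t>0$ for a descending chain of auxiliary functions, the base case being an expression of the form $(\text{const})\cdot\Phi_{(\text{few real frequencies})}(t)$ which is positive for $t>0$ by Proposition \ref{PropPhipositive}, and the induction step being: $G_{j+1}(0)\geq 0$ (this is where a specific clause of \eqref{eqCondC} is used) together with $D_{\mu_{j+1}}G_{j+1}=G_j>0$ forces $G_{j+1}>0$ via Lemma \ref{LemPos1}.

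The delicate bookkeeping — and the main obstacle — is choosing the right shifts $\mu_j$ and the right order in which to differentiate, and then verifying that the four candidate constants $\tfrac12$, $\tfrac{2-a-b}{2(2-a)}$, $\tfrac{2a^2+(2a-1)b+2-3a}{2(1-2a)(2-a)}$, $\tfrac{1-b}{2(1-2a)}$ are exactly the four lower bounds on $C_{a,b}$ that emerge (each with the denominators $2-a>0$ and $1-2a>0$ positive precisely because $a<0$). One clean way to organize this: write $\Phi_{(a,1,-a,-1)}$ and $\Phi_{(a-1,1-a,0,b)}$ in closed form as linear combinations of exponentials, form $F_{(a,b)}$ explicitly, and then differentiate. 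Because $E(a,1,-a,-1)$ and $E(a-1,1-a,0,b)$ together span a space of exponential polynomials with frequencies among $\{a,-a,\pm1,a-1,1-a,0,b\}$, the function $F_{(a,b)}$ and all its derivatives live in a fixed finite-dimensional space, so the descent terminates after at most a bounded number of steps. I would carry out the differentiations symbolically, read off at each step the condition "new constant term $\geq 0$'', collect these into the four inequalities, observe that they are equivalent to $C_{a,b}$ dominating the four listed fractions, and conclude. The only real risk is an algebra slip in identifying which clause of \eqref{eqCondC} pairs with which stage of the descent; everything else is forced by Lemma \ref{LemPos1}, Proposition \ref{PropPhipositive}, and the recursion \eqref{rec0}.
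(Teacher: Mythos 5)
Your overall strategy is the paper's: exploit the third-order vanishing of $F_{(a,b)}$ at $t=0$, descend through a chain of first-order operators, check a boundary value at each stage (these checks are where the four clauses of \eqref{eqCondC} come from), and lift positivity back up by repeated use of Lemma \ref{LemPos1}. Your opening observations are also correct: both fundamental functions vanish to order $3$ at $0$ with third derivative $1$, so $F_{(a,b)}'''(0)=2C_{a,b}-1\geq0$ by the first clause, and the denominators $2-a$ and $1-2a$ are positive because $a<0$.

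The gap is that the entire content of the lemma lives in the part you defer as ``delicate bookkeeping'': exhibiting a specific chain of operators for which the descent terminates in a manifestly positive expression \emph{and} produces exactly the four listed constants as boundary conditions. A naive stripping of frequencies in the $t$-variable does not obviously do this: after $D_bD_0$ (which collapses $\Phi_{(a-1,1-a,0,b)}$ to $\Phi_{(a-1,1-a)}$ via \eqref{rec0}) the remaining function involves the six frequencies $\pm(1-a),\pm1,\pm a$, so one would expect more intermediate sign checks than the four clauses provide, and an arbitrary ordering of the remaining $D_\mu$'s will generate boundary constants not on your list. The paper resolves this by substituting $v=e^{-at}$ and interleaving differentiation in $v$ with multiplication or division by the specific powers $v^{1-1/a}$, $v^{2+1/a}$ and $v\left(1-\tfrac1a\right)$; each such step is a first-order operator $D_\mu$ in disguise (with $\mu=a-1$, etc.) times a positive prefactor, and it is precisely this choice that makes the descent close after four checks, matching the clauses $\tfrac12$, $\tfrac{2-a-b}{2(2-a)}$, $\tfrac{2a^2+(2a-1)b+2-3a}{2(1-2a)(2-a)}$, $\tfrac{1-b}{2(1-2a)}$ in that order. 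Note also that the terminal step is not, as you anticipate, positivity of a fundamental function via Proposition \ref{PropPhipositive}, but the direct inequality $Cv>\tfrac{1-b}{2-4a}$ for $v>1$, i.e.\ the fourth clause is consumed at the very last stage rather than at a boundary point. So the architecture is right, but without producing this (or an equivalent) explicit chain, the statement is not proved.
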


\begin{proof}
In view of Lemma \ref{LemPos1} and the fact that $F_{\left(  a,b\right)
}\left(  0\right)  =F_{\left(  a,b\right)  }^{\prime}\left(  0\right)  =0$ it
suffices to show that for $C=C_{a,b}$
\[
G:=D_{b}D_{0}F_{\left(  a,b\right)  }=2C\Phi_{\left(  a-1,1-a\right)  }%
-\Phi_{\left(  a,1,-a,-1\right)  }^{\prime\prime}+b\Phi_{\left(
a,1,-a,-1\right)  }^{\prime}%
\]
is strictly positive for $t>0.$ If $a^{2}\neq1$ and $a\neq0$ we know that
\begin{align*}
\Phi_{\left(  a,-a,1,-1\right)  }\left(  t\right)   &  =\frac{1}{1-a^{2}%
}\left(  \sinh t-\frac{1}{a}\sinh at\right) \\
\Phi_{\left(  a,-a,1,-1\right)  }^{\prime}\left(  t\right)   &  =\frac
{1}{1-a^{2}}\left(  \cosh t-\cosh at\right) \\
\Phi_{\left(  a,-a,1,-1\right)  }^{\prime\prime}\left(  t\right)   &
=\frac{1}{1-a^{2}}\left(  \sinh t-a\sinh at\right) \\
\Phi_{\left(  a-1,1-a\right)  }\left(  t\right)   &  =\frac{\sinh\left(
1-a\right)  t}{1-a}=\frac{1}{2}\frac{e^{t}e^{-at}-e^{-t}e^{at}}{\left(
1-a\right)  }.
\end{align*}
Hence
\[
\left(  1-a\right)  G\left(  t\right)  =2C\sinh\left(  1-a\right)
t-\frac{\sinh t-a\sinh at}{1+a}+b\frac{\cosh t-\cosh at}{1+a}.
\]
Let us put $v=e^{-at}$ for $t\geq0.$ Then $v\geq1$ since $a<0$ and
$v^{\frac{1}{-a}}=\left(  e^{-at}\right)  ^{\frac{1}{-a}}=e^{t}.$ It follows
that
\begin{align*}
&  2\left(  1-a\right)  G\left(  t\right) \\
&  =2C\left(  v^{1-\frac{1}{a}}-v^{-1+\frac{1}{a}}\right)  -\frac{\left(
v^{\frac{1}{-a}}-v^{\frac{1}{a}}\right)  -a\left(  v^{-1}-v\right)  }%
{1+a}+b\frac{v^{\frac{1}{-a}}+v^{\frac{1}{a}}-v^{-1}-v}{1+a}\\
&  =2C\left(  v^{1-\frac{1}{a}}-v^{-1+\frac{1}{a}}\right)  +\frac{b-1}%
{a+1}v^{\frac{1}{-a}}+\frac{b+1}{a+1}v^{\frac{1}{a}}-\frac{b-a}{a+1}%
v^{-1}-\frac{b+a}{a+1}v.
\end{align*}
Multiply this expression by $v^{1-\frac{1}{a}}$. Then it suffices to show that
for all $v>1$
\[
F\left(  v\right)  :=2Cv^{2-\frac{2}{a}}-2C+\frac{b-1}{a+1}v^{1-\frac{2}{a}%
}+\frac{b+1}{a+1}v-\frac{b-a}{a+1}v^{-\frac{1}{\alpha}}-\frac{b+a}%
{a+1}v^{2-\frac{1}{\alpha}}%
\]
is strictly positive. Note that $F\left(  1\right)  =0.$ By Lemma
\ref{LemPos1} it suffices to show that
\begin{align*}
F^{\prime}\left(  v\right)   &  =2C\left(  2-\frac{2}{a}\right)  v^{1-\frac
{2}{a}}+\frac{\left(  b-1\right)  \left(  1-\frac{2}{a}\right)  v^{-\frac
{2}{a}}}{a+1}+\frac{b+1}{a+1}+\frac{\left(  b-a\right)  v^{-1-\frac{1}{a}}%
}{a\left(  a+1\right)  }\\
&  -\frac{\left(  b+a\right)  \left(  2-\frac{1}{a}\right)  v^{1-\frac{1}{a}}%
}{a+1}%
\end{align*}
is strictly positive. Note that
\[
F^{\prime}\left(  1\right)  =\frac{2}{a}\left(  2C-1\right)  \left(
a-1\right)  \geq0
\]
for any $a<0$ since $C\geq\frac{1}{2}.$ Again by Lemma \ref{LemPos1}, it
suffices to show that
\begin{align*}
F^{\prime\prime}\left(  v\right)   &  =2C\left(  2-\frac{2}{a}\right)  \left(
1-\frac{2}{a}\right)  v^{-\frac{2}{a}}-\frac{\left(  b-1\right)  }{a+1}%
\frac{2}{a}\left(  1-\frac{2}{a}\right)  v^{-1-\frac{2}{a}}\\
&  +\frac{b-a}{a+1}\left(  -1-\frac{1}{a}\right)  \frac{1}{a}v^{-2-\frac{1}%
{a}}-\frac{b+a}{a+1}\left(  1-\frac{1}{a}\right)  \left(  2-\frac{1}%
{a}\right)  v^{-\frac{1}{a}}%
\end{align*}
is strictly positive. Multiply the expression with $v^{2+\frac{1}{a}}$ and
define
\begin{align*}
\widetilde{F}\left(  v\right)   &  =v^{2+\frac{1}{a}}F^{\prime\prime}\left(
v\right)  =2C\left(  2-\frac{2}{a}\right)  \left(  1-\frac{2}{a}\right)
v^{2-\frac{1}{a}}-\frac{\left(  b-1\right)  }{a+1}\frac{2}{a}\left(
1-\frac{2}{a}\right)  v^{1-\frac{1}{a}}\\
&  +\frac{b-a}{a+1}\left(  -1-\frac{1}{a}\right)  \frac{1}{a}-\frac{b+a}%
{a+1}\left(  1-\frac{1}{a}\right)  \left(  2-\frac{1}{a}\right)  v^{2}.
\end{align*}
Note that
\[
\widetilde{F}\left(  1\right)  =-\frac{2}{a^{2}}\left(  a-1\right)  \left(
a+b+4C-2aC-2\right)  .
\]
Since $a<0$ we infer that $\widetilde{F}\left(  1\right)  \geq0$ when
$a+b+4C_{a}-2aC_{a}-2\geq0.$ This is true since by our assumption
\[
C\geq\frac{2-a-b}{2\left(  2-a\right)  }.
\]
By Lemma \ref{LemPos1} it suffices to show that $\widetilde{F}^{\prime}\left(
v\right)  $ is positive for $v>1$ where
\begin{align*}
\widetilde{F}^{\prime}\left(  v\right)   &  =4C\left(  1-\frac{1}{a}\right)
\left(  1-\frac{2}{a}\right)  \left(  2-\frac{1}{a}\right)  v^{1-\frac{1}{a}%
}-\frac{b-1}{a+1}\frac{2}{a}\left(  1-\frac{2}{a}\right)  \left(  1-\frac
{1}{a}\right)  v^{-\frac{1}{a}}\\
&  -\frac{b+a}{a+1}\left(  1-\frac{1}{a}\right)  \left(  2-\frac{1}{a}\right)
2v.
\end{align*}
Divide by $v\geq1$ and $1-\frac{1}{a}\geq0,$ then it suffices to show that
\begin{align*}
H\left(  v\right)   &  =\frac{\widetilde{F}^{\prime}\left(  v\right)
}{v\left(  1-\frac{1}{a}\right)  }\\
&  =4C\left(  1-\frac{2}{a}\right)  \left(  2-\frac{1}{a}\right)  v^{-\frac
{1}{a}}-\frac{b-1}{a+1}\frac{2}{a}\left(  1-\frac{2}{a}\right)  v^{-1-\frac
{1}{a}}-\frac{b+a}{a+1}\left(  2-\frac{1}{a}\right)  2
\end{align*}
is strictly positive for $v>1.$ A computation shows that
\[
\frac{a^{2}}{2}H\left(  1\right)  =2C\left(  1-2a\right)  \left(  2-a\right)
+3a+2b-2ab-2a^{2}-2.
\]
We see that $H\left(  1\right)  \geq0$ if we assume the inequality
\begin{equation}
C\geq\frac{2a^{2}+\left(  2a-1\right)  b+2-3a}{2\left(  1-2a\right)  \left(
2-a\right)  }. \label{eqCondC2}%
\end{equation}
By Lemma \ref{LemPos1} it suffices to show $H^{\prime}$ is strictly positive
for $v>1.$ This is seen from the following argument:
\[
H^{\prime}\left(  v\right)  =4C\left(  1-\frac{2}{a}\right)  \left(
2-\frac{1}{a}\right)  \left(  -\frac{1}{a}\right)  v^{-1-\frac{1}{a}}%
-\frac{b-1}{a+1}\frac{2}{a}\left(  1-\frac{2}{a}\right)  \left(  -1-\frac
{1}{a}\right)  v^{-2-\frac{1}{a}}%
\]
is positive if for all $v>1$
\[
Cv>\frac{b-1}{a+1}\frac{\frac{2}{a}\left(  1-\frac{2}{a}\right)  \left(
-1-\frac{1}{a}\right)  }{4\left(  1-\frac{2}{a}\right)  \left(  2-\frac{1}%
{a}\right)  \left(  -\frac{1}{a}\right)  }=\frac{b-1}{4a-2}=\frac{1-b}{2-4a}.
\]

\end{proof}

\end{document}